\newcommand{\be}{\begin{eqnarray}}
\newcommand{\ee}{\end{eqnarray}}
\newcommand{\beq}{\begin{equation}}
\newcommand{\eeq}{\end{equation}}
\newcommand{\beqn}{\begin{equation*}}
\newcommand{\eeqn}{\end{equation*}}
\newcommand{\round}[1]{\lfloor#1\rfloor}
\DeclareMathOperator{\tr}{tr}
\newtheorem{thm}{Theorem}[section]
\newtheorem{prop}[thm]{Proposition}
\newtheorem{lem}[thm]{Lemma}
\newtheorem{defn}[thm]{Definition}
\newtheorem{remark}[thm]{Remark}
\newcommand\cA{{\mathcal A}}
\newcommand\cB{{\mathcal B}}
\newcommand\cC{{\mathcal C}}
\newcommand\cE{{\mathcal E}}
\newcommand\cF{{\mathcal F}}
\newcommand\cG{{\mathcal G}}
\newcommand\cH{{\mathcal H}}
\newcommand\cI{{\mathcal I}}
\newcommand\cL{{\mathcal L}}
\newcommand\cN{{\mathcal N}}
\newcommand\cT{{\mathcal T}}
\newcommand\bE{{\mathbb E}}
\newcommand\bN{{\mathbb N}}
\newcommand\bP{{\mathbb P}}
\newcommand\bR{{\mathbb R}}
\newcommand\bZ{{\mathbb Z}}
\newcommand\rd{{\mathrm d}}
\newcommand{\ve}{\varepsilon}
\def\bfT{\mathbf{T}}
\begin{document}

\title[Sunklodas' approach to normal approximation for dynamical systems]{Sunklodas' approach to normal approximation for time-dependent dynamical systems}


\author[Juho Lepp\"anen]{Juho Lepp\"anen}
\address[Juho Lepp\"anen]{
LPSM, Laboratoire de Probabilit\'es, Statistique et Mod\'elisation, Sorbonne Universit\'e, 4 Place Jussieu, 75252 Paris, France}
\email{leppanen@lpsm.paris}

\author[Mikko Stenlund]{Mikko Stenlund}
\address[Mikko Stenlund]{
Department of Mathematics and Statistics, P.O.\ Box 68, Fin-00014 University of Helsinki, Finland.}
\email{mikko.stenlund@helsinki.fi}
\urladdr{http://www.helsinki.fi/~stenlund/}

\keywords{Stein's method, multivariate normal approximation, time-dependent dynamical system, intermittency}

\thanks{2010 {\it Mathematics Subject Classification.} 60F05; 37A05, 37A50, 37C60} 
 





\begin{abstract} We consider time-dependent dynamical systems arising as sequential compositions of self-maps of a probability space. We establish conditions under which the Birkhoff sums for multivariate observations, given a centering and a general normalizing sequence $b(N)$ of invertible square matrices, are approximated by a normal distribution with respect to a metric of regular test functions. Depending on the metric and the normalizing sequence $b(N)$, the conditions imply that the error in the approximation decays either at the rate $O(N^{-1/2})$ or the rate $O(N^{-1/2} \log N)$, under the additional assumption that $\Vert b(N)^{-1} \Vert \lesssim N^{-1/2}$. The error comes with a multiplicative constant whose exact value can be computed directly from the conditions. The proof is based on an observation due to Sunklodas regarding Stein's method of normal approximation. We give applications to one-dimensional random piecewise expanding maps and to sequential, random, and quasistatic intermittent systems.
\end{abstract}

\maketitle


\subsection*{Acknowledgements} JL was supported by DOMAST (University of Helsinki) and by the European Research Council (ERC) under the European Union's Horizon 2020 research and innovation programme (grant agreement No 787304). He would like to thank Viviane Baladi for helpful discussions. MS was supported by Emil Aaltosen S\"a\"ati\"o, and the Jane and Aatos Erkko Foundation


\section{Introduction}\label{sec:intro}

In this note we revisit the topic of statistical limit laws by Stein's method for dynamical systems, studied previously in \cite{HLS_2016, Hella_2018, HL_2018, King_1997, Gordin_1990, Denker_etal_2004, Psiloyenis_2008, GordinDenker_2012, Haydn_2013, HaydnYang_2016}. We consider discrete time-dependent dynamical systems described by sequential compositions $\cT_n = T_n \circ \cdots \circ T_1$, where each $T_i : X \to X$ is a transformation of a probability space $(X,\cB,\mu)$. The measure $\mu$ is not assumed to be invariant under any of the maps $T_i$. Given a bounded obsevable $f : X \to \bR^d$ and a sequence $b = b(N) \in \bR^{d \times d}$ of invertible matrices, we are interested in approximating the law of the sums
\begin{align*}
W = W(N) = \sum_{n=1}^{N-1} b^{-1} ( f \circ \cT_n - \mu(f \circ \cT_n) )
\end{align*}
by a multivariate normal distribution. More precisely, we want to identify conditions that cover a wide range of chaotic time-dependent systems and imply a good upper bound on
\begin{align}\label{eq:intro_aim}
\sup_{h \in \cH} | \mu[ h(W) - \Phi_{\Sigma}(h)] |,
\end{align}
where $\cH$ is a class of regular test functions $h : \bR^d \to \bR$, and $\Phi_{\Sigma}(h)$ denotes the expectation of $h$ with respect to the multivariate normal distribution $\cN(0, \Sigma)$ with covariance matrix $\Sigma = \text{Cov}_\mu(W) = \mu(W \otimes W)$.

Since its introduction in \cite{Stein_1972}, Stein's method has seen extensive development in the literature of probability theory. In the present context of dynamical systems, the simple basic idea of the method can be described as follows. If for each test function $h \in \cH$ the solution $A : \bR^d \to \bR$ to the differential equation (called a Stein equation)
\begin{align}\label{eq:stein_multi}
\tr  \Sigma D^2A(w) - w^T  \nabla A(w) = h(w) - \Phi_{\Sigma}(h) \hspace{0.5cm} (w \in \bR^d)
\end{align}
lies in another class of functions $\cA$, then it follows that
\begin{align}\label{eq:stein_ineq}
\eqref{eq:intro_aim} \le \sup_{A \in \cA} |\mu[ \tr  \Sigma D^2A(W) - W^T  \nabla A(W) ]|.
\end{align}
In this way the original problem of approximating the law of $W$ by $\cN(0, \Sigma)$ is reduced to bounding the right hand side of  \eqref{eq:stein_ineq}, which interestingly only depends on the law of $W$ and the class $\cA$. It  was observed in \cite{HLS_2016,Hella_2018} that, when $b(N) = \sqrt{N} I_{d\times d}$, Taylor expanding $\nabla A(W)$ about the punctured sums 
\begin{align*}
W^{n,K} = \sum_{0 \le i \le N -1 \,:\, | i - n| > K} b^{-1} ( f \circ \cT_i - \mu(f \circ \cT_i) )
\end{align*}
with a suitably chosen $K = K(N) \gg 1$ leads to certain correlation-decay conditions for an upper bound on $|\mu[ \tr  \Sigma D^2A(W) - W^T\nabla A(W) ]|$. Such an approach calls for bounds on partial derivatives of $A$, which are known to follow from bounds on partial derivatives of $h$. In \cite{HLS_2016,Hella_2018}, $\cH$ was taken to be the class of three times differentiable functions with bounded derivatives in the case of a general $d > 1$, and the class of Lipschitz continuous functions in the case $d=1$. 

The approach described above was applied in \cite{HLS_2016}  to stationary Sinai billiards and in \cite{Hella_2018} to time-dependent smooth uniformly expanding circle maps. Both systems are (the latter in a certain sense) exponentially mixing, which is  essentially the reason why replacing $W$ with $W^{n,K}$ in the application of Stein's method causes only a small error. Indeed, upper bounds of order $O( N^{-1/2} \log N)$ on \eqref{eq:intro_aim} for sufficiently regular observables could be obtained this way. While such a ``fixed gap'' approach works also for polynomially mixing systems, it yields a larger error depending on the rate of mixing. This can be seen from the results of \cite{HL_2018}, where time-dependent systems in the spirit of \cite{aimino2015,Nicol_2018} described by sequential compositions $T_{\alpha_n} \circ \cdots \circ T_{\alpha_1}$ of polynomially mixing intermittent maps $T_{\alpha_n} : [0,1] \to [0,1]$ with parameters $0 \le \alpha_n \le \beta_* < 1/3$ were considered. Under the condition that $\Sigma = \text{Cov}_\mu(W)$ is  positive definite,  an upper bound of order $O(N^{\beta_* - 1/2} (\log N)^{1/\beta_*})$ was obtained for Lipschitz continuous observables. The result was used to establish central limit theorems for quasistatic and random compositions of intermittent maps. 

The purpose of the present note is to describe an adaptation of Stein's method that is more suitable than those of \cite{HLS_2016,Hella_2018} for normal approximation of polynomially mixing systems, and investigate some of its implications. The starting point is a decomposition of $\mu[ \tr  \Sigma D^2A(W) - W^T\nabla A(W) ]$ due to Sunklodas \cite{sunklodas2007}, which allows to identify correlation-decay conditions that imply a rate of decay for 
$\eqref{eq:intro_aim}$ depending on the ``growth of $b(N)$''. In the case of a general $b(N)$ such that $\Vert b(N)^{-1} \Vert \lesssim N^{-1/2}$, the conditions yield the rate $O(N^{-1/2})$ for a class of smooth test functions $\cH$, and in the special self-norming case $b(N) = [ \text{Cov}_\mu( \sum_{n < N} ( f \circ \cT_n - \mu(f \circ \cT_n) ) ) ]^{1/2}$  the rate $O(N^{-1/2} \log N)$ for Lipschitz continuous test functions. A key ingredient in the proof of the latter estimate is a recent result due to Gallou\"et--Mijoule--Swan \cite{Gallouet_2018} concerning the regularity of solutions to Stein equation. As applications we establish rates of convergence in the central limit theorem for the random piecewise expanding model studied  by Dragi\v{c}evi\'{c} et al. in \cite{Froyland_2018} and for sequential, random, and quasistatic intermittent systems. The results for intermittent systems notably improve those of \cite{HL_2018}.

Statistical properties of time-dependent dynamical systems have been studied in several previous works including \cite{Su_almost_2019,rousseau2016, Freitas_2018, kawan2015,tanzi2016,Kawan_2016, SYZ_2013}. Central limit theorems were shown by Bakhtin \cite{bakhtin1994,bakhtin_1994_2}, Conze--Raugi \cite{conze2007}, and more recently by N\'andori--Sz\'asz--Varj\'u \cite{nandori2012} and Nicol-- T\"{o}r\"{o}k--Vaienti 
\cite{Nicol_2018}. Heinrich \cite{Heinrich_1996} showed a Berry-Esseen type upper bound for sequences of uniformly expanding interval maps admitting a Markov partition. Haydn--Nicol--T\"or\"ok--Vaienti \cite{haydn2017} established almost sure invariance principles (ASIP) for piecewise-expanding and other related models, also in higher dimension. ASIPs were obtained also by Castro--Rodrigues--Varandas \cite{Castro_2017} for convergent sequences of Anosov diffeomorphisms and expanding maps on compact Riemannian manifolds. Recently Su \cite{Su_vector_2019} proved a vector valued ASIP for a general class of polynomially mixing time-dependent systems. Among its many implications is a self-norming CLT for the sequential intermittent system with $\beta_* < 1/2$, under a (polynomial) variance growth condition. Finally, Hafouta \cite{Hafouta_2019} showed several limit theorems, including a Berry-Esseen theorem and a local limit theorem, for sequential compositions of maps belonging to a certain class of distance expanding maps of a compact metric space.

\subsection*{Notation.} For a function \(A: \, \bR^d \to \bR\), we write \(D^kA\) for the $k$th derivative of \(A\), and also denote \(\nabla A = D^1 A\). We define
\begin{align*}
\Vert D^k A \Vert_{\infty} = \max \{ \|\partial_1^{t_1}\cdots\partial_d^{t_d}A_\alpha\|_{\infty}  :  t_1+\cdots+t_d = k,\, 1\le \alpha\le d'  \}.
\end{align*} 

The spectral norm of a matrix $A \in \bR^{d \times d}$ is denoted by 
\begin{align*}
\Vert A \Vert_s = \sup \{ \Vert Ax \Vert : \Vert x \Vert = 1   \},
\end{align*}
where $\Vert \cdot \Vert$ is the Euclidean norm of $\bR^d$. We use $B_d(x,r)$ to denote the open ball in $\bR^d$ with center $x$ and radius $r > 0$.

Given a measure space $(X,\cB,\mu)$ and a $\mu$-integrable function $f : X \to \bR^d$ we set $\mu(f) = \int f \, d \mu$. The components of $f$ are denoted by $f_{\alpha}$, where $\alpha \in \{1,\ldots,d \}$. The Lebesgue measure is denoted by $m$.

For two vectors $v,w \in \bR^d$ we set $v \otimes w = [v_{\alpha}w_{\beta} ]_{\alpha,\beta}$.

We denote by $C$ a generic positive constant whose value might change from one line to the next. We use $C(a_1,\ldots,a_n)$ to denote a  positive constant that depends only on the parameters $a_1,\ldots,a_n$.

\subsection*{Structure of the paper} In Section \ref{sec:main} we present our main results concerning normal approximation of abstract discrete time-dependent dynamical systems. Sections \ref{sec:pw} and \ref{sec:int} contain applications to one-dimensional dynamics. The model of Section \ref{sec:pw} is a random dynamical system of piecewise smooth uniformly expanding maps, while in Section \ref{sec:int} we consider sequential, quasistatic, and random intermittent systems. Finally, in Section \ref{sec:proof} we prove the main results.

\section{Main results}\label{sec:main}

Consider a sequence $(T_n)_{n \ge 1}$ of measurable maps $T_n : X \to X$ of a probability space $(X,\cB,\mu)$. For each $i \ge 0$ let $g^i:X\to\bR^d$ be a bounded measurable function and define
\beqn
f^i = g^i\circ T_i\circ\cdots\circ T_1 \quad\text{and}\quad \bar f^i = f^i - \mu(f^i).
\eeqn

Given $N \ge 1$ and an invertible matrix \(b = b(N) \in \bR^{d \times d}\), we write 
\beqn
W = W(N) = \sum_{i=0}^{N-1} b^{-1} \bar{f}^i.
\eeqn
Given also $n, k \ge 0$, we write
\beqn
\bar{f}^{n,k} = \sum_{0\le i<N : |i - n| = k} \bar{f}^i.
\eeqn

The covariance matrix of $W$ is denoted by 
\beqn
\Sigma = \text{Cov}_\mu(W) = \mu(W \otimes W). 
\eeqn

\subsection{General normalization} First we consider a general invertible $b = b(N)$ and give conditions that imply an upper bound on the distance between the law of $W$ and the normal distribution $\cN(0,\Sigma)$ with respect to a smooth metric. 

Suppose that $\Vert g^i \Vert_{\infty} = \sup_{x \in X} \Vert g^i(x) \Vert \le M$ for all $0 \le i \le N-1$. Then, given a smooth test  function $h : \bR^d \to \bR$ and $(s,t,z) \in [0,1]^2 \times \bR^d$ we define the matrix-valued function $G_{h} = G_h^{(s,t,z)} :  \bR^d \times B_d(0, 4M + 1) \to \bR^{d \times d} $ by
\begin{align*}
G_h(x,y) = b^{-1} \left[ D^2h(s b^{-1} (  x + t y ) + z) - D^2h(sb^{-1}x + z) \right] b^{-1},
\end{align*}
where $D^2h(x) = [\partial_{\alpha} \partial_\beta h(x) ]_{\alpha,\beta}$. For a differentiable function $F : \bR^d \times B_d(0, 4M + 1) \to \bR^{d \times d}$ we set 
\begin{align*}
\Vert F  \Vert_\infty =  \sup \{ \Vert F(x,y) \Vert_s \, : \, (x,y)  \in \bR^d \times B_d(0,4M+1)  \}  
\end{align*}
and
\begin{align*}
\Vert \nabla F \Vert_\infty =  \max_{1 \le i \le 2d} \sup \{ \Vert \partial_i F(x,y) \Vert_s \, : \, (x,y)  \in \bR^d \times B_d(0,4M+1) \},
\end{align*}
where $\partial_i F(x, y) = [ \partial_i F_{\alpha,\beta}(x, y) ]_{\alpha,\beta}$.

Here is the first main result:

\begin{thm}\label{thm:main_multi_general} Fix $N \ge 1$ and let \(h: \, \bR^d \to \bR\) be three times differentiable with \(\Vert D^p h \Vert_{\infty} < \infty\) for \(1 \le p \le 3\). Suppose $M = \max_{i < N} \Vert g^i \Vert_{\infty} < \infty$, and that there exist a function \(\rho : \bN \to \bR_{+}\) with $\lim_{n \to \infty} \rho(n) = 0$ and constants $C_i > 0$, $1 \le i \le 3$, such that the following conditions hold for all \(0 \le n,m \le N-1\):

\begin{itemize}
\item[(A1)]{For all $\alpha, \beta \in \{1, \ldots , d \}$,
\begin{align*}
| \mu( \bar{f}^n_{\alpha} \bar{f}^{m}_{\beta} ) | \le C_1 \rho(|n-m|).
\end{align*}}
\item[(A2)]{Whenever \( (s,t,z) \in [0,1]^2 \times \bR^d \) and $m \le k \le N-1$,
\begin{align*}
\left| \mu \left[  ( \bar{f}^{n})^T   G_h \left( \sum_{ 0 \le i \le N-1 : \, |i - n| > k} \bar{f}^i, \bar{f}^{n,k}   \right)  \bar{f}^{n,m}   \right] \right| \le C_2 ( \Vert G_h \Vert_\infty + \Vert \nabla G_h \Vert_\infty ) \rho (m)  .
\end{align*}
} 
\item[(A3)]{Whenever $(s, t ,z) \in [0,1]^2 \times \bR^d$ and  \(2m \le k \le N -1\),
\begin{align*}
\left| \mu \left[  ( \bar{f}^{n})^T   \overline{G_h \left( \sum_{0 \le i \le N-1 : \, |i - n| > k} \bar{f}^i, \bar{f}^{n,k}   \right)}  \bar{f}^{n,m}   \right] \right|  \le C_3 ( \Vert G_h \Vert_\infty + \Vert \nabla G_h \Vert_\infty )  \rho(k -m).
\end{align*}
} 
\item[(A4)]{The matrix $\Sigma = \mu(W \otimes W)$ is positive definite.}\smallskip
\end{itemize}
Then
\begin{align}\label{eq:main_bound}
|\mu[h(W)] - \Phi_{\Sigma}(h)| \le C_* \Vert D^3 h \Vert_{\infty} N  \Vert b^{-1} \Vert^{3}_s  \sum_{m=1}^{N-1} m\rho(m), 
\end{align}
where 
\begin{align*}
C_* =  M^3 d^4 10 ( C_1 + C_2 + C_3 + 4  ).
\end{align*}
Here \(\Phi_{\Sigma}(h)\) denotes the expectation of \(h\) with respect to $\cN(0,\Sigma)$.
\end{thm}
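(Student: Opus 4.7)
The plan is to apply Stein's method via the multivariate Stein equation~\eqref{eq:stein_multi}. Let $A$ denote its solution; then
\[
\mu[h(W)] - \Phi_\Sigma(h) = \mu\bigl[\tr \Sigma\,D^2 A(W) - W^T \nabla A(W)\bigr],
\]
so the task is to bound the right-hand side. Because the hypotheses A1--A3 involve derivatives of $h$ (through $G_h$) rather than of $A$, the first step---and the essence of Sunklodas' observation---is to eliminate $A$ by an integral representation. Using the Mehler formula for the Ornstein--Uhlenbeck semigroup with invariant measure $\cN(0,\Sigma)$, I would write $A(w) = -\int_0^\infty [P_t h(w) - \Phi_\Sigma(h)]\,dt$ with $P_t h(w) = \mathrm{E}[h(e^{-t}w + \sqrt{1-e^{-2t}} Z)]$ for $Z \sim \cN(0,\Sigma)$, and then change variables to $s = e^{-t}$. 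This yields $\nabla A(w) = -\int_0^1 \mathrm{E}[\nabla h(sw + \sqrt{1-s^2} Z)]\,ds$ and $D^2 A(w) = -\int_0^1 s\,\mathrm{E}[D^2 h(sw + \sqrt{1-s^2} Z)]\,ds$, and substituting back, the problem reduces to controlling, uniformly over $s \in [0,1]$ and the Gaussian shift $z$,
\[
R(s,z) := \mu\bigl[W^T \nabla h(sW+z) - s\,\tr \Sigma\,D^2 h(sW+z)\bigr].
\]

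The core is to analyse $R(s,z)$ by a two-stage Sunklodas-style decomposition. Write $W = \sum_n \phi_n$ with $\phi_n = b^{-1}\bar f^n$, so that $W^T \nabla h(sW+z) = \sum_n \phi_n^T \nabla h(sW+z)$. For each $n$ I would first Taylor expand $\nabla h(sW+z)$ about $sW^{n,K}+z$, where $W^{n,K} = \sum_{|i-n|>K} b^{-1}\bar f^i$ is the far punctured sum: the zeroth-order term is controlled by A1 (the only covariance between $\bar f^n$ and the argument of $\nabla h$ lives at distance $>K$), while the Taylor remainder is a $D^2 h$ convolved with $W - W^{n,K}$. Second, decompose $W - W^{n,K} = \phi_n + \sum_{k=1}^K b^{-1}\bar f^{n,k}$ and telescope the argument of $D^2 h$ shell by shell in $k$: the increment produced by inserting the $k$-th shell is precisely the matrix $G_h(S^{n,k},\bar f^{n,k})$ of A2 and A3, with $S^{n,k} = \sum_{|i-n|>k}\bar f^i$. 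The diagonal self-correlation pieces $\phi_n^T D^2 h(\cdot)\phi_n$ should, after invoking A1 to swap function arguments with covariances, recombine with the $s\,\tr \Sigma\,D^2 h$ term to produce the desired cancellation. The off-diagonal residuals $\phi_n^T D^2 h(\cdot)\,b^{-1}\bar f^{n,m}$ get expressed as sums over $k \ge m$ of terms $\mu[(\bar f^n)^T G_h(S^{n,k},\bar f^{n,k})\bar f^{n,m}]$ handled by A2 with gain $\rho(m)$; after centering $G_h$ one obtains the finer family $\mu[(\bar f^n)^T \overline{G_h}\,\bar f^{n,m}]$, to which A3 applies in the regime $2m \le k$ with the stronger gain $\rho(k-m)$.

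To finish, I use the mean value theorem on $D^2 h$ and its derivatives to establish the routine bounds $\Vert G_h \Vert_\infty,\,\Vert \nabla G_h \Vert_\infty \le C(d,M)\,\Vert D^3 h \Vert_\infty\,\Vert b^{-1} \Vert_s^3$. The outer sum over $n \in \{0,\dots,N-1\}$ produces the factor $N$, while the double summation over shell radii $(k,m)$ with the A2/A3 restrictions yields the combinatorial factor $\sum_{m=1}^{N-1} m\,\rho(m)$ once one partitions into the regimes $m \le k \le 2m$ (treated by A2, contributing $\sum_m m\rho(m)$) and $2m \le k < N$ (treated by A3, giving $\sum_m \sum_{k \ge 2m} \rho(k-m)$, again bounded by $\sum_m m\rho(m)$ after reindexing) and chooses $K$ of order $N$. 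Integration in $s \in [0,1]$ produces only numerical constants, which combine with the $M^3 d^4$ factor and $C_1,C_2,C_3$ into the displayed $C_\ast$. The main obstacle I anticipate is precisely the bookkeeping of the cancellation between the diagonal self-correlation terms arising from the second stage of the telescoping and the $s\,\tr \Sigma\,D^2 h$ term: these are produced at different orders of the expansion and must align exactly, so that every residual slots cleanly into the template of A1, A2, or A3. A secondary subtlety is tracking the dependence on the OU time $s$ and the shift $z$ inside $G_h$ so that the estimates remain uniform when eventually integrated.
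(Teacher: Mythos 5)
Your plan is essentially the paper's proof with the two main steps interchanged. The paper first applies Sunklodas' decomposition (Proposition \ref{prop:sunklodas_multi}) to the Stein solution $A$ of \eqref{eq:stein_solution}, and only afterwards inserts the Ornstein--Uhlenbeck integral representation of $D^2A$ (plus Fubini) to convert the increments $\delta^{n,k}(u)$ and $\overline{\delta^{n,k}}$ into exactly the $G_h$-correlations of (A2)--(A3), with (A1) handling the covariance-times-mean terms $\mu[(Y^{n})^T\mu(\delta^{n,k})Y^{n,m}]$ and with the elementary bounds $\Vert G_h\Vert_\infty,\ \Vert\nabla G_h\Vert_\infty\lesssim d^2 M\Vert D^3h\Vert_\infty\Vert b^{-1}\Vert_s^3$ as in \eqref{eq:G_sup}--\eqref{eq:G_partial}. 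You instead pull the semigroup integral out front and do the puncturing/telescoping directly on $\nabla h$ and $D^2h$; since (A2)--(A3) are stated uniformly in $(s,t,z)$, this is an equivalent bookkeeping, and your split into the regimes $m\le k\le 2m$ (A2) and $k>2m$ (A3), together with the reindexing giving $N\sum_m m\rho(m)$, matches the treatment of $E_1$--$E_7$ in the paper.

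One point in your sketch needs correcting: the claim that the zeroth-order term $\mu[(\bar f^{n})^T\nabla h(sW^{n,K}+z)]$ is ``controlled by A1''. Condition (A1) bounds only the covariances $\mu(\bar f^{n}_\alpha\bar f^{m}_\beta)$ of the observables themselves; it gives no control on the correlation of $\bar f^{n}$ with a nonlinear function of the far sum, and neither (A2) nor (A3) covers such a term (both carry an extra factor $\bar f^{n,m}$ and the specific difference-of-$D^2h$ structure of $G_h$). Under the stated hypotheses the only way to dispose of this term is to puncture all the way, $K=N-1$, so that $W^{n,K}=0$ and the term vanishes identically because $\mu(\bar f^{n})=0$; this is precisely what the paper's exact telescoping $\nabla A(W)-\nabla A(0)=\sum_{m}[\nabla A(W^{n,m-1})-\nabla A(W^{n,m})]$ achieves, whereas ``$K$ of order $N$'' (say $K=N/2$) would leave an uncontrolled remainder. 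With that repair the proposal goes through and reproduces the stated bound.
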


\begin{remark} Theorem \ref{thm:main_multi_general}, as well as Theorems \ref{thm:main_1d}  and \ref{thm:main_self} given below, continue to hold if $f^i$ are replaced with general random vectors. 
\end{remark}

We postpone proving Theorem \ref{thm:main_multi_general} and other results in this section until Section \ref{sec:proof}. Due to the smooth metric, the constant $C_*$ in the upper bound \eqref{eq:main_bound} is independent of the covariance matrix $\Sigma$. Note that under the additional assumptions $\sum_{m=1}^{\infty} m \rho (m) < \infty$ and $ \Vert b^{-1} \Vert_s \lesssim N^{-1/2}$ we obtain $|\mu[h(W)] - \Phi_{\Sigma}(h)| = O(N^{-1/2})$ as $N \to \infty$, which is the optimal rate in this generality. Conditions (A1)-(A3) are designed for time-dependent systems with sufficiently good (polynomial) mixing properties.  Condition (A1) requires the decay of non-stationary correlations at the rate $\rho$. Condition (A2) requires that, for large $m$, the random vectors
\beqn
\bar{f}^n \hspace{0.5cm} \text{and} \hspace{0.5cm}  G_h \left( \sum_{0 \le i \le N-1 : \, |i - n| > k}  \bar{f}^i, \bar{f}^{n,k} \right)\bar{f}^{n,m}  
\eeqn
are componentwise nearly uncorrelated. This is reasonable because the function on the right depends on $\bar{f}^i$ with $|i - n| \ge m$ only. The function $G_h$ is differentiable and its $C^1$ norm appears as a factor in the upper bound. Condition (A3) is similar in spirit to condition (A2), for it requires
\begin{align*}
G_h \left( \sum_{0 \le i \le N-1 : \, |i - n| > k} \bar{f}^i, \bar{f}^{n,k} \right)\hspace{0.5cm} \text{and} \hspace{0.5cm} \bar{f}^{n} \otimes \bar{f}^{n,m} 
\end{align*}
to be nearly componentwise uncorrelated, which is again reasonable when ~$k \gg m$.

Recall that the Wasserstein distance between two random vectors $Y_1$ and $Y_2$  is defined by
\beqn
d_\mathscr{W}(Y_1,Y_2) = \sup_{h\in\mathscr{W}}|\mu(h(Y_1)) - \mu(h(Y_2))|,
\eeqn
where
\beqn
\mathscr{W} = \{h:\bR^d\to\bR\,:\,|h(x) - h(y)| \le \Vert x-y \Vert \}
\eeqn
is the class of all $1$-Lipschitz functions. When $d=1$ we obtain a result similar to Theorem \ref{thm:main_multi_general} for the Wasserstein distance. The relaxed smoothness of $h$ comes with the expense that conditions (A2) and (A3) have to be verified for a whole class of regular functions.

For a function $G : \bR^d \to \bR$ we denote
\begin{align*}
\text{Lip}(G) =  \sup_{x \neq y} \frac{| G(x) - G(y) | }{\Vert x - y \Vert}.
\end{align*}

\begin{thm}\label{thm:main_1d}  Let $d = 1$ and fix $N \ge 1$. Take $b = \textnormal{Var}_\mu( \sum_{i < N} \bar{f}^i )^{1/2}$. Suppose that $M = \max_{i < N} \Vert g^i \Vert_{\infty} < \infty$, that $b > 0$, and that there exist constants $C_i > 0$, $1 \le i \le 3$, and a function \(\rho : \bN \to \bR_{+}\) with $\lim_{n \to \infty} \rho(n) = 0$ such that the following conditions hold for all \(0 \le n,m \le N-1\):

\begin{itemize}
\item[(B1)]{$| \mu( \bar{f}^n \bar{f}^{m} ) | \le C_1 \rho(|n-m|)$.
} \medskip
\item[(B2)]{Whenever $m \le k \le N-1$ and $G : \bR \times B_1(0,4M+1) \to \bR$ is a bounded Lipschitz continuous function,
\begin{align*}
\left| \mu \left[   \bar{f}^{n}   G \left( \sum_{0 \le i \le N-1 : \, |i - n| > k}  \bar{f}^i, \bar{f}^{n,k}   \right)  \bar{f}^{n,m}   \right] \right| \le C_2 ( \Vert G \Vert_\infty +\textnormal{Lip}(G) ) \rho (m)  .
\end{align*}
}
\item[(B3)]{Whenever $2m \le k \le N-1$ and $G : \bR \times B_1(0,4M+1) \to \bR$ is a bounded Lipschitz continuous function,
\begin{align*}
\left| \mu \left[   \bar{f}^{n}  \bar{f}^{n,m}    \overline{G \left( \sum_{0 \le i \le N-1 : \, |i - n| > k} \bar{f}^i, \bar{f}^{n,k}   \right)}   \right] \right|  \le C_3 ( \Vert G \Vert_\infty + \textnormal{Lip}(G))  \rho(k -m).
\end{align*}}
\end{itemize}
Then 
\begin{align}\label{eq:scalar_bound}
& d_\mathscr{W}(W,Z) \le C_* N b^{-3}  \sum_{m=1}^{N-1}m \rho(m),
\end{align}
where 
\begin{align*}
C_* = 96 M^3 \biggl(   C_1 + C_2  
+ C_3    +  1   \biggr),
\end{align*}
and $Z \sim \cN(0,1)$ is a random variable with standard normal distribution.
\end{thm}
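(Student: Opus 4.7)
The plan is to mimic the scheme of Theorem \ref{thm:main_multi_general} in dimension one, with Lipschitz regularity of the Stein solution replacing the bounded-$D^3h$ hypothesis. Since $b^2 = \textnormal{Var}_\mu(\sum_{i<N}\bar f^i)$, one has $\Sigma = \textnormal{Var}_\mu(W) = 1$, and the Stein equation reduces to $A''(w) - w A'(w) = h(w) - \Phi(h)$. Stein's characterization of the standard normal then gives
\beqn
d_\mathscr{W}(W,Z) \le \sup_{h \in \mathscr{W}} |\mu[A_h''(W) - W A_h'(W)]|,
\eeqn
where $A_h$ denotes the Stein solution associated with $h$, so that it suffices to bound the right-hand side uniformly in $h \in \mathscr{W}$.

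The crucial regularity input is the following: for 1-Lipschitz $h$ one has the classical bounds $\|A_h'\|_\infty, \|A_h''\|_\infty \lesssim 1$, and in addition, by the result of Gallou\"et--Mijoule--Swan \cite{Gallouet_2018}, $A_h''$ is Lipschitz with $\textnormal{Lip}(A_h'') \le C_0$ for an absolute constant $C_0$. This Lipschitz regularity of $A_h''$ plays the same role in the argument as boundedness of $D^3 h$ in Theorem \ref{thm:main_multi_general}.

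Following the Sunklodas decomposition, I would write $\mu[W A_h'(W)] = \sum_n b^{-1}\mu[\bar f^n A_h'(W)]$ and, for each $n$, Taylor expand $A_h'(W)$ to first order around the punctured sum $W^{n,k} := W - b^{-1}\sum_{j=0}^{k}\bar f^{n,j}$, writing the remainder through differences of $A_h''$. This produces expressions involving the auxiliary function
\beqn
G(x,y) = b^{-2}\bigl[A_h''(sb^{-1}(x+ty) + z) - A_h''(sb^{-1}x + z)\bigr], \quad (s,t,z) \in [0,1]^2 \times \bR,
\eeqn
which is bounded and Lipschitz on $\bR \times B_1(0,4M+1)$ with $\|G\|_\infty + \textnormal{Lip}(G) \lesssim M b^{-3}$. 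Since conditions (B2) and (B3) are phrased for \emph{arbitrary} bounded Lipschitz $G$, they apply to this specific $G$. Combining three contributions~---~the principal cancellation between $\mu[A_h''(W)]$ and the leading piece of $\mu[W A_h'(W)]$ via the identity $b^{-2}\sum_{n,m}\mu(\bar f^n \bar f^{n,m}) = 1$ together with (B1); the decoupling error controlled by (B2); and the centering error controlled by (B3) with the parameter choice $k = 2m$~---~and summing over $n$ and $m$ will yield the claimed bound $C_* N b^{-3}\sum_{m=1}^{N-1} m\rho(m)$.

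The main obstacle will be bookkeeping: tracking the constants from the Taylor expansion and from the bounds on $\|G\|_\infty$ and $\textnormal{Lip}(G)$, combined with the absolute bounds on $\|A_h'\|_\infty$, $\|A_h''\|_\infty$, and $\textnormal{Lip}(A_h'')$, so that they collect into the explicit form $C_* = 96 M^3 (C_1 + C_2 + C_3 + 1)$; in particular, verifying that the combinatorial factor $m$, coming from the summation over triples $(n,m,k)$ subject to $2m \le k$, enters correctly. Structurally, the only novelty compared to the proof of Theorem \ref{thm:main_multi_general} is the substitution of $A_h''$ (Lipschitz, by \cite{Gallouet_2018}) for $D^2h$ ($C^1$ under the bounded-$D^3h$ hypothesis); everything else runs in parallel.
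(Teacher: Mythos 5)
Your outline follows the same route as the paper: bound $d_{\mathscr{W}}(W,Z)$ through a univariate Stein equation, decompose the resulting expectation by the $d=1$ version of Sunklodas' decomposition (Proposition \ref{prop:sunklodas_multi}), rewrite the increments as a bounded Lipschitz function $G$ of the punctured sum and of $\bar f^{n,k}$, and feed this into (B1)--(B3), with the factor $m$ coming from the $k$-summations. The only structural difference is cosmetic: the paper works with the first-order form $A'(w)-wA(w)=h(w)-\Phi_1(h)$ of the Stein equation \eqref{eq:stein_univ}, so that the quantities $\delta^{n,m}(u)$ are increments of $A'$ and the only regularity needed is the classical bound $\Vert A''\Vert_\infty\le 2$ of Lemma \ref{lem:univ_stein_sol}; your $A$ is an antiderivative of the paper's, so the two formulations are equivalent. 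Also, the $(s,t,z)$-smearing in your $G$ is unnecessary here: the paper simply takes $G_u(x,y)=A'(b^{-1}x+b^{-1}uy)-A'(b^{-1}x)$, with $|G_u(x,y)|\le 2b^{-1}|y|$ and $\textnormal{Lip}(G_u)\le 4b^{-1}$, and applies (B2)--(B3) to its restriction to $\bR\times B_1(0,4M+1)$.

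The one step you must repair is the regularity input. You assert that $\textnormal{Lip}(A_h'')\le C_0$ ``by Gallou\"et--Mijoule--Swan''. The GMS estimate (Lemma \ref{lem:solution_self}) gives only the log-Lipschitz bound $\Vert D^2A(w)-D^2A(w')\Vert_s\le\Vert w-w'\Vert\,(C_\#+|\log\Vert w-w'\Vert|)$, and this logarithmic loss is exactly what produces the $\log N$ factor (and the $C_\#$-dependent constant) in Theorem \ref{thm:main_self}; if you used it literally, your argument would yield an extra logarithmic correction and would not give \eqref{eq:scalar_bound} with $C_*=96M^3(C_1+C_2+C_3+1)$. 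In $d=1$ the fact you need is nevertheless true, but it comes from the classical univariate Stein bounds rather than from GMS: since your $A_h'$ solves the first-order equation \eqref{eq:stein_univ}, Lemma \ref{lem:univ_stein_sol} applied to it gives $\textnormal{Lip}(A_h'')\le 2$ for $1$-Lipschitz $h$. With that substitution your plan coincides with the paper's proof; the remaining work is indeed only the bookkeeping of the terms $E_1,\dots,E_7$.
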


The following easy observation allows for normalizing constants other than 
\begin{align*}
b = \left[ \text{Var}_\mu \left( \sum_{i =0}^{N-1} \bar{f}^i \right) \right]^{1/2}.
\end{align*}
\begin{lem}\label{lem:1d_change_normal}
Suppose~\eqref{eq:scalar_bound} of Theorem~\ref{thm:main_1d} holds. Then, for any $c>0$,
\begin{align}\label{eq:1d_general_scale}
d_\mathscr{W} \left(c^{-1} \sum_{i=0}^{N-1} \bar{f}^i  ,c^{-1}bZ\right) \le C_* Nc^{-1}b^{-2} \sum_{m=1}^{N-1}m \rho(m).
\end{align}
\end{lem}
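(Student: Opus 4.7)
The plan is to reduce the claim to the scaling behavior of the Wasserstein distance combined with the bound~\eqref{eq:scalar_bound}. First I would observe that, since $d = 1$, the normalization $b$ is a positive scalar, so by definition $W = b^{-1}\sum_{i=0}^{N-1}\bar{f}^i$. Consequently,
\beqn
c^{-1}\sum_{i=0}^{N-1}\bar{f}^i = \frac{b}{c}\, W \quad\text{and}\quad c^{-1}bZ = \frac{b}{c}\, Z,
\eeqn
so the distance in~\eqref{eq:1d_general_scale} equals $d_\mathscr{W}\!\left( \tfrac{b}{c} W, \tfrac{b}{c} Z \right)$.

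Next I would invoke the standard linear-scaling property of the Wasserstein distance: for any $\lambda > 0$ and any $1$-Lipschitz $h$, the function $x\mapsto h(\lambda x)$ is $\lambda$-Lipschitz, so $\lambda^{-1} h(\lambda\,\cdot\,) \in \mathscr{W}$. Applying~\eqref{eq:scalar_bound} to this rescaled test function gives
\beqn
|\mu(h(\lambda W)) - \mu(h(\lambda Z))| = \lambda\,|\mu(\lambda^{-1}h(\lambda W)) - \mu(\lambda^{-1}h(\lambda Z))| \le \lambda\, d_\mathscr{W}(W,Z),
\eeqn
and taking the supremum over $h\in\mathscr{W}$ yields $d_\mathscr{W}(\lambda W, \lambda Z) \le \lambda\, d_\mathscr{W}(W,Z)$.

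Finally I would set $\lambda = b/c$ and combine with~\eqref{eq:scalar_bound} to obtain
\beqn
d_\mathscr{W}\!\left( \tfrac{b}{c} W,\, \tfrac{b}{c} Z \right) \le \frac{b}{c}\, d_\mathscr{W}(W,Z) \le \frac{b}{c}\cdot C_* N b^{-3}\sum_{m=1}^{N-1}m\rho(m) = C_* N c^{-1} b^{-2} \sum_{m=1}^{N-1}m\rho(m),
\eeqn
which is exactly~\eqref{eq:1d_general_scale}. There is no genuine obstacle here: the statement is a cosmetic rescaling of Theorem~\ref{thm:main_1d}, and the only substantive ingredient is the elementary $1$-homogeneity of $d_\mathscr{W}$ under simultaneous dilation of both marginals.
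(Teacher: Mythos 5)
Your proof is correct and follows essentially the same route as the paper: the paper's own argument is the one-line observation that $d_\mathscr{W}(aX,aY)=a\,d_\mathscr{W}(X,Y)$ for $a>0$, applied with $a=b/c$ to~\eqref{eq:scalar_bound}. Your only addition is to spell out (one direction of) this homogeneity via rescaled Lipschitz test functions, which is fine and suffices.
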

\begin{proof}
For any random variables $X,Y$ and any $a>0$, the Wasserstein metric satisfies
\begin{align*}
d_\mathscr{W}(aX,aY) = a\, d_\mathscr{W}(X,Y). 
\end{align*}
\end{proof}

\begin{remark} There is a notable difference between the upper bounds \eqref{eq:main_bound}  and \eqref{eq:1d_general_scale}: unlike \eqref{eq:main_bound},  \eqref{eq:1d_general_scale} always depends on $\textnormal{Var}_\mu( \sum_{i < N} \bar{f}^i )$ in addition to the normalizing constant $c$. This difference is due to the choice of metric.
\end{remark}

\subsection{Self-normalization}\label{sec:self_rate} We now assume that $ \operatorname{Cov}_\mu (\sum_{i=0}^{N-1} \bar f^i )$ is positive definite and set $b = \operatorname{Cov}_\mu (\sum_{i=0}^{N-1} \bar f^i )^{1/2}$ so that $\Sigma = \mu(W \otimes W) = I_{d \times d}$. In this case we establish an upper  bound on the distance between the law of $W$ and a standard normal random vector $Z \sim \cN(0, I_{d\times d})$ with respect to the Wasserstein metric. Unlike Theorem \ref{thm:main_1d},  the result applies for a general $d \ge 1$.
We denote by $\lambda_{\min}$ the least eigenvalue of $\operatorname{Cov}_\mu (\sum_{i=0}^{N-1} \bar f^i )$.

\begin{thm}\label{thm:main_self} Let $N \ge 1$. Suppose that $\max_{0 \le i < N} \Vert g^i \Vert_{\infty} \le M$ where $M \ge 1$, that $\lambda_{\min} > 1$, and that there exist a non-increasing function \(\rho : \bN \to \bR_{+}\) with with $\lim_{n \to \infty} \rho(n) = 0$ and constants $C_i > 0$, $1 \le i \le 3$, such that the following conditions hold for all  $0 \le n,m \le N-1$:
\begin{itemize}
\item[(C1)]{For all $\alpha, \beta \in \{1, \ldots , d \}$,
\begin{align*}
| \mu( \bar{f}^n_{\alpha} \bar{f}^{m}_{\beta} ) | \le C_1 \rho(|n-m|).
\end{align*}
}
\item[(C2)]{Whenever $m \le k \le N-1$ and $G : \bR^d \times B_d(0,4M+1) \to \bR^{d \times d}$ is a bounded $C^1$-function with bounded gradient,
\begin{align*}
\left| \mu \left[  ( \bar{f}^{n})^T   G \left( \sum_{0 \le i \le N-1 : \, |i - n| > k} \bar{f}^i, \bar{f}^{n,k}   \right)  \bar{f}^{n,m}   \right] \right| \le C_2 ( \Vert G \Vert_\infty + \Vert \nabla G \Vert_\infty ) \rho (m)  .
\end{align*}
}
\item[(C3)]{Whenever $2m \le k \le N-1$ and $G : \bR^d \times B_d(0,4M+1) \to \bR^{d \times d}$ is a bounded $C^1$-function with bounded gradient,
\begin{align*}
\left| \mu \left[  ( \bar{f}^{n})^T   \overline{G \left( \sum_{0 \le i \le N-1 : \, |i - n| > k} \bar{f}^i, \bar{f}^{n,k}   \right)}  \bar{f}^{n,m}   \right] \right|  \le C_3 ( \Vert G \Vert_\infty + \Vert \nabla G \Vert_\infty )  \rho(k -m).
\end{align*}}
\end{itemize}
Then
\begin{align*}
d_{\mathscr{W}}(W,Z) \le C_* N  (1 + \log N) \lambda_{\min}^{-\frac32} \sum_{m=1}^{N-1}  (1 + \log (\rho(m)^{-1} ) ) m \rho ( m ),
\end{align*}
where
\begin{align*}
C_* &= \biggl(  \frac{8 + 16d}{d} \frac{\Gamma( \frac{1 + d}{2})}{\Gamma(\frac{d}{2})} + \sqrt{d}4M + 2  \biggr) 2019 d^2 4^d M^4 \\
&\times \left[ (C_2 + C_3)(1 + \rho(0)^{-1})( 2\rho(0) + 1 ) + C_1 + 1 \right].
\end{align*}
\end{thm}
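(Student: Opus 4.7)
The proof plan is to adapt the Sunklodas-style decomposition used for Theorem~\ref{thm:main_multi_general}, with two key modifications to accommodate the class~$\mathscr{W}$ of merely Lipschitz test functions. For $h \in \mathscr{W}$, the third derivative $D^3 h$ is unavailable, so one cannot directly invoke Theorem~\ref{thm:main_multi_general}; instead one must work with the solution $A$ of the Stein equation $\Delta A(w) - w^T \nabla A(w) = h(w) - \Phi(h)$ (this is \eqref{eq:stein_multi} with $\Sigma = I_{d\times d}$), and estimate its derivatives via regularity results for the Stein PDE. The factor $\|b^{-1}\|_s^3 = \lambda_{\min}^{-3/2}$ from Theorem~\ref{thm:main_multi_general} already accounts for the $\lambda_{\min}^{-3/2}$ appearing in the claimed bound, while the extra logarithmic factors will emerge from a smoothing/optimization step.

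First, I would regularize $h$ by a Gaussian mollification: set $h_\varepsilon = h * \gamma_\varepsilon$ where $\gamma_\varepsilon$ is the density of $\cN(0,\varepsilon I_d)$. Standard convolution estimates give $\Vert h - h_\varepsilon \Vert_\infty \lesssim \sqrt{\varepsilon d}$ and $\Vert D^k h_\varepsilon \Vert_\infty \lesssim_d \varepsilon^{(1-k)/2}$ for $k\ge 2$. By the Gallou\"et--Mijoule--Swan regularity estimates cited in the introduction, the Stein solution $A_\varepsilon$ corresponding to $h_\varepsilon$ satisfies $\Vert D^k A_\varepsilon \Vert_\infty \le C_k(d) \Vert D^{k-1} h_\varepsilon \Vert_\infty$ for $k = 2,3$, with constants having at most logarithmic dependence on $d$. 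In particular, $\Vert D^3 A_\varepsilon \Vert_\infty \lesssim_d \varepsilon^{-1/2}$.

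Next I would reuse the proof of Theorem~\ref{thm:main_multi_general} essentially verbatim, but with $A_\varepsilon$ in place of the solution for $h$. The Taylor expansion of $\nabla A_\varepsilon(W)$ about the punctured sums $W^{n,k}$ produces the function $G_{A_\varepsilon}(x,y) = b^{-1}[D^2A_\varepsilon(sb^{-1}(x+ty)+z) - D^2A_\varepsilon(sb^{-1}x+z)]b^{-1}$ whose $C^1$-norm is controlled by $\Vert D^3 A_\varepsilon\Vert_\infty \Vert b^{-1}\Vert_s^2$, and conditions (C1)--(C3) (being quantified over all bounded $C^1$ matrix-valued $G$ with bounded gradient, exactly like (A1)--(A3) but in a uniform-over-$G$ form) feed directly into the decomposition. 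This yields
\begin{equation*}
|\mu[h_\varepsilon(W)] - \Phi(h_\varepsilon)| \;\le\; C(d,M,C_1,C_2,C_3)\,\varepsilon^{-1/2}\, N\, \lambda_{\min}^{-3/2} \sum_{m=1}^{N-1} m\rho(m).
\end{equation*}
Combining with the mollification error and $|\mu[h(W) - h_\varepsilon(W)]| + |\Phi(h_\varepsilon) - \Phi(h)| \lesssim \sqrt{\varepsilon d}$, one optimizes in $\varepsilon$.

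The main obstacle, and the source of the logarithmic factors in the statement, is a sharper optimization. A single choice of $\varepsilon$ yields a bound with an extra $\sqrt{N\sum m\rho(m)}$ rather than the stated $N(1+\log N)\sum(1+\log\rho(m)^{-1})m\rho(m)$. To recover the stated form I expect one must either (i) truncate the Sunklodas decomposition at a level $k_* = k_*(N)$ with $\rho(k_*) \sim 1/N$, picking up a $\log N$ factor from this truncation and using the non-increasing property of $\rho$; or (ii) run the smoothing argument scale by scale in $m$, choosing $\varepsilon = \varepsilon(m)$ adapted so that the optimized contribution of each term $m\rho(m)$ carries its own factor $1 + \log\rho(m)^{-1}$. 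The non-increasing assumption on $\rho$ and the bound $\lambda_{\min} > 1$ in the hypotheses strongly suggest approach (i), with $\lambda_{\min} > 1$ used to absorb certain constants that arise when interchanging mollification and Stein regularity. Carrying through the constants to obtain the explicit $C_*$ with its $\Gamma$-function prefactor (coming from Gaussian tail integrals in the Gallou\"et--Mijoule--Swan bound) is then a bookkeeping exercise.
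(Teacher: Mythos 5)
There is a genuine gap. Your core strategy --- mollify the Lipschitz test function $h$, run the smooth-metric argument of Theorem~\ref{thm:main_multi_general} on $h_\varepsilon$, and optimize over $\varepsilon$ --- cannot produce the stated bound: as you yourself note, balancing the $\varepsilon^{-1/2}$ blow-up of the main term against the $\sqrt{\varepsilon}$ mollification error yields only the square root of $N\lambda_{\min}^{-3/2}\sum_m m\rho(m)$ (e.g.\ $N^{-1/4}$ instead of $N^{-1/2}\log N$ when $\lambda_{\min}\gtrsim N$ and $\sum_m m\rho(m)<\infty$). The two remedies you sketch do not close this. Truncating the Sunklodas decomposition at $k_*$ with $\rho(k_*)\sim 1/N$ does nothing to the $\varepsilon$-tradeoff, since the $\varepsilon^{-1/2}$ factor multiplies the entire main term regardless of where the sums are cut; and the ``scale-by-scale'' choice $\varepsilon=\varepsilon(m)$ is incoherent as stated, because the smoothing error $|\mu[h(W)-h_\varepsilon(W)]|\sim\sqrt{\varepsilon}$ is a single global quantity attached to the test function, not something that can be distributed over the indices $m$ without a genuinely new argument. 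Your attribution of the hypothesis $\lambda_{\min}>1$ to ``absorbing constants when interchanging mollification and Stein regularity'' is likewise not a proof step.

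The paper's mechanism is different and avoids smoothing $h$ altogether. For $\Sigma=I_{d\times d}$ and Lipschitz $h$, Lemma~\ref{lem:solution_self} (Gallou\"et--Mijoule--Swan) gives that the Stein solution $A$ is already $C^2$ with $D^2A$ Lipschitz up to a logarithmic factor; Proposition~\ref{prop:sunklodas_multi} is then applied directly to this $A$. The increments $\delta^{n,m}(u)$ give rise to functions $G\in\cG$ (Lemma~\ref{lem:class_G}) which are only log-Lipschitz, so they do not satisfy the $C^1$ hypothesis of (C2)--(C3); the only mollification in the proof is applied to these $G$'s, at the scale $\varepsilon=\tfrac12\rho(m)/\rho(0)$ (Lemma~\ref{lem:new_cond}, Appendix~\ref{sec:approx-lemma-pf}). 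Because $G$ is log-Lipschitz, the mollification error is $O(\varepsilon\log\varepsilon^{-1})$ and the gradient of $G^\varepsilon$ blows up only like $\log\varepsilon^{-1}$, so the cost is the factor $(1+\log\rho(m)^{-1})$ per term rather than a power of $\varepsilon$ --- this is exactly the quantitative gain your approach forfeits by differentiating a smoothed $h$ three times. The $(1+\log N)$ factor comes from bounding $|\log\Vert b^{-1}y\Vert|$ via $\Vert b^{-1}\Vert_s^{-1}\le\Vert b\Vert_s\le\sqrt{d}\,2MN$ in Lemma~\ref{lem:class_G} (where $\lambda_{\min}>1$ is actually used), not from a truncation of the decomposition. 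To repair your proof you would need to replace the mollification of $h$ by this use of the log-Lipschitz regularity of $D^2A$ for the unsmoothed $h$, and mollify only the functions fed into conditions (C2)--(C3).
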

\medskip

\begin{remark} If in addition $\sum_{m=1}^{\infty} (1 + \log (\rho(m)^{-1} ))m \rho(m) < \infty$ and $\lambda_{\min} \gtrsim N$ hold, then we obtain the rate $d_{\mathscr{W}}(W,Z)= O(N^{-1/2} \log N)$, as $N \to \infty$. 
\end{remark}

\subsection{P{\`e}ne's CLT for stationary dynamics}\label{sec:pene} The theorems given above apply in the stationary case where $T_n = T$ preserves the measure $\mu$ for all $n \ge 1$. In this case the problem of normal approximation has been studied in several important articles including \cite{gouezel2005, Dedecker_2008, Liverani_2018, Pene_2002, Dubois_2011, Jan_2000}, using different  methods, conditions, and metrics. In the multidimensional case $d > 1$,  P{\`e}ne \cite{pene2005} formulated a correlation-decay condition for stationary processes, based on the inductive proof of Rio \cite{Rio_1996}. Let $S_n = \sum_{i=0}^{n-1}  f^i $, where $f^i = f \circ T^i$, $f : X \to \bR^d$ is bounded and $\mu(f) = 0$. In this context of measure preserving transformations, P{\`e}ne's condition can be stated as follows:

\begin{itemize}
\item[(D)]{There exist $r \in \mathbb{Z}_+$,  $C > 0$, $M \ge \max\{ 1 , \Vert f \Vert_\infty \}$, and a sequence of real numbers $(\varphi_{p,l})$ with $|\varphi_{p,l}| \le 1$ and $ \sum_{p=1}^{\infty} p \max_{0 \le l \le \round{p/(r+1)}} \varphi_{p,l} < \infty $, such that for any integers \(a,b,c \ge 0\) satisfying \(1 \le a + b + c \le 3\), for any integers \(i,j,k,p,q,l\) with \(0 \le i \le j \le k \le k + p \le k + p + q \le k + p + l\), for any \(\alpha, \beta, \gamma \in \{1,\ldots , d\}\), and for any bounded differentiable function \(F : \, \bR^d \times ([-M,M]^d)^3 \to \bR\) with bounded gradient,
\begin{align*}
&| \textnormal{Cov}_\mu[ F(S_{i},f^i,f^j,f^k),(f^{k+p}_{\alpha})^a(f^{k+p+q}_{\beta})^b(f^{k+p+l}_{\gamma})^c] | \notag \\
&\le C(\Vert F \Vert_{\infty} + \Vert \nabla F \Vert_{\infty})\, \varphi_{p,l}.
\end{align*}}
\end{itemize}

Condition (D) is satisfied by chaotic dynamical systems such as Sinai billiards. It was shown in \cite{pene2005} that condition (D) implies the existence of the limit $\Sigma_{0} := \lim_{n\to\infty} n^{-1} \mu(S_n \otimes S_n) $ and,  whenever $\Sigma_{0}$ is nonnull, the existence of a constant $B > 0$ such that
\begin{align}\label{eq:pene}
d_{\mathscr{W}}( N^{-\frac12} S_N, U) \le B N^{-\frac12} \qquad \forall N \ge 1,
\end{align}
where $U$ is a Gaussian random variable with expectation $0$ and covariance matrix $\Sigma_0$.

Compared to Theorem \ref{thm:main_multi_general}, \eqref{eq:pene} gives an upper bound of the same order $O(N^{-1/2})$ for stationary systems whose correlations decay at a rate which has a finite first moment, for test functions that are only assumed to be Lipschitz continuous. On the other hand, Theorem  \ref{thm:main_multi_general} is more general in that it applies for rather arbitrary matrix-valued normalizing sequences $b(N)$. Furthermore, the constant $C_*$ in \eqref{eq:main_bound} is more explicit than the one in \eqref{eq:pene} in terms of its dependence on $d$, $f$ and the underlying dynamical system. The same can be said about the constant $C_*$ in Theorem \ref{thm:main_self}, which gives an upper bound for the same metric as \eqref{eq:pene} but with a slightly weaker rate of convergence due to the logarithmic factor. Note that, similarly to conditions (B2)-(B3) and (C2)-(C3), condition (D) has to be verified for a whole class of regular functions $F$. 

\section{Application I: Random $1D$ piecewise expanding maps}\label{sec:pw}

In this section we apply Theorem \ref{thm:main_1d} to estimate the rate of convergence in the quenched CLT for a class of piecewise expanding random dynamical systems. Namely we consider the setup studied by Dragi\v{c}evi\'{c} et al. in \cite{Froyland_2018}. Below we recall some definitions and results from \cite{Froyland_2018} as they are necessary for understanding the application given in this section.

Set $(X,\cB) = ([0,1], \text{Borel}([0,1]) )$ and for a function $g : X \to \bR$ define its total variation by
\begin{align*}
V(g) = \inf_{h = g \text{ $m$-a.e}} \sup_{0 = x_0 < \ldots < x_n =1} \sum_{k=1}^n |h(x_k) - h(x_{k-1}) |.
\end{align*}
Moreover, define
\begin{align*}
\Vert g \Vert_{\text{BV}} = V(g) + \Vert g \Vert_{L^1(m)}.
\end{align*}
The Banach space $BV$ consists of all functions $g$ with $V(g) < \infty$ and is equipped with the norm $\Vert \cdot \Vert_{BV}$.

Let us denote by $\cE$ the collection of all maps $T : X \to X$ for which there exists a finite partition $\cA(T)$ of $X$ into subintervals such that for every $I \in \cA(T)$:
\begin{itemize}
\item[(1)]{$T \upharpoonright I$ extends to a $C^2$ map in a neighborhood of $I$;} \smallskip
\item[(2)]{$\delta(T) := \inf |T'| > 1$.}
\end{itemize}
The map $T$ is monotonous on each element $I \in \cA(T)$. From now on we take $\cA(T)$ to be the minimal such partition and set $N(T) = | \cA(T) |$.

Let $(\Omega, \cF, \bP)$ be a probability space and let $\tau : \Omega \to \Omega$ be an invertible $\bP$-preserving transformation. We consider a map $\omega \mapsto T_{\omega}$ from $\Omega$ into $\cE$. Random compositions of maps are denoted by
\begin{align*}
T^n_{\omega} = T_{\tau^{n-1}\omega} \circ \cdots \circ T_{\omega}
\end{align*}
and
\begin{align*}
\cL^n_{\omega} = \cL_{\tau^{n-1}\omega} \cdots \cL_{\omega},
\end{align*}
where $\cL_\omega : L^1(m) \to L^1(m)$ is the transfer operator associated to $T_\omega$:
\begin{align*}
\cL_\omega f (x) = \sum_{T_\omega(y) = x} \frac{ f( y )   }{| (T_\omega)'y |}.
\end{align*}

\noindent\textbf{Conditions (H):}

\begin{itemize}
\item[(i)]{$\tau : \Omega \to \Omega$ is invertible, $\bP$-preserving, and ergodic.} \smallskip
\item[(ii)]{The map $(\omega, x) \mapsto (\cL_{\omega} H(\omega, \cdot) )(x)$ is measurable for every measurable function $H : \Omega \times X \to \bR$ such that $H(\omega, \cdot) \in L^1(X,m)$. } \smallskip
\item[(iii)]{$N := \sup_{\omega \in \Omega} N(T_\omega) < \infty$; $\delta : =  \inf_{\omega \in \Omega} \delta(T_\omega) > 1$; $D := \sup_{\omega \in \Omega} |T''_\omega| < \infty$.
} \smallskip
\item[(iv)]{There is $r \ge 1$ such that $\delta^r > 2$ and $\text{ess inf}_{\omega \in \Omega} \min_{J \in \cA(T_\omega^r)} m(J) > 0$.} \smallskip
\item[(v)]{For every subinterval $J \subset X$ there is $k = k(J) \ge 1$ such that $T^k_\omega(J) = X$ holds for almost every $\omega \in \Omega$.}
\end{itemize}

\begin{remark} It was shown in \cite{Froyland_2018} that conditions (H) imply several nice properties for the transfer operators $\cL_{\omega}$, including a Lasota-Yorke inequality and exponential decay in the BV-norm. The authors used such properties to establish an almost sure invariance principle.
\end{remark}

\begin{lem}[See Proposition 1 in \cite{Froyland_2018}]\label{lem:froy_1} Assume conditions (H). Then there exists a unique measurable and non-negative function $h : \Omega \times X \to \bR$ such that $h_\omega := h(\omega,\cdot) \in BV$, $m(h_\omega) =1$ and $\cL_\omega(h_\omega) = h_{\tau(\omega)}$ for almost every $\omega \in \Omega$. Moreover, $\textnormal{ess sup}_{\omega \in \Omega} \Vert h_{\omega} \Vert_{BV} < \infty$.
\end{lem}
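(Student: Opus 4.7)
The plan is to deduce everything from a uniform Lasota--Yorke inequality and Helly's selection theorem, then arrange a measurable equivariant selection using condition (H)(ii).

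First, I would prove that there exist constants $\lambda \in (0,1)$ and $K > 0$, independent of $\omega$, such that for almost every $\omega$ and every $g \in BV$,
\begin{equation*}
V(\cL_\omega^r g) \le \lambda V(g) + K\, \Vert g \Vert_{L^1(m)},
\end{equation*}
where $r$ is the integer from condition (H)(iv). This is the classical Lasota--Yorke computation: split the variation according to the monotonicity intervals of the iterate $T_\omega^r$, estimate the contribution of each branch using the chain rule together with the expansion $\delta(T_\omega^r) \ge \delta^r > 2$, and control the boundary terms using the lower bound on $\min_{J \in \cA(T_\omega^r)} m(J)$ from (H)(iv), together with the uniform second-derivative bound $D$ from (H)(iii) that guarantees uniform distortion. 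Iterating this inequality along the orbit $\{\tau^j\omega\}_{j\ge 0}$ yields the uniform bound $\sup_{n \ge 0} \Vert \cL_\omega^n \mathbf{1} \Vert_{BV} \le C$ for a.e.\ $\omega$, with $C$ independent of $\omega$.

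Next, for existence, I would consider the pullback densities $h_\omega^{(n)} = \cL_{\tau^{-n}\omega}^n \mathbf{1}$, which are nonnegative, integrate to $1$, and satisfy a uniform BV bound by the previous step. Because $BV \hookrightarrow L^1(m)$ is compact (Helly), the sequence is $L^1$-precompact for each $\omega$. Passing to Cesàro averages $N^{-1}\sum_{n<N} h_\omega^{(n)}$ and extracting an $L^1$-convergent limit $h_\omega$ gives a candidate invariant density; the continuity of $\cL_\omega$ on $L^1$ together with the cocycle identity $\cL_\omega h_\omega^{(n)} = h_{\tau\omega}^{(n-1)} \circ(\text{shift})$ yields the equivariance relation $\cL_\omega h_\omega = h_{\tau\omega}$ and the uniform BV bound is inherited from lower semicontinuity of $V$ under $L^1$-limits.

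For uniqueness I would use condition (H)(v): any two BV equivariant densities $h_\omega^{(1)}, h_\omega^{(2)}$ would give a signed equivariant density $g_\omega = h_\omega^{(1)} - h_\omega^{(2)}$ with mean zero. The random covering property (H)(v), combined with the quasi-compactness provided by the Lasota--Yorke inequality, forces the cocycle $\cL$ to have a one-dimensional leading Oseledets space in $BV$, so $g_\omega \equiv 0$ almost everywhere; equivalently, one shows directly that positive and negative parts of $g_\omega$ cannot have disjoint supports after sufficiently many random iterations.

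The main obstacle is the measurability of $\omega \mapsto h_\omega$, since Helly selection is inherently non-measurable. I expect this is where condition (H)(ii) becomes essential: it guarantees that each $\omega \mapsto h_\omega^{(n)}$ is jointly measurable, so the Cesàro averages are jointly measurable, and their $L^1$-limit (once uniqueness of the limit is known from the uniqueness argument above) is again measurable. In practice I would likely invoke the general machinery of measurable random Perron--Frobenius cocycles (in the style of Buzzi or Froyland--Lloyd--Quas) to package the measurable selection cleanly, rather than construct it by hand.
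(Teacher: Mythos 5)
This lemma is not proved in the paper at all: it is imported verbatim as Proposition~1 of \cite{Froyland_2018}, so there is no internal argument to compare yours against — you have supplied a proof where the authors supply a citation. Your outline (uniform Lasota--Yorke constants from (H)(iii)--(iv), pullback densities, the covering condition (H)(v) for uniqueness, measurable cocycle machinery for the selection) is the standard route and is consistent with the estimates the paper itself quotes from \cite{Froyland_2018}, e.g.\ the uniform inequality \eqref{eq:lasota_yorke} used in the proof of Lemma~\ref{lem:ar}.

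The one step I would not accept as written is the existence-plus-measurability argument via Ces\`aro averages and Helly compactness. For each fixed $\omega$ you extract an $L^1$-convergent subsequence, but the subsequence depends on $\omega$; the equivariance $\cL_\omega h_\omega = h_{\tau\omega}$ couples the limits at $\omega$ and $\tau\omega$, so a per-$\omega$ subsequential limit need not assemble into an equivariant family, and your uniqueness statement — which concerns measurable \emph{equivariant} families — therefore neither identifies these limits nor yields convergence of the full sequence; the claim that measurability follows ``once uniqueness of the limit is known'' is circular as stated. The clean repair, with exactly the tools available under (H), is the exponential memory-loss estimate for mean-zero BV functions (quoted later in the same section as Lemma~2 of \cite{Froyland_2018}): writing $\cL^{n+m}_{\tau^{-(n+m)}\omega}\mathbf{1}-\cL^{n}_{\tau^{-n}\omega}\mathbf{1}=\cL^{n}_{\tau^{-n}\omega}\bigl(\cL^{m}_{\tau^{-(n+m)}\omega}\mathbf{1}-\mathbf{1}\bigr)$, the argument in parentheses has $m$-mean zero and uniformly bounded BV norm by the Lasota--Yorke step, so the pullback sequence $\cL^{n}_{\tau^{-n}\omega}\mathbf{1}$ is Cauchy in BV; its limit exists for a.e.\ $\omega$ with no subsequence extraction, is measurable as a pointwise limit of measurable functions (using (H)(i)--(ii) and the invertibility of $\tau$), is equivariant and uniformly BV-bounded, and applying the same decay to the difference of two equivariant families gives uniqueness in one stroke. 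Your stated fallback — invoking the measurable random Perron--Frobenius machinery of Buzzi or Froyland--Lloyd--Quas — is legitimate, but note that this is essentially what Proposition~1 of \cite{Froyland_2018} packages, i.e.\ at that point you are citing the result rather than proving it, which is exactly what the paper does.
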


\subsection{Statement of result} Let $f : X \to \bR$ be a bounded measurable function and set
\begin{align*}
 \widetilde{f}(\omega,x) = \widetilde{f}_\omega(x) = f(x) - \mu_{\omega}(f),
\end{align*}
where $d \mu_{\omega} = h_\omega \, dm$ and $h$ is the function from Lemma \ref{lem:froy_1}. Set
\begin{align*}
W(\omega) = b^{-1} \sum_{n=0}^{N-1} \widetilde{f}_{\tau^n(\omega)} \circ T^n_{\omega} = b^{-1} \sum_{n=0}^{N-1} (f \circ T^n_{\omega} - \mu_\omega(f \circ T^n_\omega)),
\end{align*}
where $b$ is the square root of  ${\mu_\omega} [ (  \sum_{n=0}^{N-1} \widetilde{f}_{\tau^n(\omega)} \circ T^n_{\omega})^{2} ] $. We denote by $\varphi : \Omega \times X \to \Omega \times X$ the skew product $\varphi(\omega, x) = (\tau(\omega), T_{\omega}(x))$, which preserves the measure $\mu$ on $\Omega \times X$ defined by 
\begin{align*}
\mu(A \times B) = \int_{A \times B}  h \, d ( \bP \times m), \hspace{0.5cm} A \in \cF, \, B \in \cB.
\end{align*}

\begin{thm}\label{thm:pw} Consider a family of piecewise expanding maps $(T_{\omega})_{\omega \in \Omega}$ such that conditions (H) hold. Fix $N \ge 1$ and suppose $f$ is Lipschitz continuous such that $\widetilde{f}$ can not be written as $g - g \circ \varphi$ for any $g \in L^2(\Omega \times X, \mu)$. Then there is $C_* > 0$ independent of $N$ such that
\begin{align*}
d_{\mathscr{W}}(W,Z) \le C_* N^{-\frac12}
\end{align*}
holds for almost every $\omega \in \Omega$. Here $Z \sim \cN(0,1)$ is a random variable with standard normal distribution.
\end{thm}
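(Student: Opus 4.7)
The plan is to apply Theorem \ref{thm:main_1d} in the quenched setting: for $\bP$-a.e.\ $\omega$, set $T_n := T_{\tau^{n-1}\omega}$ for $n \ge 1$, take $\mu := \mu_\omega$, the constant observable sequence $g^i \equiv f$, and $b := \bigl[\text{Var}_{\mu_\omega}\bigl(\sum_{i<N} \bar{f}^i\bigr)\bigr]^{1/2}$. With these identifications the variable $W$ of Theorem \ref{thm:main_1d} coincides with the $W$ of the present theorem. The task is then to verify (B1)--(B3) with an exponentially decaying $\rho$ and to show that $b^2 \ge c(\omega) N$ for all large $N$, a.s.\ in $\omega$; this immediately turns \eqref{eq:scalar_bound} into the desired rate $O(N^{-1/2})$.

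For (B1), using the $m$-duality $\int (g\circ T_\omega)\, h\, dm = \int g\, \cL_\omega h\, dm$ together with the equivariance $\cL_\omega^n h_\omega = h_{\tau^n\omega}$, one rewrites, for $n \le m$,
\begin{equation*}
\mu_\omega(\bar{f}^n \bar{f}^m) = \int f \cdot \cL_{\tau^n\omega}^{\,m-n}\bigl((f - \mu_{\tau^n\omega}(f))\,h_{\tau^n\omega}\bigr)\,dm.
\end{equation*}
Conditions (H) yield a uniform BV Lasota--Yorke inequality and exponential decay of the transfer cocycle $(\cL_\omega^n)$ on the mean-zero subspace (cf.\ \cite{Froyland_2018}), so that $|\mu_\omega(\bar f^n \bar f^m)| \le C\lambda^{m-n}$ with $C$ and $\lambda \in (0,1)$ independent of $\omega$ on a full-measure set; hence (B1) holds with $\rho(k) = \lambda^k$. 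Conditions (B2) and (B3) follow from the same spectral-gap mechanism applied to multi-time correlations: the key reduction is to freeze the ``far'' argument $X = \sum_{|i-n|>k}\bar f^i$ inside $G$ at a representative value, at a cost of $\text{Lip}(G)$ times the $L^2$-norm of the removed increment (which is $O(1)$), and then to bound the remaining correlation of $\bar f^n$ with $G(X_0, \bar f^{n,k})\bar f^{n,m}$ by a product of two transfer-cocycle decay factors, picking up $\lambda^m$ for (B2) and $\lambda^{k-m}$ for (B3).

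The final ingredient is the linear lower bound $b^2 \ge c(\omega) N$. The hypothesis that $\widetilde f$ is not a coboundary for the skew product $\varphi$ is the standard Leonov-type condition guaranteeing a positive Green--Kubo variance
\begin{equation*}
\sigma^2 := \lim_{N\to\infty} \frac{1}{N}\, \mu_\omega\!\left[\Bigl(\sum_{n=0}^{N-1} \widetilde f_{\tau^n\omega}\circ T_\omega^n\Bigr)^{\!2}\right] > 0
\end{equation*}
for $\bP$-a.e.\ $\omega$; in the present setup this is precisely the variance analysis already carried out in \cite{Froyland_2018} en route to their ASIP. Substituting $\rho(m) = \lambda^m$ and $b^{-3} \le C(\omega) N^{-3/2}$ into \eqref{eq:scalar_bound} then yields
\begin{equation*}
d_\mathscr{W}(W,Z) \le C_* N b^{-3}\sum_{m=1}^{N-1} m \lambda^m \le C_*(\omega)\, N^{-1/2},
\end{equation*}
which is the claim.

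The main technical obstacle I anticipate is the verification of (B2) and (B3): coupling the Lipschitz dependence of $G$ on the random sums with the BV spectral gap of the quenched transfer cocycle requires careful bookkeeping, since one has to split the time axis on both sides of $n$ and choose the order in which to apply the transfer operator so that the $\text{Lip}(G)$ factor multiplies only a centred BV quantity, producing the advertised exponential decay with prefactor $(\Vert G \Vert_\infty + \text{Lip}(G))$. The variance lower bound, while conceptually standard, also depends essentially on the non-coboundary hypothesis to rule out sublinear growth.
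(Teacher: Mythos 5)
Your overall strategy is the same as the paper's: apply Theorem \ref{thm:main_1d} quenched in $\omega$ with exponential $\rho$, and get the linear lower bound $b^2\ge c(\omega)N$ from the non-coboundary characterization of $\sigma^2$ in \cite{Froyland_2018}. That last part, and your verification of (B1) via duality and the exponential decay of the transfer cocycle on mean-zero BV functions, are sound and essentially what the paper does.

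The gap is in your treatment of (B2)--(B3), which is the heart of the matter. You propose to ``freeze'' the far argument $X=\sum_{|i-n|>k}\bar f^i$ of $G$ at a representative value at a cost of $\textnormal{Lip}(G)$ times the $L^2$-norm of the removed increment, claimed to be $O(1)$. This fails twice over. First, $X$ is a sum of order $N$ centred bounded terms, so $\Vert X-X_0\Vert_{L^2}$ is of order $\sqrt{N}$ (indeed the whole point of the non-degeneracy assumption is that the variance grows linearly), not $O(1)$. Second, and more fundamentally, even an $O(1)$ freezing error is useless: conditions (B2)--(B3) demand a bound of the form $C(\Vert G\Vert_\infty+\textnormal{Lip}(G))\rho(m)$, i.e.\ decaying in the gap $m$ (resp.\ $k-m$); a non-decaying error term, once fed into \eqref{eq:scalar_bound}, contributes $N b^{-3}\sum_{m<N} m\cdot O(1)\sim N^{3/2}$ and destroys the rate entirely. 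The paper avoids freezing altogether: it proves a \emph{functional correlation bound} (Proposition \ref{lem:pw_funct_corr}) for observables $F_\omega$ of many coordinates that are merely bounded and Lipschitz \emph{in each coordinate separately}, with an error that is a sum of $\gamma^{\mathrm{gap}}$ over block boundaries and is independent of the number of coordinates. One then applies this with $p=2$ to the single function $\psi_n(x_n)\,G\bigl(\sum_{|i-n|>k}\psi_i(x_i),\sum_{|i-n|=k}\psi_i(x_i)\bigr)\sum_{|i-n|=m}\psi_i(x_i)$ (and its centred variant for (B3)); the per-coordinate Lipschitz constant is controlled by $\Vert f\Vert_{\textnormal{Lip}}^3(\Vert G\Vert_\infty+\textnormal{Lip}(G))$, and the decoupling across the two gaps of size $m$ around time $n$ produces exactly the factor $\gamma^{m}$ (resp.\ the decay in $k-m$ for the centred version), with the correct prefactor. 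Establishing this proposition — via the uniform Lasota--Yorke inequality, the Aimino--Rousseau bound on $\sum_{I\in\cA(T_\omega^n)}V_I(1/|(T_\omega^n)'|)$, and the exponential BV decay of $\cL_\omega^n$ on mean-zero densities — is the missing ingredient your sketch needs; without it (or an equivalent multi-time decorrelation statement in which the prefactor involves only per-coordinate Lipschitz data), the verification of (B2)--(B3) does not go through.
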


\begin{remark} The proof of Theorem \ref{thm:pw} is based on Theorem \ref{thm:main_1d}. Theorem \ref{thm:main_multi_general} or \ref{thm:main_self} could be used instead to obtain similar central limit theorems for multivariate observables $f : X \to \bR^d$. \end{remark}

\subsection{A functional correlation bound} Conditions (B2) and (B3) of Theorem \ref{thm:main_1d} will be verified by applying the auxiliary result given below, which facilitates bounding integrals of the form $\int F \circ (T_\omega^m )_{0 \le m < k} \, d\mu$, where $F : [0,1]^k \to \bR$ is not necessarily a product of one-dimensional observables. Such functional correlation bounds were established for stationary Sinai billiards in \cite{LS_2017} and for time-dependent intermittent maps in \cite{Leppanen_2017}.  

For a function $F : [0,1]^k \to \bR$, $\theta \in (0,1]$, and $1 \le \beta \le k$ we denote 
\begin{align}\label{eq:holder}
 [F]_{\theta,\beta} = \sup_{x \in [0,1]^k} \sup_{a \neq a'}  \frac{ | F(x(a/\beta)  ) - F(x(a'/\beta) )|  }{|a - a' |^{\theta}},
\end{align}
where $x(a/\beta) \in [0,1]^k$ is obtained from $x$ by replacing the $\beta$th coordinate with $a \in [0,1]$. We say that $F$ is $\theta$-Hölder continuous in the coordinate $\beta$ if $[F]_{\theta,\beta} < \infty$.

\begin{prop}\label{lem:pw_funct_corr} Let $k \ge 2$. Consider integers  \(0 \le n_1 \le \ldots \le n_k\) blocked according to a set of indices \(0 = \ell_0 < \ell_1 < \ldots < \ell_p < \ell_{p+1} = k\), where we assume that $n_{\ell_i + 1} < \ldots < n_{\ell_{i+1}}$ hold for all $0 \le i \le p$. Suppose $(T_\omega)_{\omega\in\Omega} \subset \cE$ is a family of maps such that conditions (H) hold, and that \(F_{\omega} : \, [0,1]^{k} \to \bR\) is a function with $\textnormal{ess sup}_{\omega\in \Omega} \Vert F_\omega \Vert_{\infty} < 0$ and
\begin{align*}
L = \textnormal{ess sup}_{\omega \in \Omega} \sup_{1 \le \beta \le \ell_p } [F_\omega]_{\theta,\beta} < \infty.
\end{align*}
Denote by \(H_{\omega}(x_1,\ldots, x_{p+1})\) the function
\begin{align*}
F_\omega(T^{n_1}_{\omega} (x_1),\ldots, T^{n_{\ell_1}}_{\omega} (x_1), T^{n_{{\ell_1}+1}}_\omega (x_2),\ldots, T^{n_{{\ell_2}}}_\omega (x_2), \ldots, T^{n_{\ell_p+1}}_\omega (x_{p+1}),\ldots, T^{n_k}_\omega (x_{p+1})).
\end{align*}
Then, for any probability measures \(\mu_1,\ldots,\mu_{p+1}\) whose densities belong to \(BV\), and for almost every $\omega \in \Omega$, 
\begin{align}
&\left| \int H_\omega(x,\ldots,x) \, d\mu_1(x) - \idotsint H_\omega(x_1,\ldots,x_{p+1}) \, d\mu_1(x_1)d\mu_2(x_2)\ldots \, d\mu_{p+1} (x_{p+1}) \right| \notag\\ 
&\le C (  \textnormal{ess sup}_{\omega \in \Omega} \Vert F_\omega \Vert_{\infty} + L ) \left( \sum_{i=1}^{p+1} \Vert h_i \Vert_{BV} \right)  \sum_{i=1}^p \gamma^{n_{\ell_i + 1} - n_{\ell_i}}  \label{eq:pw_funct_corr},
\end{align}
where $0 < \gamma < 1$, and $C = C(p, (T_\omega)_{\omega \in \Omega}, \theta) > 0$.

\end{prop}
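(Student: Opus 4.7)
The plan is to prove \eqref{eq:pw_funct_corr} by a telescoping argument over the $p$ temporal separations between blocks, combined with the exponential decay of the random transfer operator $\cL^n_\omega$ in the BV--norm established under conditions (H) in \cite{Froyland_2018}.

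For $0 \le j \le p$ I would set
\begin{align*}
A_j = \idotsint H_\omega(x_1, \ldots, x_{j+1}, x_{j+1}, \ldots, x_{j+1}) \, d\mu_1(x_1) \cdots d\mu_{j+1}(x_{j+1}),
\end{align*}
where $x_{j+1}$ is repeated in slots $j+1, \ldots, p+1$. Then $A_0$ equals the diagonal integral on the left side of \eqref{eq:pw_funct_corr} and $A_p$ equals the product integral, so by the triangle inequality it suffices to bound each $|A_{j-1} - A_j|$ by a constant multiple of $(\textnormal{ess sup}_\omega \Vert F_\omega \Vert_\infty + L)\bigl(\sum_{i=1}^{p+1}\Vert h_i \Vert_{BV}\bigr)\gamma^{n_{\ell_j+1} - n_{\ell_j}}$.

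After integrating out $x_1, \ldots, x_{j-1}$, the difference $A_{j-1} - A_j$ captures whether blocks $j+1, \ldots, p+1$ share their initial point with block $j$ or are independent of it. Writing $\sigma = \tau^{n_{\ell_j}}\omega$ and $q = n_{\ell_j+1} - n_{\ell_j}$, and applying the cocycle identity $T^{n_r}_\omega = T^{n_r - n_{\ell_j}}_\sigma \circ T^{n_{\ell_j}}_\omega$ for $r > \ell_j$, this difference can be rearranged into the form
\begin{align*}
\int \Phi_{j-1}(y)\left[\chi_y(T^{n_{\ell_j+1}}_\omega y) - \int \chi_y(T^{n_{\ell_j+1}}_\omega v)\, d\mu_{j+1}(v)\right] d\mu_j(y),
\end{align*}
where $\Phi_{j-1}$ absorbs the integration against $\mu_1, \ldots, \mu_{j-1}$ (its BV--norm being controlled by $\sum_{i < j}\Vert h_i \Vert_{BV}$), and $\chi_y(u)$ depends on $y$ through the past block-$j$ slots $T^{n_r}_\omega y$ ($\ell_{j-1} < r \le \ell_j$) and on $u$ through the future compositions $T^{n_r - n_{\ell_j+1}}_{\tau^q\sigma}u$ ($r > \ell_j$).

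To extract the decay in $q$, I would approximate $F_\omega$ in its first $\ell_p$ coordinates by a function that is piecewise constant on a grid of mesh $\epsilon$; the H\"older hypothesis $[F_\omega]_{\theta,\beta} \le L$ for $\beta \le \ell_p$ incurs an error of order $L \epsilon^\theta$. This freezes the dependence of $\chi_y$ on $y$, reducing the bracketed expression on each grid cell to a BV--$L^\infty$ correlation at temporal separation $n_{\ell_j+1}$ between a product of $h_j$ with indicators of preimages under $T^{n_r}_\omega$ (a BV function of $y$) and a bounded future factor. The spectral gap $\Vert \cL^n_\omega \varphi - m(\varphi) h_{\tau^n\omega}\Vert_{BV} \le K \gamma_0^n \Vert \varphi \Vert_{BV}$ from \cite{Froyland_2018} then yields decay $K \gamma_0^{n_{\ell_j+1}} \le K \gamma_0^q$. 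Optimizing $\epsilon$ converts the combined error into $\gamma^q$ with a new $\gamma \in (0,1)$ depending on $\gamma_0$, $\theta$, $N$, and $p$. The main technical obstacle is precisely the coupling between $y$ and $u$ in $\chi_y$, which blocks a direct application of the transfer-operator decay and forces the piecewise-constant freezing step. Summing over $j = 1, \ldots, p$ produces the factor $\sum_{i=1}^p \gamma^{n_{\ell_i+1} - n_{\ell_i}}$; the BV--norms of the $h_i$ accumulate linearly as in \eqref{eq:pw_funct_corr}; and the almost-every-$\omega$ claim is inherited from the fact that the BV decay of $\cL^n_\omega$ in \cite{Froyland_2018} holds for $\bP$-a.e.\ $\omega$.
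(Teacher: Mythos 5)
Your telescoping over the block gaps is fine in itself: it is just a reorganization of the paper's induction, which likewise decouples one block at a time using the $p=1$ case. The gap lies in the one-step decoupling estimate, which is where all the content of the proposition sits. First, freezing $F_\omega$ on a uniform grid of mesh $\epsilon$ costs an error of order $L(\ell_j-\ell_{j-1})\epsilon^\theta$, i.e.\ proportional to the number of coordinates in the block being frozen. The constant in \eqref{eq:pw_funct_corr} is $C(p,(T_\omega)_{\omega\in\Omega},\theta)$, independent of $k$ and of the times $n_r$, and this independence is essential: in the application to conditions (B2)--(B3) the function $F_\omega$ has of order $N$ arguments, so any constant growing with $k$ (or any $\epsilon$ chosen to depend on $k$, which then degrades the other term through the number of grid cells) renders the bound useless. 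Second, your decay step is not justified: after freezing, the dependence on $y$ is carried by indicators of sets of the form $\bigcap_{r}(T^{n_r}_\omega)^{-1}(\text{cell})$, and the BV norm of such an indicator (times the density $h_j$) grows like the number of monotonicity branches of $T^{n_{\ell_j}}_\omega$, i.e.\ exponentially in $n_{\ell_j}$. Applying the memory loss of Lemma \ref{eq:memory_loss} ``from time zero'' therefore gives $K\gamma_0^{\,n_{\ell_j+1}}$ multiplied by a factor of size roughly $N^{n_{\ell_j}}$ (with $N$ the branch bound from (H)(iii)), and $\gamma_0^{\,n_{\ell_j}+q}N^{n_{\ell_j}}$ is not $O(\gamma^q)$; ``optimizing $\epsilon$'' cannot repair this, since the blow-up is in the dynamical complexity of the preimage sets, not in the mesh.

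The paper's proof is built precisely to avoid these two problems. It partitions $[0,1]$ by the dynamical partition $\cA(T^{n_*}_\omega)$ at the midpoint $n_*$ of the gap, so that freezing the past coordinates to a single representative point $c_J$ of each monotonicity interval $J$ produces H\"older errors $\sum_{\alpha}(\delta^\theta)^{-(n_*-n_\alpha)}\le C\kappa^{\,n_{\ell+1}-n_\ell}$ --- a geometric series, hence independent of the block size. Then the zero-mean function $1_Jh_2-\mu_2(J)h_1$, with $J$ a single interval (so of variation $\le 2$ before push-forward), is pushed forward to time $n_*$, where $V(\cL^{n_*}_\omega(1_Jh_2))$ is controlled by the distortion quantity $V_J\bigl(1/|(T^{n_*}_\omega)'|\bigr)$ whose sum over $J\in\cA(T^{n_*}_\omega)$ is bounded by Lemma \ref{lem:ar}; only after that is the exponential memory loss applied, over the remaining half-gap $n_{\ell+1}-n_*$. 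If you want to salvage your sketch, you must replace the uniform $\epsilon$-grid and the ``decay from time zero'' by exactly this combination of the dynamical partition, the push-forward identity, and the distortion sum; as written, the central estimate is missing.
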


\begin{remark} The upper bound \eqref{eq:pw_funct_corr} is independent of $k$.

\end{remark}

The proof for Proposition \ref{lem:pw_funct_corr} is based on two auxiliary results. The first result is an immediate consequence of Corollary 8  in \cite{rousseau2016} due to Aimino and Rousseau, who considered sequential (non-random) compositions of piecewise-expanding maps. The second result is Lemma 2 in the paper \cite{Froyland_2018} by Dragi\v{c}evi\'{c} et al.

\begin{lem}\label{lem:ar} Suppose conditions (H) hold. There is $C > 0$ such that for almost every $\omega \in \Omega$,
\begin{align}\label{eq:ar}
\sum_{I \in \cA(T_\omega^n)} V_I \left( \frac{1}{|(T^n_\omega)' |} \right) \le C,
\end{align}
where $V_I(f)$ denotes the total variation of $f$ over the subinterval $I \subset [0,1]$.
\end{lem}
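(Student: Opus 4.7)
The plan is to reduce this to Corollary~8 of \cite{rousseau2016}, which establishes the same bound for \emph{sequential} (non-random) compositions of maps in $\cE$. My observation is that for each fixed $\omega\in\Omega$ the random composition $T_\omega^n=T_{\tau^{n-1}\omega}\circ\cdots\circ T_\omega$ is literally a sequential composition of the maps $T_\omega,T_{\tau\omega},\ldots,T_{\tau^{n-1}\omega}$, each of which belongs to $\cE$ and shares the uniform constants $\delta,D,N$ coming from (H)(iii). Hence their result applies pointwise-in-$\omega$ with a constant $C$ depending only on $\delta,D,N$, and in particular independent of $\omega$ and $n$.

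For the record I would reproduce the short distortion computation directly. Fixing $\omega$ and a branch $I\in\cA(T_\omega^n)$, on which $T_\omega^n$ extends to a $C^2$ diffeomorphism, the chain rule gives
\begin{equation*}
\frac{d}{dx}\log|(T_\omega^n)'(x)|=\sum_{k=0}^{n-1}\frac{T_{\tau^k\omega}''(T_\omega^k x)}{T_{\tau^k\omega}'(T_\omega^k x)}(T_\omega^k)'(x).
\end{equation*}
Using the factorization $(T_\omega^k)'(x)=(T_\omega^n)'(x)/(T_{\tau^k\omega}^{n-k})'(T_\omega^k x)$ together with the uniform expansion $(T_{\tau^k\omega}^{n-k})'\ge\delta^{n-k}$ and the uniform ratio bound $|T'_\cdot{}'|/|T'_\cdot|\le D/\delta$ from (H)(iii), the right-hand side is dominated by a geometric series, so
\begin{equation*}
\Bigl|\tfrac{d}{dx}\tfrac{1}{|(T_\omega^n)'(x)|}\Bigr|=\tfrac{1}{|(T_\omega^n)'(x)|}\Bigl|\tfrac{d}{dx}\log|(T_\omega^n)'(x)|\Bigr|\le \frac{D}{\delta(\delta-1)},
\end{equation*}
with the bound independent of $x$, $I$, $n$, and $\omega$.

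I would then finish by summing. Since $V_I(1/|(T_\omega^n)'|)$ is bounded by the $L^\infty$-norm of the derivative above times $|I|$, and the branches in $\cA(T_\omega^n)$ are pairwise disjoint subintervals of $[0,1]$,
\begin{equation*}
\sum_{I\in \cA(T_\omega^n)}V_I\!\left(\tfrac{1}{|(T_\omega^n)'|}\right)\le \frac{D}{\delta(\delta-1)}\sum_{I\in\cA(T_\omega^n)}|I|\le \frac{D}{\delta(\delta-1)}=:C.
\end{equation*}
There is no serious obstacle here beyond unwrapping the definitions; the only point to verify, beyond the sequential-case argument of Aimino--Rousseau, is that the sequence-dependent constants in their distortion estimate can be replaced by the uniform ones from (H)(iii), which is automatic since $(T_\omega)_{\omega\in\Omega}\subset\cE$ satisfies those bounds with $\omega$-independent constants.
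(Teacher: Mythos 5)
Your argument is correct, and it is a genuinely different route from the one in the paper. The paper proves the lemma by first importing the uniform Lasota--Yorke inequality for the random compositions (which rests on condition (iv), via p.~2252 of \cite{Froyland_2018}), fixing a full-measure set of $\omega$ on which it holds, and then invoking the \emph{proof} of Corollary~8 in \cite{rousseau2016}; your proof bypasses all of that with a direct distortion estimate that uses only the uniform $C^2$ and expansion constants of (H)(iii). The chain-rule identity, the factorization $|(T_\omega^k)'(x)|=|(T_\omega^n)'(x)|/|(T_{\tau^k\omega}^{n-k})'(T_\omega^k x)|\le |(T_\omega^n)'(x)|\,\delta^{-(n-k)}$, and the resulting uniform bound $\bigl|\bigl(1/|(T_\omega^n)'|\bigr)'\bigr|\le D/(\delta(\delta-1))$ are all correct, and summing $V_I\le \sup_I\bigl|\bigl(1/|(T_\omega^n)'|\bigr)'\bigr|\,|I|$ over the pairwise disjoint branches gives the explicit constant $C=D/(\delta(\delta-1))$, independent of $n$ and valid for \emph{every} $\omega$ (not merely almost every one), and without ever using condition (iv). Two small remarks: first, your opening reduction to Corollary~8 of \cite{rousseau2016} is not needed and is slightly glib as stated, since running that corollary's proof is exactly where the paper has to inject the Lasota--Yorke input; your self-contained computation is what carries the proof, so you could drop the citation entirely. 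Second, on an element $I$ of the minimal partition $\cA(T_\omega^n)$ the orbit may hit branch endpoints of intermediate maps at finitely many points, so the chain-rule bound is literally available only off a finite subset of $I$; since $T_\omega^n$ extends to a $C^2$ map on a neighborhood of $I$, the derivative of $1/|(T_\omega^n)'|$ is continuous on $I$ and the bound extends everywhere on $I$, so $V_I=\int_I\bigl|\bigl(1/|(T_\omega^n)'|\bigr)'\bigr|\,dm\le D\,|I|/(\delta(\delta-1))$ — worth a sentence, but not a gap. The trade-off between the two approaches: yours is more elementary, gives an explicit $\omega$- and $n$-independent constant under weaker hypotheses, while the paper's argument recycles machinery ((iv), the Lasota--Yorke bound, Aimino--Rousseau) that is needed elsewhere anyway and would also cover piecewise expanding maps whose branches are not uniformly $C^2$.
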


\begin{proof} As is explained on p. 2252 of \cite{Froyland_2018}, condition (iv) implies that there exists $\alpha^r \in (0,1)$ and $K^r  > 0$ such that, for almost every $\omega \in \Omega$, 
\begin{align}\label{eq:lasota_yorke}
V(\cL_{\omega}^{\ell r} \phi ) \le ( \alpha^r )^\ell V(\phi) + \frac{K^r}{1 - \alpha^r} \Vert \phi \Vert_{L^1(m)}
\end{align}
holds for all $\phi \in BV$ and $\ell \ge 1$. It suffices to fix $\Omega^* \subset \Omega$ with $\bP(\Omega^*) = 1$ such that \eqref{eq:lasota_yorke} holds for all $\omega \in \Omega^*$. Then the proof of Corollary 8 in \cite{rousseau2016} shows that \eqref{eq:ar} holds for all $\omega \in \Omega^*$.
\end{proof}

\begin{lem}[See Lemma 2 in \cite{Froyland_2018}]\label{eq:memory_loss} Assume conditions (H). There is $K > 0$ and $\eta \in (0,1)$ such that, for almost every $\omega \in \Omega$,
\begin{align*}
\Vert \cL_\omega^n \phi \Vert_{BV} \le K \eta^n \Vert \phi \Vert_{BV}
\end{align*}
holds for all $n \ge 0$ and $\phi \in BV$ with $m(\phi) = 0$.

\end{lem}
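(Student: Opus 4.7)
The plan is to establish the exponential $BV$-decay in three steps: (i) upgrade the inequality \eqref{eq:lasota_yorke}, which only covers iterates that are multiples of $r$, into a uniform Lasota--Yorke bound for every $n$; (ii) prove exponential $L^1$-contraction $\Vert \cL_\omega^n \phi\Vert_{L^1(m)} \le C\kappa^n \Vert\phi\Vert_{BV}$ for $\phi$ with $m(\phi)=0$; (iii) interpolate these two ingredients to obtain exponential decay in the $BV$-norm.

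For step (i), I would write $n = \ell r + s$ with $0 \le s < r$ and compose \eqref{eq:lasota_yorke} for $\cL_\omega^{\ell r}$ with the one-step bound $V(\cL_\omega \phi) \le A(V(\phi) + \Vert\phi\Vert_{L^1(m)})$, which follows from condition (H)(iii) together with Lemma \ref{lem:ar}. Since $\cL_\omega$ is a contraction on $L^1(m)$, iterating gives $V(\cL_\omega^n \phi) \le \widetilde C \widetilde\alpha^n V(\phi) + \widetilde C \Vert\phi\Vert_{L^1(m)}$ for some $\widetilde\alpha \in (0,1)$ and $\widetilde C > 0$ uniform in $\omega$. For step (ii) — the heart of the argument — I would use a cone-contraction argument in the spirit of Liverani and Birkhoff--Hopf, adapted to the random cocycle. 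Namely, for $a > 0$ large enough, condition (H)(v) combined with the uniform Lasota--Yorke bound of step (i) and Lemma \ref{lem:froy_1} implies that, for almost every $\omega$, the cone
\[
\cC_a = \{ \phi \in BV : \phi \ge 0, \ V(\phi) \le a \, m(\phi) \}
\]
is mapped by $\cL_\omega^{n_0}$, for some deterministic $n_0$, into a subset of finite Hilbert projective diameter inside $\cC_a$. The Birkhoff--Hopf theorem then gives strict contraction of the Hilbert metric by a uniform factor $\kappa_0 < 1$, which translates into $L^1$-contraction between normalised densities in $\cC_a$. Decomposing a zero-mean $\phi \in BV$ as $\phi = \phi_+ - \phi_-$ and using step (i) to push both pieces into the cone after a bounded number of iterations — at the price of a prefactor controlled by $\Vert\phi\Vert_{BV}$ — yields the asserted exponential $L^1$-contraction.

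Finally, for step (iii), split $\cL_\omega^n = \cL_{\tau^{\lfloor n/2 \rfloor} \omega}^{\lceil n/2 \rceil} \circ \cL_\omega^{\lfloor n/2 \rfloor}$, observe that $\cL_\omega^{\lfloor n/2\rfloor}\phi$ still has zero $m$-integral, and apply step (i) to the outer operator:
\[
V(\cL_\omega^n \phi) \le \widetilde C \widetilde\alpha^{\lceil n/2 \rceil} V(\cL_\omega^{\lfloor n/2\rfloor} \phi) + \widetilde C \Vert \cL_\omega^{\lfloor n/2\rfloor} \phi \Vert_{L^1(m)}.
\]
Bounding the first term by step (i) applied to $\phi$ (so $V(\cL_\omega^{\lfloor n/2\rfloor}\phi) \lesssim \Vert\phi\Vert_{BV}$) and the second by step (ii) produces $V(\cL_\omega^n \phi) \le K' \eta^n \Vert\phi\Vert_{BV}$ for suitable $K'$ and $\eta \in (0,1)$; combining this with step (ii) itself gives the stated $BV$-bound. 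The principal difficulty is step (ii): the cone contraction must be realised uniformly in $\omega$, which is where the covering condition (H)(v) and the uniform expansion/partition conditions (H)(iii)--(iv) are crucial, and where all of the randomness-specific work is concentrated. The other two steps are essentially bookkeeping once a uniform Lasota--Yorke inequality is available.
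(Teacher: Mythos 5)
The paper does not actually prove this lemma: it is imported verbatim, citation and all, from Lemma 2 of \cite{Froyland_2018}, so there is no internal argument to compare yours against. Your reconstruction --- a uniform-in-$n$ Lasota--Yorke bound obtained by combining \eqref{eq:lasota_yorke} with a crude one-step estimate (which does follow from (H)(iii), since $\sum_{I\in\cA(T_\omega)}\sup_I|\phi|\le N\,(V(\phi)+\Vert\phi\Vert_{L^1(m)})$ and $V_I(1/|T_\omega'|)\le D\delta^{-2}m(I)$; the paper itself also records $\Vert\cL_\omega\phi\Vert_{BV}\le C\Vert\phi\Vert_{BV}$), then Birkhoff--Hopf cone contraction with the covering condition (H)(v) supplying a uniform-in-$\omega$ finite-diameter image of $\cC_a$ after a deterministic $n_0$, then interpolation --- is the standard route for random Lasota--Yorke cocycles (essentially Buzzi's argument, and in the spirit of what \cite{Froyland_2018} do), and it buys a self-contained proof where this paper simply outsources the result.

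One step is not right as literally stated. In your step (ii) you cannot push $\phi_\pm$ into $\cC_a$ ``after a bounded number of iterations'': since $\Vert\phi_\pm\Vert_{L^1(m)}=m(\phi_\pm)$, the uniform Lasota--Yorke bound only gives $V(\cL_\omega^n\phi_\pm)/m(\cL_\omega^n\phi_\pm)\le \widetilde C\widetilde\alpha^{\,n}\,V(\phi_\pm)/m(\phi_\pm)+\widetilde C$, so the waiting time is of order $\log\bigl(V(\phi_\pm)/m(\phi_\pm)\bigr)$, which is unbounded over zero-mean $\phi$ with $\Vert\phi\Vert_{BV}\le 1$. The standard repair is either to absorb this logarithmic waiting time into the prefactor by a case analysis on $\Vert\phi\Vert_{L^1(m)}/\Vert\phi\Vert_{BV}$, or, more cleanly, to write $\phi=(\phi_++\beta)-(\phi_-+\beta)$ with $\beta\simeq a^{-1}\Vert\phi\Vert_{BV}$ a constant function (or a multiple of $h_\omega$ from Lemma \ref{lem:froy_1}): both summands then lie in $\cC_a$ at once, have equal mass because $m(\phi_+)=m(\phi_-)$, and the Hilbert-metric contraction yields $\Vert\cL_\omega^n\phi\Vert_{L^1(m)}\le C\kappa^n\Vert\phi\Vert_{BV}$, after which your step (iii) goes through unchanged. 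It is also worth making explicit why $n_0$ can be taken deterministic: cone functions dominate a multiple of their mass on an interval whose length depends only on $a$, one applies (H)(v) to a finite family of such intervals (so $k$ is independent of $\omega$ outside a null set), and the resulting pointwise lower bound on $\cL_\omega^{n_0}\phi$ is uniform by (H)(iii); this is exactly where the uniformity in $\omega$ enters.
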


\begin{proof}[Proof for Proposition \ref{lem:pw_funct_corr}] The proof proceeds by induction on $p$. First let $p = 1$ and denote $\ell_1 = \ell$. Then the function $H_\omega(x,y)$ in Proposition \ref{lem:pw_funct_corr} becomes
\begin{align*}
H_\omega(x,y) = F_\omega( T^{n_1}_\omega(x), \ldots, T^{n_\ell}_\omega(x), T^{n_{\ell + 1}}_\omega(y), \ldots, T^{n_k}_\omega(y) ),
\end{align*}
where $n_{1} < \ldots < n_{\ell} \le n_{\ell + 1} < \ldots < n_k$.  Set \(n_* = n_\ell + \round{(n_{\ell+1} - n_\ell)/2}\)\footnote{We denote by \(\round{x}\) the greatest non-negative integer \(n\) with \(n \le x\).}. Then,
\begin{align*}
\int H_\omega(x,x) \, d \mu_1(x) &= \iint H_\omega(x,y) \, d\mu_1(x) \, d\mu_2(y)  \\
&=  \sum_{J \in \cA(T_\omega^{n_*} ) } \int_{J} \left( H(x,x) - \int H(x,y) \, d\mu_2(y) \right) \, d\mu_1(x).
\end{align*}

\noindent\textbf{Claim.}  If \(a,b \in J \in  \cA(T_\omega^{n_*}) \), then almost surely
\begin{align}\label{eq:funct_pw_claim}
&\left|H_\omega(a,x) - \int H_\omega(a,y) \, d\mu_2(y) - \left(H_\omega(b,x) - \int H_\omega(b,y) \, d\mu_2(y)\right) \right| 
\le L_1 C \kappa^{n_{\ell + 1} - n_{\ell}},
\end{align}
where \(L_1 = \text{ess sup}_{\omega \in \Omega} \max_{1 \le\alpha \le \ell} [F_\omega]_{\theta,\alpha} \), $\kappa = (\delta^{\theta})^{-1/2} \in (0,1)$, and $C = C(\kappa) > 0$. We recall that by definition  $\delta = \inf_{\omega \in \Omega} \delta(T_\omega) > 1$.

\begin{proof}[Proof for Claim] Since $F_\omega$ is $\theta$-Hölder continuous for a.e. $\omega \in \Omega$ in the first $\ell$ coordinates,
\begin{align*}
|F_\omega(a_1,\ldots, a_{\ell}, x,\ldots, x) - F_\omega(b_1,\ldots, b_{\ell}, x,\ldots, x)| \le L_1 \sum_{\alpha =1}^{\ell} |a_{\alpha} - b_{\beta}|^{\theta}
\end{align*}
holds for a.e. $\omega \in \Omega$. Consequently,
\begin{align*}
&|H_\omega(a,x) - H_\omega(b,x) | \le L_1 \sum_{\alpha =1}^{\ell} | T^{n_\alpha}_\omega(a) - T^{n_\alpha}_\omega (b)|^{\theta} \leq L_1 \sum_{\alpha=1}^{\ell} m(T^{n_\alpha}_\omega J)^\theta . 
\end{align*}

For each $1 \le \alpha \le \ell$,  $T_{\tau^{n_\alpha}\omega}^{n_* - n_\alpha}$ maps $T_\omega^{n_\alpha}(J)$ diffeomorphically onto $T^{n_*}_\omega(J)$, which implies the upper bound $m(T^{n_\alpha}_\omega J) \le (\delta^{n_* - n_\alpha})^{-1}  m(T^{n_*}_\omega J) \le \delta^{-n_* + n_\alpha}$. That is, for a.e. $\omega \in \Omega$,
\begin{align*}
|H_\omega(a,x) - H_\omega(b,x) | \le \sum_{\alpha=1}^{\ell} (\delta^\theta)^{-n_* + n_\alpha} \le \sum_{k = n_* - n_{\ell}}^{\infty} \kappa^k \le  C(\kappa) \kappa^{n_{\ell + 1} - n_{\ell}}.
\end{align*}
This proves the claim.
\end{proof}

We fix a point $c_J \in J$ for each $J \in \cA(T^{n_*}_\omega)$. Then  \eqref{eq:funct_pw_claim} implies for a.e. $\omega \in \Omega$ the upper bound
\begin{align}
&\left| \sum_{J \in \cA(T^{n_*}_\omega) } \int_{J} \left( H_\omega(x,x) - \int H_\omega(x,y) \, d\mu_2(y) \right) \, d\mu_1(x) \right| \notag \\
&\le \left| \sum_{J \in \cA(T^{n_*}_\omega) } \int_{J} \left( H_\omega(c_{J},x) - \int H_\omega(c_{J},y) \, d\mu_1(y) \right) \, d\mu_2(x) \right| \label{eq:pw_funct_corr_eq1} \\
&+ L_1 C \kappa^{n_{\ell + 1} - n_{\ell}}. \notag
\end{align}
Let \(h_1 \in BV\) denote the density of \(\mu_1\), and let \(h_2 \in BV\) denote the density of \(\mu_2\). Moreover, let \(\widetilde{H}_\omega(c_{J},x)\) be the function that satisfies \(\widetilde{H}_\omega(c_{J}, T^{n_{\ell+1}}_\omega(x)) = H_\omega(c_J, x) \). Fix $J \in \cA(T^{n_*}_\omega)$. Then, for a.e. $\omega \in \Omega$,
\begin{align*}
&\left| \int_{J} \left( H_\omega(c_{J},x) - \int H_\omega(c_{J},y) \, d\mu_1(y) \right) d \mu_2(x) \right|  \\
&= \left| \int_0^1 \left( H_\omega(c_{J},x) - \int H_\omega(c_{J},y) \, d\mu_1(y) \right)( 1_J(x) h_2(x) - \mu_2(J)h_1(x)  ) \, dm(x) \right| \notag \\
&= \left| \int_0^1 \left( H_\omega(c_{J},x) - \int H_\omega(c_{J},y) \, d\mu_1(y) \right) \cL_{\omega}^{n_{\ell+1}} ( 1_J(x) h_2(x) - \mu_2(J)h_1(x)  ) \, dm(x) \right| \notag \\
&\le 2 \, \text{ess sup}_{\omega \in \Omega} \Vert F_\omega \Vert_{\infty} \Vert \cL_{\omega}^{n_{\ell+1}} ( 1_J h_2 - \mu_2(J)h_1 ) \Vert_{L^1(m)}. \notag
\end{align*}

Let $x \in [0,1]$. Since $J \in \cA(T^{n_*}_\omega)$, either $x \in T^{n_*}_\omega(J)$ and
\begin{align*}
\cL_{\omega}^{n_*} (1_J h_2)x = \frac{h_2( ( T^{n_*}_\omega \upharpoonright J )^{-1} x ) }{| (T^{n_*}_\omega)'( T^{n_*}_\omega \upharpoonright J )^{-1} x) |},
\end{align*}
or $\cL_{\omega}^{n_*} (1_J h_2)x = 0$. It follows easily from this and the strict monotonicity of $T^{n_*}_\omega \upharpoonright J$ that
\begin{align*}
V( \cL_{\omega}^{n_*} (1_J h_2) ) \le 2 \Vert h_2 \Vert_{\infty}  \sup_{y \in J} \frac{1}{|(T^{n_*}_\omega)' y|} + \Vert h \Vert_{\infty}  V_J \left( \frac{1}{|(T^{n_*}_\omega)' |} \right) +  V_J(h_2) \sup_{y \in J} \frac{1}{|(T^{n_*}_\omega)' y|}, 
\end{align*}
where 
\begin{align*}
\sup_{y \in J} \frac{1}{|(T^{n_*}_\omega)' y|} \le V_J \left( \frac{1}{|(T^{n_*}_\omega)' |} \right) +  \int_0^1 \frac{1}{|(T^{n_*}_\omega)'( T^{n_*}_\omega \upharpoonright J )^{-1} y|} \, dy \le V_J \left( \frac{1}{|(T^{n_*}_\omega)' |} \right) + m(J).
\end{align*}
We conclude that
\begin{align}\label{eq:var_1}
V( \cL_{\omega}^{n_*} (1_J h_2) ) \le 6 \Vert h_2 \Vert_{BV} \left(  V_J \left( \frac{1}{|(T^{n_*}_\omega)' |} \right)  + m(J) \right).
\end{align}
On the other hand there is $C > 0$ such that, for any $\phi \in BV$, $\sup_{k \ge 0} V(\cL^k_\omega(\phi)) \le C \Vert \phi \Vert_{BV}$ holds for almost every $\omega \in \Omega$. This follows from \eqref{eq:lasota_yorke} together with the fact that $\Vert \cL_\omega(\phi) \Vert_{BV} \le C \Vert \phi \Vert_{BV}$ for almost every $\omega \in \Omega$; see p. 2257 of \cite{Froyland_2018}. In particular, 
\begin{align}\label{eq:var_2}
V(  \cL_{\omega}^{n_*} (\mu_2(J) h_1) ) \le \mu_2(J) C \Vert h_1 \Vert_{BV} \hspace{0.5cm} \text{a.e. $\omega \in \Omega$.}
\end{align}

Next we combine Lemma \ref{eq:memory_loss}, \eqref{eq:var_1} and \eqref{eq:var_2} to obtain
\begin{align*}
&\Vert \cL_{\omega}^{n_{\ell+1}} ( 1_J h_2 - \mu_2(J)h_1 ) \Vert_{L^1(m)} \\
&\le K \eta^{n_{\ell + 1} - n_*} \Vert \cL_{\omega}^{n_*} ( 1_J h_2 - \mu_2(J)h_1 ) \Vert_{BV} \\
&\le C \eta_1^{n_{\ell + 1} - n_{\ell}} ( \Vert h_1 \Vert_{BV} + \Vert h_2 \Vert_{BV}  ) \left( V_J \left( \frac{1}{|(T^{n_*}_\omega)' |} \right)  + m(J) + \mu_2(J) \right),
\end{align*}
for a.e. $\omega \in \Omega$, where $\eta_1 \in (0,1)$. Then, by Lemma \ref{lem:ar},
\begin{align*}
\eqref{eq:pw_funct_corr_eq1} &\le \sum_{J \in \cA(T^{n_*}_\omega  )} 2 \, \text{ess sup}_{\omega \in \Omega} \Vert F_\omega \Vert_{\infty}  \Vert \cL_{\omega}^{n_{\ell+1}} ( 1_J h_2 - \mu_2(J)h_1 ) \Vert_{L^1(m)} \\
&\le \sum_{J \in \cA(T^{n_*}_\omega  )} \text{ess sup}_{\omega \in \Omega} \Vert F_\omega \Vert_{\infty} C \eta_1^{n_{\ell + 1} - n_{\ell}} ( \Vert h_1 \Vert_{BV} + \Vert h_2 \Vert_{BV}  ) \\
& \times \left( V_J \left( \frac{1}{|(T^{n_*}_\omega)' |} \right)  + m(J) + \mu_2(J) \right) \\
&\le C\, \text{ess sup}_{\omega \in \Omega} \Vert F_\omega \Vert_{\infty} C \eta_1^{n_{\ell + 1} - n_{\ell}} ( \Vert h_1 \Vert_{BV} + \Vert h_2 \Vert_{BV}  ),
\end{align*}
for a.e. $\omega \in \Omega$. Taking $\gamma = \max \{ \eta_1, \kappa \}$ completes the proof for the case $p = 1$.

Suppose that we have shown \eqref{eq:pw_funct_corr} for $p-1$, and fix integers \(0 = \ell_0 < \ell_1 < \ldots < \ell_p < \ell_{p+1} = k\) as in the proposition.  Recall that  \(H_{\omega}(x_1,\ldots, x_{p+1})\) denotes the function
\begin{align*}
F_\omega(T^{n_1}_{\omega} (x_1),\ldots, T^{n_{\ell_1}}_{\omega} (x_1), T^{n_{{l_1}+1}}_\omega (x_2),\ldots, T^{n_{{l_2}}}_\omega (x_2), \ldots, T^{n_{l_p+1}}_\omega (x_{p+1}),\ldots, T^{n_k}_\omega (x_{p+1})).
\end{align*}

From the case \(p=1\) we know that, for a.e. $\omega \in \Omega$,
\begin{align}
&\left| \int H_\omega (x,\ldots,x) \, d\mu_1(x)  - \iint H_\omega(x,\ldots x,x_{p+1}) \, d\mu_1 (x) \, d\mu_{p+1} (x_{p+1}) \right| \notag \\
&\le C (\text{ess sup}_{\omega \in \Omega} \Vert F_\omega \Vert_{\infty}  + L ) \left(  \Vert h_1 \Vert_{BV} + \Vert h_{p+1} \Vert_{BV}  \right)  \gamma^{n_{\ell_p + 1} - n_{\ell_p}}  \label{eq:est1}
\end{align}
where $h_i$ is the density of $\mu_i$.

 Next for each \(x_{p+1} \in [0,1]\), we apply the induction hypothesis to the function 
 \begin{align*}
 (y_1,\ldots,y_k) \mapsto F_\omega(y_1,\ldots,y_{\ell_p}, T^{n_{\ell_p+1}}_\omega (x_{p+1}),\ldots, T^{n_k}_\omega (x_{p+1})).
 \end{align*}
This implies for a.e. $\omega \in \Omega$ the upper bound
\begin{align}
&\left| \int H_\omega (x,\ldots x,x_{p+1}) \, d\mu_1 (x) - \idotsint H_\omega(x_1,\ldots,x_p,x_{p+1}) \, d\mu_1(x_1) \, \ldots d\mu_{p}(x_{p}) \right| \notag \\
&\le C (\text{ess sup}_{\omega \in \Omega} \Vert F_\omega \Vert_{\infty}  + L ) \left( \sum_{i=1}^{p} \Vert h_i \Vert_{BV} \right)  \sum_{i=1}^{p-1} \gamma^{n_{\ell_i + 1} - n_{\ell_i}} , \label{eq:est2}
\end{align}
for all \(x_{p+1} \in [0,1]\). Now, to complete the proof for Proposition \ref{lem:pw_funct_corr},  it suffices to combine \eqref{eq:est1} and \eqref{eq:est2}. \\
\end{proof}

\subsection{Proof for Theorem \ref{thm:pw}}  It was shown in \cite{Froyland_2018} that there exists a non-random $\sigma^2 \ge 0$ such that 
\begin{align*}
\sigma^2 = \lim_{n \to \infty} \mu_{\omega} \left[ \left( \frac{1}{\sqrt{n}} \sum_{k=0}^{n-1} \widetilde{f}_{\tau^k\omega} \circ T^k_\omega \right)^2 \right]
\end{align*}
for almost every $\omega \in \Omega$. Moreover, $\sigma^2 = 0$ if and only if there exists $g \in L^2(\Omega \times X, \mu)$ such that $\widetilde{f} = g - g \circ \varphi$. Hence, under our assumption there exists $C > 0$ and $n_0 \ge 1$ such that, for a.e. $\omega \in \Omega$,
\begin{align*}
b^2 = \mu_{\omega} \left[ \left( \sum_{k=0}^{n-1} \widetilde{f}_{\tau^k\omega} \circ T^k_\omega \right)^2 \right] \ge Cn 
\end{align*}
holds for all $n \ge n_0$. 

Next we show that, with $\mu_\omega$ as the initial measure, conditions (B1)-(B3) hold with $\rho(m) = \gamma^m$ for a.e. $\omega \in \Omega$, where $\gamma \in (0,1)$ is the same as in  Proposition \ref{lem:pw_funct_corr}. To this end recall that, by Lemma \ref{lem:froy_1}, the density $h_\omega$ of $\mu_\omega$ lies in $BV$ for a.e. $\omega \in \Omega$.

(B1): For brevity, we introduce the notation $\widetilde{f}_\omega^n = f \circ T_\omega^n - \mu_\omega(f \circ T_\omega^n)$. Taking $k = 1$, $p = 1$, $F_\omega(x,y) = f(x)f(y)$, and $\mu_1 = \mu_\omega = \mu_2$ in Proposition  \ref{lem:pw_funct_corr}  yields the upper bound 
\begin{align}\label{eq:b1_pw}
| \mu_\omega( \widetilde{f}_\omega^n   \widetilde{f}_\omega^m   ) | \le C \Vert f \Vert_{\text{Lip}} \text{ess sup}_{\omega \in \Omega} \Vert h_\omega \Vert_{BV} \gamma^{|n-m|},
\end{align}
for a.e. $\omega \in \Omega$.

(B2): Let $m \le k \le N-1$ and let $G : \bR \times B_1(0, 4\Vert f \Vert_{\infty} + 1 ) \to \bR$ be a bounded Lipschitz continuous function. We define $F_\omega(x_0, \ldots, x_{n-k}, x_{n-m}, x_n, x_{n+m}, x_{n+k}, \ldots,  x_{N-1} )$ by the formula
\begin{align*}
\psi_{n, \omega}(x_n) G\left(   \sum_{| i -n | > k } \psi_{i, \omega}(x_i),  \sum_{| i - n | = k} \psi_{i, \omega}(x_i)   \right) \sum_{| i - n | = m} \psi_{i, \omega}(x_i),
\end{align*}
where $\psi_{i,\omega}(x) = f(x) - \mu_\omega(f \circ T_\omega^i)$ and the summations are over $i$. Then
\begin{align*}
&\mu_\omega \left[  F_\omega(  T_\omega^0, \ldots, T_\omega^{n-k}, T_\omega^{n-m}, T_\omega^n, T_\omega^{n+m}, T_\omega^{n+k}, \ldots,  T_\omega^{N-1}) \right] \\
&=  \mu_\omega \left[  \widetilde{f}_\omega^n    G \left( \sum_{|i - n| > k}\widetilde{f}_\omega^i , \sum_{|i-n|=k} \widetilde{f}_\omega^i  \right) \sum_{|i-n|=m} \widetilde{f}_\omega^i     \right],
\end{align*}
which is the integral we need to control. It is easy to verify that $F_{\omega}$ is Lipschitz continuous with
\begin{align*}
\sup_{\omega \in \Omega} \Vert F_\omega \Vert_{\infty} \le  \Vert G \Vert_{\infty} 4 \Vert f \Vert^2_\infty
\end{align*}
and
\begin{align*}
\sup_{\omega \in \Omega} \sup_{\beta \in \cI} [F_\omega]_{1, \beta} \le 8 \Vert f \Vert_{\text{Lip}}^3 (  \Vert G \Vert_\infty + \text{Lip}(G) ),
\end{align*}
where $\cI = \{ 0 \le i \le N-1 \: : \: |i-n| \ge k \} \cup \{ 0 \le i \le N-1 \: : \: |i-n| = m \} \cup \{n\} $ is an indexing for the arguments of $F$. Observe that, since $\mu_\omega( \widetilde{f}_\omega^n) =0 $,
\begin{align*}
\iiint   F_\omega(  \cT_\omega^{ \le n-k}x, T_\omega^{n-m}x, T_\omega^ny, T_\omega^{n+m}z,   \cT^{\ge n + k}z )  \,  d \mu_\omega(x) \,  d \mu_\omega(y)  \, d \mu_\omega(z) = 0,
\end{align*}
where $\cT_\omega^{ \le n-k}x = ( T^0_\omega x, \ldots, T^{n-k}_\omega x )$ and $\cT_\omega^{ \ge n + k}z = ( T^{n+k}_\omega z, \ldots, T^{N-1}_\omega z )$. It follows by Proposition \ref{lem:pw_funct_corr} applied with $F_\omega$ and $p=2$  that, for a.e. $\omega \in \Omega$,
\begin{align*}
&\left| \mu_\omega \left[  \widetilde{f}_\omega^n    G \left( \sum_{|i - n| > k}\widetilde{f}_\omega^i , \sum_{|i-n|=k} \widetilde{f}_\omega^i  \right) \sum_{|i-n|=m} \widetilde{f}_\omega^i     \right] \right| \\
&\le C \left( \sup_{\omega \in \Omega} \Vert F_\omega \Vert_{\infty} +  \sup_{\omega \in \Omega} \sup_{\beta \in \cI } [F_\omega]_{1, \beta}   \right) \,  \text{ess sup}_{\omega \in \Omega} \Vert h_\omega \Vert_{BV} \gamma^{m} \\
&\le C \biggl(  \Vert G \Vert_{\infty} 4 \Vert f \Vert^2_\infty +  8 \Vert f \Vert_{\text{Lip}}^3 (  \Vert G \Vert + \text{Lip}(G)   ) \biggr) \,  \text{ess sup}_{\omega \in \Omega} \Vert h_\omega \Vert_{BV} \gamma^{m} \\
&\le C (  \Vert G \Vert_\infty + \text{Lip}(G) ) (  \Vert f \Vert^2_\infty +  \Vert f \Vert_{\text{Lip}}^3 )\, \text{ess sup}_{\omega \in \Omega} \Vert h_\omega \Vert_{BV} \gamma^{m}.
\end{align*}

(B3): This is obtained in the same way as condition (B2). Namely, whenever $2m \le k \le N-1$, applying Proposition \ref{lem:pw_funct_corr} with $p=2$ and the function
\begin{align*}
\psi_{n, \omega}(x_n) G_* \left(   \sum_{| i -n | > k } \psi_{i, \omega}(x_i),  \sum_{| i - n | = k} \psi_{i, \omega}(x_i)   \right) \sum_{| i - n | = m} \psi_{i, \omega}(x_i),
\end{align*}
where
\begin{align*}
 G_*(x,y) = G(x,y) -   \mu_\omega \left[ G \left( \sum_{|i - n| > k} \bar{f}^i, \bar{f}^{n,k}   \right) \right],
\end{align*}
implies for a.e. $\omega \in \Omega$ the upper bound
\begin{align*}
&\left| \mu_\omega \left[  \widetilde{f}_\omega^n   \sum_{|i-n|=m} \widetilde{f}_\omega^i   \,  G_1 \left( \sum_{|i - n| > k}\widetilde{f}_\omega^i , \sum_{|i-n|=k} \widetilde{f}_\omega^i  \right)   \right] \right| \\
&\le C (  \Vert G \Vert_\infty + \text{Lip}(G) ) (  \Vert f \Vert^2_\infty +   \Vert f \Vert_{\text{Lip}}^3 ) \, \text{ess sup}_{\omega \in \Omega} \Vert h_\omega \Vert_{BV} \gamma^{m}.
\end{align*}

Since $\sum_{m=1}^{\infty} m \gamma^m < \infty$, Theorem \ref{thm:pw} now follows by Theorem \ref{thm:main_1d}.\\

\begin{remark} Another example of a random dynamical system that satisfies the conditions of Theorem \ref{thm:main_1d}  is the Sinai Billiard of \cite{stenlund2014}, in which a scatterer configuration on the torus is randomly updated between consecutive collisions. The key technical lemmas necessary for obtaining an analog of Proposition \ref{lem:pw_funct_corr} were proven in  \cite{stenlund2014, SYZ_2013}, including a statistical memory loss starting from an initial measure supported on a single homogeneous local unstable manifold (Lemma 12 of \cite{stenlund2014}), and a tail estimate on the prevalence of short local unstable manifolds (Lemma 13 of \cite{stenlund2014}). The application would imply a rate of convergence in the annealed CLT but we will not treat it here.
\end{remark} 

\section{Application II: intermittent maps }\label{sec:int} 

Following \cite{liverani1999} we define for each \(\alpha \in (0,1)\) the map \(T_{\alpha} : [0,1] \to [0,1]\) by
\begin{align*}
T_{\alpha }(x) = \begin{cases} x(1+ 2^{\alpha }x^{\alpha}) & \forall x \in [0, 1/2),\\
2x-1 & \forall x \in [1/2,1].
 \end{cases} \hspace{0.5cm}
\end{align*} 

Associated to each map \(T_{\alpha}\) is its transfer operator \(\cL_{\alpha} : L^1(m) \to L^1(m)\) defined by
\begin{align*}
\cL_{\alpha} h(x) = \sum_{y \in T_{\alpha}^{-1}\{x\}} \frac{h(y)}{T'_{\alpha}(y)}.
\end{align*}
\indent We denote by \(d \hat{\mu}_{\alpha} = \hat{h}_{\alpha} dm\) the invariant absolutely continuous probability measure associated to ~\(T_{\alpha}\). It follows from \cite{liverani1999} that the density \(\hat{h}_{\alpha}\) belongs to the convex cone of functions
\beqn
\begin{split}
\cC_*(\alpha) = \{f\in C((0,1])\cap L^1\,:\, & \text{$f\ge 0$, $f$ decreasing,} 
\\
& \text{$x^{\alpha+1}f$ increasing, $f(x)\le 2^{\alpha} (2 + \alpha) x^{-\alpha} m(f)$}\}.
\end{split}
\eeqn
We recall from \cite{liverani1999,aimino2015} that 
\begin{align*}
0 < \alpha \le \beta \hspace{0.2cm}  \Rightarrow \hspace{0.2cm} \cC_*(\alpha) \subset \cC_*(\beta),
\end{align*}
and that
\begin{align*}
0 < \alpha \le \beta\hspace{0.2cm}  \Rightarrow \hspace{0.2cm}  \cL_{\alpha} \cC_*(\beta) \subset  \cC_*(\beta).
\end{align*}

\subsection{Sequential compositions} First we consider sequential compositions 
\begin{align*}
\widetilde{T}_n = T_{\alpha_n} \circ \cdots \circ T_{\alpha_1}
\end{align*}
of intermittent maps with parameters $0 < \alpha_n \le \beta_* < 1$. The notation below is adapted from Section \ref{sec:self_rate}: $\mu$ is a Borel probability measure on $[0,1]$;  $g^n : [0,1] \to \bR^d$ is a bounded observable for all $n \ge 1$; 
\begin{align*}
W &= \sum_{i=0}^{N-1} b^{-1} \bar{f}^i; \hspace{0.5cm} b =  \left[ \operatorname{Cov}_\mu \left(\sum_{i=0}^{N-1} \bar f^i \right) \right]^{1/2}; \hspace{0.5cm} \bar{f}^n = g^n \circ \widetilde{T}_n - \mu(g^n \circ \widetilde{T}_n ); \\
&\lambda_{\min} = \text{the least eigenvalue of }  \operatorname{Cov}_\mu \left(\sum_{i=0}^{N-1} \bar f^i \right). \\
\end{align*}

For a Lipschitz continuous function $g : [0,1] \to \bR^d$ we set $\Vert g \Vert_{\text{Lip}} = \Vert g \Vert_{\infty} + \text{Lip}(g)$, where
\begin{align*}
\Vert g \Vert_\infty = \sup_{x \in [0,1]} \Vert g(x) \Vert
\end{align*}
and
\begin{align*}
\text{Lip}(g) = \sup_{x \neq y} \frac{ \Vert g(x) - g(y) \Vert }{|x-y|}. 
\end{align*}\medskip

\begin{thm}\label{thm:int_sequential} Let $N \ge 1$ and  let $\mu$ be a measure whose density lies in the cone $\cC_*(\beta_*)$. Suppose that $g^n : [0,1] \to \bR^d$ are Lipschitz continuous with $\sup_{n < N}\Vert g^n \Vert_{\textnormal{Lip}} + 1 \le L$ and that $\lambda_{\min} > 1$. Denote by $Z \sim \cN(0,I_{d\times d})$ a standard normal random vector.
\begin{itemize}
\item[(1)]{If $\beta_* < 1/3$, then there is $C_* = C_*( L, d, \beta_* ) > 0$ such that
\begin{align*}
d_{\mathscr{W}}(W,Z) \le C_* N  (1 + \log N) \lambda_{\min}^{-\frac32}.
\end{align*}
In particular, if $ \lambda_{\min} \gg N^{2/3} (\log N)^{2/3}$, then $W \stackrel{d}{\to}  Z$ as $N \to \infty$. } \smallskip

\item[(2)]{If $1/3 \le \beta_* < 2/5$, then for any $\delta > 0$ there is $C_* = C_*( L, d, \beta_*,\delta ) > 0$ such that 
\begin{align}\label{eq:large_param}
d_{\mathscr{W}}(W,Z) \le C_* N^{4 - \frac{1}{\beta_*} + \delta } \lambda_{\min}^{-\frac32}.
\end{align}
In particular,  if $ \lambda_{\min} \gg N^{8/3 + 2\delta/3 - 2/3\beta_*} $, then $W \stackrel{d}{\to}  Z$ as $N \to \infty$. 
}

\end{itemize}

\end{thm}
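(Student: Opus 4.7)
The plan is to deduce Theorem~\ref{thm:int_sequential} from Theorem~\ref{thm:main_self} by verifying conditions (C1)--(C3) with the polynomial rate $\rho(m) = C\, m^{-(1/\beta_*-1)}$, and then evaluating the resulting sum.

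The main input for the verification step is a functional correlation bound for sequential compositions $\widetilde T_n = T_{\alpha_n}\circ\cdots\circ T_{\alpha_1}$ with parameters $\le \beta_*$, analogous to Proposition~\ref{lem:pw_funct_corr} but with polynomial decay; such bounds were established in \cite{Leppanen_2017} for initial densities lying in the invariant cone $\cC_*(\beta_*)$. Since $\cC_*(\beta_*)$ is preserved under the transfer operators $\cL_{\alpha_n}$, pair correlations of Lipschitz observables decay as $m^{-(1/\beta_*-1)}$, giving (C1) at once. For (C2) and (C3) I would rewrite the triple product $(\bar f^n)^T G(\sum_{|i-n|>k}\bar f^i,\, \bar f^{n,k})\,\bar f^{n,m}$ as the integral of a single function $F$ of the iterates $(\widetilde T_i)_i$ at the time-points involved, grouped into two blocks separated by a gap of size $m$ (for (C2)) and, after centering $G$, of size $k-m$ (for (C3)). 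A direct computation shows that the Hölder/Lipschitz moduli of $F$ in each coordinate are controlled by $L^4(\Vert G\Vert_\infty + \Vert \nabla G\Vert_\infty)$, which yields the factor required in (C2)--(C3).

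For the final step, set $\gamma = 1/\beta_* - 1$ and substitute $\rho(m) = Cm^{-\gamma}$ into the bound of Theorem~\ref{thm:main_self} to obtain
$$d_{\mathscr W}(W,Z) \lesssim N(1+\log N)\,\lambda_{\min}^{-3/2}\sum_{m=1}^{N-1}(1+\log m)\,m^{1-\gamma}.$$
If $\beta_* < 1/3$ then $\gamma > 2$, the sum is $O(1)$ in $N$, and part~(1) follows immediately. If $1/3 \le \beta_* < 2/5$ then $\gamma \in (3/2,2]$, the sum is $O(N^{2-\gamma}\log N)$, and combining with the prefactor gives $d_{\mathscr W}(W,Z) \lesssim N^{3-\gamma}(\log N)^2\lambda_{\min}^{-3/2}$. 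Since $3-\gamma = 4-1/\beta_*$ and $(\log N)^2 \le C_\delta N^\delta$ for every $\delta > 0$, this yields part~(2).

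The main obstacle lies in the verification step, and specifically in passing from standard pair-correlation estimates for intermittent maps to the functional triple-block bound required by (C2)--(C3), with explicit linear dependence on $\Vert G\Vert_\infty + \Vert \nabla G\Vert_\infty$ and with decay independent of the $O(N)$-sized sum $\sum_{|i-n|>k}\bar f^i$ appearing inside $G$. This calls for transfer-operator estimates on $\cC_*(\beta_*)$ that are uniform in the number of composed maps and that absorb the non-uniform expansion near the neutral fixed point $0$; modulo these estimates (available from the techniques of \cite{Leppanen_2017}), the counting argument in the sum evaluation is routine.
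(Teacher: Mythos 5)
Your proposal is correct and follows essentially the same route as the paper: verify (C1)--(C3) via the functional correlation bound of \cite{Leppanen_2017} for sequential intermittent compositions and feed the resulting polynomial rate into Theorem \ref{thm:main_self}. The only (harmless) discrepancies are that the decay rate actually available from \cite{Leppanen_2017} carries an extra factor $(\log n)^{1/\beta_*}$, which the paper absorbs by relaxing to $\rho(n)=n^{-\kappa}$ with $\kappa>2$ (case 1), resp.\ $\rho(n)=n^{1-1/\beta_*+\delta}$ (case 2), and that in verifying (C2)--(C3) the index $n$ should be isolated in its own block (a three-block decoupling, so the decoupled term vanishes by $\mu(\bar f^n)=0$) rather than the two-block split you describe; neither point changes your final estimates.
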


\begin{remark} A couple of remarks are in order:
\begin{itemize}
\item[(i)]{ The proof is based on Theorem \ref{thm:main_self}. In the special case $d = 1$ let us denote $S = \sum_{i=0}^{N-1} \bar{f}^i$ and $\sigma^2 = \mu(S^2)$. Assuming $\beta_* < 1/3$, the sharper upper bound
\begin{align*}
d_{\mathscr{W}}( \sigma^{-1}S,Z) \le C_* N \sigma^{-3}
\end{align*}
is obtained by applying Theorem \ref{thm:main_1d} instead of Theorem  \ref{thm:main_self}, provided that $\sigma^2 > 0$. Consequently, by Lemma \ref{lem:1d_change_normal}, for any $c > 0$,
\begin{align}\label{eq:int_1d_seq}
d_{\mathscr{W}}( c^{-1}S, c^{-1} \sigma Z) \le C_* N c^{-1} \sigma^{-2}.
\end{align}
Without any assumption on $\sigma^2$ we still obtain the weaker bound
\begin{align*}
d_{\mathscr{W}}( N^{-\frac12} S, N^{-\frac12}\sigma Z) \le C_* N^{-1/6}.
\end{align*}
This follows easily by combining  \eqref{eq:int_1d_seq} with the fact that, for any random variables $X$ and $Y$ with finite variances $\sigma_X^2$ and $\sigma_Y^2$, respectively, the Wasserstein metric satisfies $d_{\mathscr{W}}(X,Y) \le \sigma_X + \sigma_Y$ (see e.g. \cite{Hella_2018} for the last statement).} \smallskip
\item[(ii)]{In the stationary case of a single intermittent map $T_{\alpha}$ preserving the measure $\hat{\mu}_{\alpha}$, a Berry-Esseen theorem for univariate H\"{o}lder continuous observables was shown by Gou\"{e}zel \cite{gouezel2005}. Gou\"{e}zel's result establishes the rate $O(N^{-1/2})$ with respect to the Kolmogorov metric for parameters $\alpha < 1/3$. For parameters $1/3 \le \alpha < 1/2$, Gou\"{e}zel obtains a rate depending on the behavior of $f(x)$ around the fixed point $x = 0$. For multivariate Lipschitz continuous observables, the rate $O(N^{-1/2})$ in the CLT with respect to the Wasserstein metric was shown for parameters $\alpha < 1/3$ in \cite{Leppanen_2017} by an application of P{\`e}ne's theorem \cite{pene2005}. The upper bound \eqref{eq:large_param} can be viewed as an extension of this result for parameters $1/3 \le \alpha < 2/5$.  P{\`e}ne's condition (see  Section \ref{sec:pene}) does not hold for parameters $\alpha \ge 1/3$ because correlations do not decay at a rate which has a finite first moment.}
\end{itemize}
\end{remark}

\begin{proof}[Proof for Theorem \ref{thm:int_sequential}  ] Set $\rho(n) = n^{1-1/\beta_*} (\log n)^{1/\beta_*}$ for $n \ge 2$ and $\rho(0) = \rho(1) = 1$. We show that conditions (C1)-(C3) of Theorem \ref{thm:main_self} hold with $\rho$ using Theorem 1.1 in \cite{Leppanen_2017}.

(C1): Let $\alpha, \beta \in \{1,\ldots, d\}$ and $0 \le n,m \le N-1$. Applying Theorem 1.1 in \cite{Leppanen_2017} with $k = 2$, $p = 1$, $F(x,y,z) =g_{\alpha}^n (y)g_{\beta}^m(z)$, and $\mu_1 = \mu$ yields the upper bound
\begin{align*}
|\mu( \bar{f}_{\alpha}^n \bar{f}_{\beta}^m )| \le C L^2 \rho(|n-m|),
\end{align*}
where $C = C(\beta_*) > 0$.

(C2): Let $0 \le n,m \le N-1$, $m \le k \le N-1$, and $G : \bR^d \times B_d(0, 4L + 1   ) \to \bR^{d \times d}$ be a bounded $C^1$-function with bounded gradient. We define 
\begin{align*}
F(x_0, \ldots, x_{n-k}, x_{n-m}, x_n, x_{n+m}, x_{n+k}, \ldots,  x_{N-1} )
\end{align*}
by the formula
\begin{align*}
\psi^n(x_n)^T G\left(   \sum_{| i -n | > k } \psi^i(x_i),  \sum_{| i - n | = k} \psi^i(x_i)   \right) \sum_{| i - n | = m} \psi^i(x_i),
\end{align*}
where $\psi^i(x) = g^i(x) - \mu(g^i \circ \widetilde{T}_i)$ and the summations are over $i$. Then,
\begin{align*}
&\mu \left[  F(  \widetilde{T}_0, \ldots, \widetilde{T}_{n-k}, \widetilde{T}_{n-m}, \widetilde{T}_n, \widetilde{T}_{n+m}, \widetilde{T}_{n+k}, \ldots,  \widetilde{T}_{N-1}) \right] \\
&=\mu \left[  ( \bar{f}^{n})^T   G \left( \sum_{|i - n| > k} \bar{f}^i, \sum_{| i - n | = k} \bar{f}^i  \right)  \sum_{| i - n | = m} \bar{f}^i   \right],
\end{align*}
which is the integral we need to control. It is easy to verify that
\begin{align}\label{eq:pm_F_bound1}
\Vert F \Vert_{\infty} \le 8L   \Vert G \Vert_\infty
\end{align}
and 
\begin{align}\label{eq:pm_F_bound2}
\sup_{\beta \in \cI} \, [F]_{1,\beta} \le 8 L^3 d^2 ( \Vert G \Vert_{\infty} + \Vert \nabla G \Vert_\infty ).
\end{align}
Here
\begin{align*}
\Vert G  \Vert_\infty =  \sup_{x \in \bR^d, \, \Vert y \Vert < 4L + 1 } \Vert G(x,y) \Vert_s \hspace{0.3cm} \text{and}  \hspace{0.3cm} \Vert \nabla G \Vert_\infty =  \max_{1 \le i \le 2d} \sup_{x \in \bR^d, \, \Vert y \Vert < 4L + 1 } \Vert \partial_i G(x,y) \Vert_s,
\end{align*}
$[F]_{1,\beta}$ is defined by \eqref{eq:holder}, and $\cI = \{ 0 \le i \le N-1 \: : \: |i-n| \ge k \} \cup \{ 0 \le i \le N-1 \: : \: |i-n| = m \} \cup \{n\} $ is an indexing for the arguments of $F$. Theorem 1.1 in \cite{Leppanen_2017} together with \eqref{eq:pm_F_bound1} and  \eqref{eq:pm_F_bound2} implies the upper bound
\begin{align*}
&\left| \mu \left[  ( \bar{f}^{n})^T   G \left( \sum_{|i - n| > k} \bar{f}^i, \sum_{| i - n | = k} \bar{f}^i  \right)  \sum_{| i - n | = m} \bar{f}^i   \right]  \right|
\le C(\beta_*)  8 L^3 d^2 ( \Vert G \Vert_{\infty} + \Vert \nabla G \Vert_\infty )   \rho(m).
\end{align*}

(C3): This is shown in the same way as condition (C2). Namely Theorem 1.1 in \cite{Leppanen_2017} is applied with the function
\begin{align*}
\psi^n(x_n)^T G_1\left(   \sum_{| i -n | > k } \psi^i(x_i),  \sum_{| i - n | = k} \psi^i(x_i)   \right) \sum_{| i - n | = m} \psi^i(x_i),
\end{align*}
where
\begin{align*}
G_1(x,y) = G(x,y) - \mu \left[  G \left( \sum_{|i - n| > k} \bar{f}^i, \sum_{| i - n | = k} \bar{f}^i  \right)  \right].
\end{align*}
We leave the details to the reader.

If $\beta_* < 1/3$, it follows by the foregoing that conditions (C1)-(C3) hold also with $\rho(n) = n^{-\kappa}$ for some $\kappa > 2$. In particular $\sum_{m=1}^{\infty} ( 1 + \log( \rho(m)^{-1} ) ) m \rho(m) < \infty$, so that item (1) of Theorem \ref{thm:int_sequential} follows by Theorem \ref{thm:main_self}. If instead $1/3 \le \beta_*  < 2/5$ we obtain conditions (C1)-(C3) with $\rho(n) = n^{1 - 1/\beta_* + \delta}$ for any $\delta > 0$. Then $\sum_{m=1}^{N-1}  (1 + \log (\rho(m)^{-1} ) ) m\rho(m) \le C(\beta_*,\delta, \delta' ) N^{3 - 1/\beta_* + \delta + \delta' }$ holds for arbitrarily small $\delta' > 0$ and item (2) of Theorem  \ref{thm:int_sequential}  follows again by Theorem \ref{thm:main_self}.
\end{proof}

In the remainder of this section we look at situations where we have control on the limiting behavior of $\operatorname{Cov}_\mu (\sum_{i=0}^{N-1} \bar f^i )$.

\subsection{Quasistatic dynamics} We apply Theorem \ref{thm:int_sequential} to a model described by time-dependent (non-random) compositions of slowly transforming intermittent maps. More precisely we consider the following subclass of quasistatic dynamical systems (QDS); for background and earlier results on quasistatic systems we refer the reader to \cite{Leppanen_2018,Stenlund_2016,Leppanen_2016, dobbs2017, Hella_2018, HL_2018}.

\begin{defn}[Intermittent QDS]\label{def_int_qds} Let $\bfT = \{ T_{\alpha_{n,k}} \: : \: 0\le k\le n, \ n\ge 1 \}$ be a triangular array of intermittent maps with parameters $\alpha_{n,k} \in [0,1)$. If there is a piecewise continuous curve $\gamma : [0,1] \to [0,1)$ satisfying
\beqn
\lim_{n\to\infty} \alpha_{n,\round{nt}} = \gamma_t
\eeqn
for all $t$, we say that $(\bfT, \gamma)$ is an intermittent QDS.
\end{defn}

Given an intermittent QDS $(\bfT, \gamma)$, we define the functions $S_n:[0,1]\times [0,1]\to\bR$ by
\beqn
S_n(x,t) =   \int_0^{nt} f_{n,\round{s}}(x)\, ds, \quad n\ge 1,
\eeqn
where 
\beqn
f_{n,k} = f\circ T_{n,k}\circ\dots\circ T_{n,1}, \quad 0\le k \le n,
\eeqn
$f_{n,0} = f$, and $f : [0,1] : \to \bR^d$ is a bounded function. We fix an initial distribution $\mu$ of $x \in [0,1]$ and for each $t \in [0,1]$ view the $S_n(t) = S_n(\cdot,t)$ as random vectors. The problem is now to approximate the law of the fluctuations
\begin{align*}
W_n(x,t) = b^{-1} \bar{S}_n(x,t)
\end{align*}
by $\cN(0,I_{d \times d})$, where $\bar{S}_n(x,t) = S_n(x,t) - \mu(S_n(x,t))$ and $b = b(n,t) = \text{Cov}_\mu(\bar{S}_n(\cdot,t))^{1/2}$.

\begin{thm}\label{thm:qds_pm} Let \(f : [0,1] \to \bR^d\) be a Lipschitz continuous function and \(\mu\) be such that its density lies in \(\cC_*(\beta_*)\). Suppose that the limiting curve $\gamma$ is Hölder-continuous, that for some $\eta \in (0,1]$ we have 
\begin{align*}
\sup_{n\ge 1} n^{\eta}\sup_{t \in [0,1]} | \alpha_{n,\round{nt}} - \gamma_t | < \infty,
\end{align*}
and that there exists \(t_{0}\in (0,1]\) such that $f$ is not a co-boundary for $T_{\gamma_{t_0}}$ in any direction\footnote{i.e.  there does not exist a unit vector $v \in \bR^d$, a constant $c \in \bR^d$, and a function $\psi \in L^2(\hat{\mu}_{\gamma_{t_0}} )$ such that $v^T f = c + \psi - \psi \circ T_{\gamma_{t_0}}$.}. 

\begin{itemize}
\item[(1)]{If \(\gamma([0,1]) \subset [0,\beta_*]\) and \(\beta_* < 1/3\), then there exists $C_* = C_*(t_0, d, f , \gamma )$ such that for all $t \ge t_0$ and $n \ge 2$,
\begin{align*}
d_{\mathscr{W}}(W_n(t),Z) \le C_* n^{-\frac12}  \log n.
\end{align*}} \smallskip
\item[(2)]{If \(\gamma([0,1]) \subset [0,\beta_*]\) and \(1/3 \le \beta_* < 2/5\), then for any $\delta > 0$ there exists $C_* = C_*(t_0, d, f , \delta, \gamma)$ such that for all $t \ge t_0$ and $n \ge 1$,
\begin{align*}
d_{\mathscr{W}}(W_n(t),Z) \le C_* n^{\frac52 - \frac{1}{\beta_*} + \delta }.
\end{align*}}

\end{itemize}

 \end{thm}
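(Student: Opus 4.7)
\textbf{Proof sketch for Theorem \ref{thm:qds_pm}.} The plan is to reduce to an application of Theorem \ref{thm:int_sequential} with $N = N(n,t) = \lfloor nt \rfloor$ and the observables $g^k = f$, $T_k = T_{\alpha_{n,k}}$ along the $n$-th row of the triangular array. First I would discretize the integral
\begin{equation*}
S_n(x,t) = \sum_{k=0}^{\lfloor nt\rfloor - 1} f_{n,k}(x) + \{nt\}\, f_{n,\lfloor nt\rfloor}(x),
\end{equation*}
so that $\bar S_n(\cdot,t)$ differs from the Birkhoff sum $\sum_{k=0}^{N-1} \bar f^k$ (in the notation of Theorem \ref{thm:int_sequential}) by a uniformly bounded term of size $\lesssim \Vert f\Vert_\infty$. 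This boundary contribution is harmless for covariance estimates up to $O(\sqrt n)$-scaled errors and does not affect the claimed rates. The remaining task is therefore to apply Theorem \ref{thm:int_sequential} and to establish a linear lower bound $\lambda_{\min}(\mathrm{Cov}_\mu(\bar S_n(\cdot,t))) \gtrsim n$ for all $t \ge t_0$.

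The main technical step is the variance lower bound. For each unit vector $v \in \bR^d$ the scalar observable $v^T f$ is, by hypothesis, not an $L^2(\hat\mu_{\gamma_{t_0}})$-coboundary for the stationary map $T_{\gamma_{t_0}}$; combined with Gou\"ezel's CLT for intermittent maps this yields a stationary asymptotic variance $\sigma^2(v) > 0$, and by continuity of $v \mapsto \sigma^2(v)$ on the sphere we get a uniform bound $\sigma^2(v) \ge c_0 > 0$. To transfer this to the quasistatic setting I would isolate a ``stationary window'' of length $\asymp n$ around time $nt_0$: on this window the triangular parameters $\alpha_{n,k}$ are $n^{-\eta}$-close to $\gamma_{k/n}$ which in turn is H\"older close to $\gamma_{t_0}$, so that the transfer operators $\cL_{\alpha_{n,k}}$ agree with $\cL_{\gamma_{t_0}}$ up to controllable errors in a suitable norm. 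Using decay of correlations (Theorem~1.1 of \cite{Leppanen_2017}) one compares
\begin{equation*}
\mathrm{Var}_\mu\Bigl(v^T \sum_{k} \bar f^k\Bigr) \quad \text{with} \quad n\, \sigma^2(v),
\end{equation*}
and shows the former is at least $c_0 n /2$ for $n$ large. This argument closely follows the variance growth analysis for intermittent QDS in \cite{HL_2018}; the H\"older regularity of $\gamma$ and the quantitative rate $\sup_t |\alpha_{n,\round{nt}} - \gamma_t| \lesssim n^{-\eta}$ are precisely what is needed to keep the comparison error sub-linear.

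With $\lambda_{\min} \gtrsim n$ in hand, item (1) follows directly from Theorem \ref{thm:int_sequential}(1): substituting $N \asymp n$ and $\lambda_{\min} \asymp n$ into $N(1+\log N)\lambda_{\min}^{-3/2}$ gives the rate $n^{-1/2}\log n$. Item (2) follows analogously from Theorem \ref{thm:int_sequential}(2): $N^{4 - 1/\beta_* + \delta}\lambda_{\min}^{-3/2} \asymp n^{5/2 - 1/\beta_* + \delta}$. In both cases one also has to verify that the density of $\mu$ remains in the cone $\cC_*(\beta_*)$ along the compositions, which is immediate from the cone invariance recalled at the beginning of Section \ref{sec:int} (since $\gamma$ and all $\alpha_{n,k}$ are bounded by $\beta_*$).

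The hard part is the variance lower bound; the rest is bookkeeping. One subtle point is that the non-coboundary assumption is stated only for $T_{\gamma_{t_0}}$ and only at the single time $t_0$, while the conclusion must hold uniformly for all $t \ge t_0$. This is handled by restricting the variance comparison to the fixed window around $nt_0$ (which is contained in $[0, nt]$ once $t \ge t_0$), thereby decoupling the argument from the later, possibly ``bad'', portion of the trajectory.
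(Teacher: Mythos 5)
Your overall strategy is the same as the paper's: apply Theorem \ref{thm:int_sequential} to the discrete Birkhoff sum along the $n$-th row, prove $\lambda_{\min}\bigl(\operatorname{Cov}_\mu(\bar S_n(\cdot,t))\bigr)\gtrsim n$ uniformly for $t\ge t_0$, and treat the discrepancy between $S_n(\cdot,t)$ and the discretized sum (and between the two normalizations) as a lower-order perturbation. Where you genuinely diverge is the variance lower bound. The paper does not run a window argument: it invokes Lemma 4.4 of \cite{HL_2018}, which gives convergence of $n^{-1}\operatorname{Cov}_\mu(\bar S_n(\cdot,t))$ to $\Sigma_t(f)=\int_0^t\hat\Sigma_s(f)\,ds$ \emph{uniformly in $t$}, and Theorem 2.11 there, which says $\Sigma_t(f)$ is positive definite for all $t\ge t_0$ under the non-coboundary hypothesis at $t_0$; the bound $\lambda_{\min}\ge Cn$ for $t\ge t_0$, $n\ge n_0$ then follows at once. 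Your route — positivity of the stationary asymptotic variance at $\gamma_{t_0}$, uniformity over the unit sphere by compactness, transfer to a window of macroscopic length around $nt_0$, and then passage to the full sum — is workable and more self-contained, but it has one step you leave implicit that must be checked: the variance of the full sum is not automatically bounded below by the window variance; you need the cross-covariance between the window and its complement to be $o(n)$. This is fine here (it is $O(\sum_m m\rho(m))$, hence $O(1)$ for $\beta_*<1/3$ and $O(N^{3-1/\beta_*+\delta})=o(n)$ for $1/3\le\beta_*<2/5$), but it should be said, and in practice your window comparison ends up reproducing the analysis of \cite{HL_2018} that the paper simply cites. On the "bookkeeping" side, be aware that the paper's three-term decomposition is doing slightly more than absorbing an $O(\Vert f\Vert_\infty)$ boundary term: since $W_n(t)$ is normalized by $b(t)=\operatorname{Cov}_\mu(\bar S_n(\cdot,t))^{1/2}$ while Theorem \ref{thm:int_sequential} self-normalizes by the covariance of the discretized sum, one must bound $\Vert b(\lceil nt\rceil/n)-b(t)\Vert_s$, which requires a covariance comparison via decay of correlations (Theorem 1.1 of \cite{Leppanen_2018}) together with the matrix square-root perturbation inequality $\Vert A^{1/2}-B^{1/2}\Vert_s\le\Vert A-B\Vert_s/(\lambda_{\min}(A)^{1/2}+\lambda_{\min}(B)^{1/2})$ from \cite{Schmitt_1992}; with the eigenvalue lower bound this yields the $O(n^{-1/2})$ contribution you anticipate, so your claim is correct in substance but this is the one place where a concrete tool, not just boundedness of $f$, is needed.
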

 
\begin{proof} Set $\xi_n(x,t) = n^{-\frac12} b W_n(x,t) $. By Lemma 4.4 in \cite{HL_2018}, uniformly in $t \in [0,1]$, 
\begin{align*}
\lim_{n \to \infty} [ \text{Cov}_\mu( \xi_n(t) )]_{\alpha,\beta} = \int_0^t [\hat{\Sigma}_s(f)]_{\alpha,\beta}  \, ds \hspace{0.5cm} \forall \alpha, \beta \in \{1,\ldots, d\},
\end{align*}
where
\begin{align*}
\hat{\Sigma}_t(f) = \hat{\mu}_{\gamma_t}[\hat{f}_t \otimes \hat{f}_t ] + \sum_{k=1}^{\infty} (\hat{\mu}_{\gamma_t}[\hat{f}_t \otimes \hat{f}_t \circ T_{\gamma_t}^k] + \hat{\mu}_{\gamma_t}[\hat{f}_t \circ T_{\gamma_t}^k \otimes \hat{f}_t ]),
\end{align*}
and $\hat{f}_t = f - \hat{\mu}_{\gamma_t}(f)$. By theorem 2.11 in the same article the limit covariance $\Sigma_t(f) : = \int_0^t \hat{\Sigma}_s(f)  \, ds$ is positive definite for all $t \ge t_0$ (this is where the co-boundary condition on $f$ is needed). In particular, $\lambda_{\min} (\Sigma_t(f)) > 0$, where $\lambda_{\min}(A)$ denotes the least eigenvalue of the matrix $A \in \bR^{d \times d}$. It follows by the same argument as in p. 20 of \cite{HL_2018} that there exists $n_0$ and $C > 0$ such that $\lambda_{\min}(  \text{Cov}_\mu( \xi_n(t) )) \ge C  $ holds for all $t \ge t_0$ and all $n \ge n_0$. In other words, 
\begin{align}\label{eq:lb_eigenv}
\lambda_{\min} ( \text{Cov}_\mu( \bar{S}_n(t))) \ge C n \hspace{0.5cm} \forall t \ge t_0 \, \forall n \ge n_0.
\end{align}

Next we show the wanted upper bound on $d_{\mathscr{W}}( W_n(t), Z) $ by controlling separately the following three terms:
\begin{align}\label{eq:term_1}
d_{\mathscr{W}}\left( b(   \tfrac{ \lceil nt\rceil }{ n}    )^{-1} \bar{S}_n(  \tfrac{ \lceil nt\rceil }{ n}  ), Z \right),
\end{align}
\begin{align}\label{eq:term_2}
d_{\mathscr{W}}\left( b(   \tfrac{ \lceil nt\rceil }{ n}    )^{-1} \bar{S}_n(  \tfrac{ \lceil nt\rceil }{ n}  ), b(  t  )^{-1} \bar{S}_n(  \tfrac{ \lceil nt\rceil }{ n}  ) \right),
\end{align}
\begin{align}\label{eq:term_3}
d_{\mathscr{W}}\left( b(  t  )^{-1} \bar{S}_n(  \tfrac{ \lceil nt\rceil }{ n}  ), b(  t  )^{-1} \bar{S}_n( t) \right),
\end{align}
where  $b(s) = \text{Cov}_\mu(\bar{S}_n(s))^{1/2}$.

Note that
\begin{align*}
\bar{S}_n(  \tfrac{ \lceil nt\rceil }{ n}  ) = \sum_{k=0}^{ \lceil nt\rceil -1 } \bar{f}_{n,k}.
\end{align*}
It follows immediately by \eqref{eq:lb_eigenv} and Theorem \ref{thm:int_sequential} that for all $n \ge n_0$ and $t \ge t_0$,
\begin{align*}
\eqref{eq:term_1}  \le  \begin{cases}  C( \Vert f \Vert_{\textnormal{Lip}}, d, \beta_*, t_0) n^{-\frac12}   \log n, \hspace{0.2cm}  &0 \le \beta_* < 1/3 \\
C( \Vert f \Vert_{\textnormal{Lip}}, d, \beta_*, t_0, \delta) n^{\frac52 - \frac{1}{\beta_*} + \delta },  \hspace{0.2cm} &1/3 \le \beta_* < 2/5,
\end{cases}
\end{align*}
where $\delta > 0$ can be made arbitrarily small.

In the remainder of this proof we assume that $\beta_* < 1/2$ and $\gamma([0,1]) \subset [0,\beta_*]$. Whenever $t \ge t_0$ and $n \ge n_0$,
\begin{align*}
\eqref{eq:term_2}  &\le  \Vert b(   \tfrac{ \lceil nt\rceil }{ n}    )^{-1}  -   b(  t  )^{-1}  \Vert_s \mu \left( \Vert \bar{S}_n(  \tfrac{ \lceil nt\rceil }{ n}  )   \Vert \right) \\
&\le \Vert  b(   \tfrac{ \lceil nt\rceil }{ n}    )^{-1}   \Vert_s  \Vert b(   \tfrac{ \lceil nt\rceil }{ n}    ) -   b(  t  )  \Vert_s  \Vert  b(  t  )^{-1}   \Vert_s \mu \left( \Vert \bar{S}_n(  \tfrac{ \lceil nt\rceil }{ n}  )   \Vert^2 \right)^{\frac12}.
\end{align*}
Since the density of $\mu$ belongs to $\cC_*(\beta_*)$, it follows by Lemma 3.3 in \cite{Leppanen_2018} that
\begin{align*}
\mu \left( \Vert \bar{S}_n(  \tfrac{ \lceil nt\rceil }{ n}  )   \Vert^2 \right) \le Cn.
\end{align*}
where $C = C(d, \Vert f \Vert_{\text{Lip}}, \gamma, \beta_*) > 0$ is a constant independent of $t$. Moreover (see Lemma \ref{lem:spectral}),
\begin{align*}
\Vert  b(   \tfrac{ \lceil nt\rceil }{ n}    )^{-1}   \Vert_s = \lambda_{\min} \left( \text{Cov}_\mu \left(\bar{S}_n(\tfrac{ \lceil nt\rceil }{ n}  ) \right)  \right)^{-\frac12} \le C n^{-\frac12}
\end{align*}
and
\begin{align*}
\Vert  b(   t    )^{-1}   \Vert_s = \lambda_{\min} \left( \text{Cov}_\mu \left( \bar{S}_n(t ) \right)  \right)^{-\frac12} \le C n^{-\frac12}.
\end{align*}
That is,
\begin{align*}
\eqref{eq:term_2} \le Cn^{-\frac12}  \Vert b(   \tfrac{ \lceil nt\rceil }{ n}    ) -   b(  t  )  \Vert_s.
\end{align*}
For brevity denote $\Sigma_{n,s} = \text{Cov}_\mu (\bar{S}_n(s))$. Then we have for $t \ge t_0$ and $n \ge n_0$ the upper bound (see \cite{Schmitt_1992} for the first inequality)
\begin{align*}
\Vert b(   \tfrac{ \lceil nt\rceil }{ n}    ) -   b(  t  )  \Vert_s \le  \frac{\Vert \Sigma_{n,\lceil nt\rceil / n  }  - \Sigma_{n,t} \Vert_s }{  \lambda_{\min}(\Sigma_{n,\lceil nt\rceil / n  } )^{\frac12} + \lambda_{\min}( \Sigma_{n,t})^{\frac12} } \le C n^{-\frac12} \Vert \Sigma_{n,\lceil nt\rceil / n  }  - \Sigma_{n,t} \Vert_s.
\end{align*}
To bound the remaining spectral norm we fix $\alpha, \beta \in \{1,\ldots, d\}$, denote $\varphi = f_{\alpha}$ and $\psi = f_{\beta}$. For a a real-valued function $g : [0,1] \to \bR$ and integers $0 \le k \le n$ we denote $\bar{g}_{n, k} = g \circ T_{n,k} - \mu(g \circ T_{n,k})$. Whenever $n \ge 2/t_0$, we can use Theorem 1.1 in \cite{Leppanen_2018} to find $\kappa > 1$ such that 
\begin{align*}
 &\left|  [\Sigma_{n,\lceil nt\rceil / n  }  - \Sigma_{n,t}]_{\alpha,\beta} \right| \\
 &= \left| \int_0^{\lceil nt \rceil} \int_0^{\lceil nt \rceil} \mu( \bar{\varphi}_{n,\round{r}} \bar{\psi}_{n,\round{s}} )  \, dr \, ds  - \int_0^{nt} \int_0^{nt} \mu( \bar{\varphi}_{n,\round{r}} \bar{\psi}_{n,\round{s}} )  \, dr \, ds \right| \\
 &\le \int_0^{\lceil nt \rceil} \int_{nt}^{\lceil nt \rceil} | \mu( \bar{\varphi}_{n,\round{r}} \bar{\psi}_{n,\round{s}} ) |  \, dr \, ds  +  \int_{nt}^{\lceil nt \rceil} \int_{0}^{\lceil nt \rceil} | \mu( \bar{\varphi}_{n,\round{r}} \bar{\psi}_{n,\round{s}} ) |  \, dr \, ds   \\
  &\le C\Vert f \Vert_{\infty}^2 +  \int_0^{nt-2} \int_{nt}^{\lceil nt \rceil} | \mu( \bar{\varphi}_{n,\round{r}} \bar{\psi}_{n,\round{s}} ) |  \, dr \, ds  
  +  \int_{nt}^{\lceil nt \rceil} \int_{0}^{ nt -2 } | \mu( \bar{\varphi}_{n,\round{r}} \bar{\psi}_{n,\round{s}} ) |  \, dr \, ds    \\
  &\le C \Vert f \Vert_{\infty}^2 + 2C \Vert f \Vert_{\text{Lip}}^2  \int_0^{nt-2} \int_{nt}^{\lceil nt \rceil} ( \round{r}- \round{s})^{-\kappa} \, dr \, ds \\
  &\le C \Vert f \Vert_{\text{Lip}}^2.
\end{align*}
Hence, the upper bound $\Vert \Sigma_{n,\lceil nt\rceil / n  }  - \Sigma_{n,t} \Vert_s \le dC \Vert f \Vert_{\text{Lip}}^2$ follows by Lemma  \ref{lem:spectral}. We have shown that $\eqref{eq:term_2} \le C n^{-1/2}$ whenever $t \ge t_0$ and $n \ge n_0$.

Finally, by \eqref{eq:lb_eigenv} and Lemma \ref{lem:spectral},
\begin{align*}
\eqref{eq:term_3} &\le \mu \left[ \Vert b(t)^{-1}(\bar{S}_n( \tfrac{  \lceil nt\rceil}{n}) -  \bar{S}_n(t)  )  \Vert  \right] \le \Vert b(t)^{-1} \Vert_s  \int^{\lceil nt \rceil}_{nt}  \mu (\Vert \bar{f}_{n,\round{r}} \Vert ) \, dr \\
&\le C \Vert f \Vert_{\infty} n^{-\frac12},
\end{align*}
whenever $t \ge t_0$ and $n \ge n_0$. Now to finish the proof for Theorem \ref{thm:qds_pm} it suffices to combine the foregoing upper bounds on $\eqref{eq:term_1}$, \eqref{eq:term_2}, and \eqref{eq:term_3}.

\end{proof}

\subsection{Rate in the quenched CLT} We consider a sequence $(T_{\omega_{i}})_{i\ge1}$ of intermittent maps with parameters $(\omega_i)_{i \ge 1}$ drawn randomly from the probability space~$(\Omega,\cF,\bP) = ([0,\beta_*]^{\bZ_+},\cE^{\bZ_+},\bP)$, where $\cE$ is the Borel $\sigma$-algebra of $[0,\beta_*]$ and $\bZ_+ = \{1,2,\dots\}$. Let $\tau:\Omega\to\Omega$ denote the shift $(\tau(\omega))_i = \omega_{i+1}$.

\noindent \textbf{Conditions (RDS):}
\begin{itemize}
\item[(i)]{The shift $\tau:\Omega\to\Omega:(\tau(\omega))_i = \omega_{i+1}$ preserves $\bP$.} \smallskip
\item[(ii)]{There is $C> 0$ and $\gamma > 0$ such that, for all $n \ge 1$,
\beqn
\sup_{i\ge 1} \sup_{A\in \cF_1^i,\, B\in \cF_{i+n}^\infty}|\bP(A \cap B) - \bP(A)\,\bP(B)| \le Cn^{-\gamma},
\eeqn
where $\cF_1^i$ is the sigma-algebra generated by the projections $\pi_1,...,\pi_i$, $\pi_k(\omega)=\omega_k$, and $\cF_{i+n}^\infty$ is generated by $\pi_{i+n},\pi_{i+n+1}, \dots$.}
\end{itemize}
We set 
\begin{align*}
W = W(\omega) = \sum_{n=1}^{N-1} N^{-\frac12} \bar{f}^n ; \hspace{0.5cm}  \bar{f}^n =  f \circ \varphi(n,\omega) - \mu(f \circ \varphi(n,\omega));
\end{align*}
where $f : [0,1] \to \bR^d$ is a bounded observable with $d \ge 1$, and $\varphi(n,\omega) = T_{\omega_n} \circ \cdots \circ T_{\omega_1}$. That is, we take
\begin{align*}
b = \sqrt{N} I_{d \times d}
\end{align*}
as the normalizing matrix.

\begin{thm}\label{thm:int_random} Suppose that $\beta_* < 1/3$, that $f : [0,1] \to \bR^d$ is Lipschitz continuous, and that $\mu$ is a measure whose density belongs to $\cC_*(\beta_*)$. Assume conditions (RDS). Then, 
 \beqn
 \Sigma= \sum_{k=0}^\infty (2-\delta_{k0})\lim_{i\to\infty}  \bE [\mu(f^i (f^{i+k})^T) - \mu(f^i)\mu( (f^{i+k})^T)]
 \eeqn
is well-defined and positive semi-definite. Moreover, $\Sigma$ is positive definite if and only if
\begin{align}\label{eq:covar_pos}
\sup_{N \ge 1} \bE  \mu \left( \sum_{i < N} v^T f \circ \varphi(i,\omega)  \right)^2 = \infty
\end{align}
holds for all $v \neq 0$. Fix an arbitrarily small $\delta > 0$. If $\Sigma$ is positive definite, then there is $\Omega^* \subset \Omega$ with $\bP(\Omega^*) = 1$ such that for any three times differentiable function  \(h: \, \bR^d \to \bR\) with \(\max_{1 \le k \le 3} \Vert D^k h \Vert_{\infty} < \infty\), any $\omega \in \Omega^*$, and any $N \ge 2$, 
\begin{align*}
|\mu[h(W)] - \Phi_{\Sigma}(h)| \le C_*(\Vert D^3 h \Vert_{\infty} + \Vert h \Vert_{\textnormal{Lip}})  \theta(N),
\end{align*}
where $C_* = C_*( (T_{\omega})_{\omega \in \Omega}, d, f ) > 0$ and
\begin{align*}
\theta(N) = 
\begin{cases}
 O(N^{-\frac 12} (\log N)^{\frac32+\delta}), & \gamma > 1,
 \\
  O(N^{-\frac12+\delta}), & \gamma = 1,
  \\
  O(N^{-\frac{\gamma}{2}} (\log N)^{\frac32+\delta}), & 0<\gamma<1.
\end{cases}
\end{align*}
\end{thm}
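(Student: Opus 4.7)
The plan is to apply Theorem~\ref{thm:main_multi_general} with the normalization $b = \sqrt{N}\, I_{d \times d}$ and then compare the two Gaussian laws $\cN(0, \Sigma_N)$ and $\cN(0, \Sigma)$, where $\Sigma_N := \operatorname{Cov}_\mu(W)$. Writing
\begin{align*}
|\mu(h(W)) - \Phi_\Sigma(h)| \le |\mu(h(W)) - \Phi_{\Sigma_N}(h)| + |\Phi_{\Sigma_N}(h) - \Phi_\Sigma(h)|,
\end{align*}
the first summand will produce a Stein error of size $O(N^{-1/2})$ almost surely, while the second is governed by the quenched fluctuations $\Vert \Sigma_N - \Sigma \Vert$, whose $\gamma$-dependent rate accounts for the factor $\theta(N)$.

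First I would establish the existence of $\Sigma$ together with the positive-definiteness dichotomy. For any fixed $\omega$, the functional correlation bound from Theorem 1.1 in \cite{Leppanen_2017}, as used in the proof of Theorem~\ref{thm:int_sequential}, gives
\begin{align*}
|\mu(\bar f^i_\alpha \bar f^{i+k}_\beta)| \le C\, \rho(k), \qquad \rho(k) \le C(\beta_*)\, k^{1-1/\beta_*}(\log k)^{1/\beta_*},
\end{align*}
with $\sum_k k\rho(k) < \infty$ since $\beta_* < 1/3$. Because $\omega \mapsto \mu(\bar f^i \otimes \bar f^{i+k})$ depends only on $(\omega_1,\dots,\omega_{i+k})$, $\bP$-invariance of $\tau$ combined with condition (RDS)(ii) and dominated convergence yields a limit $\Lambda_k := \lim_{i\to\infty} \bE[\mu(f^i (f^{i+k})^T) - \mu(f^i)\mu((f^{i+k})^T)]$. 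Absolute summability then gives $\Sigma = \sum_{k\ge 0}(2-\delta_{k0})\Lambda_k$, which is positive semi-definite. A short manipulation identifies $v^T \Sigma v$ with $\lim_N N^{-1}\bE\mu\bigl((v^T\sum_{i<N}\bar f^i)^2\bigr)$, which gives the characterization~\eqref{eq:covar_pos}.

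Next, fix $\omega$ outside a $\bP$-null set depending only on $(T_\omega)_{\omega \in \Omega}$. The quenched system is a sequential composition of intermittent maps with parameters in $[0,\beta_*]$, so the verification of (C1)--(C3) carried out in the proof of Theorem~\ref{thm:int_sequential} applies verbatim and yields conditions (A1)--(A3) of Theorem~\ref{thm:main_multi_general} with the same $\rho$ and with constants depending only on $\Vert f \Vert_{\textnormal{Lip}}$, $d$ and $\beta_*$. Condition (A4) holds for all sufficiently large $N$, because $\Sigma_N \to \Sigma$ and $\Sigma$ is positive definite. Theorem~\ref{thm:main_multi_general} then produces
\begin{align*}
|\mu(h(W)) - \Phi_{\Sigma_N}(h)| \le C_*\, \Vert D^3 h \Vert_\infty\, N^{-1/2}.
\end{align*}

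The main obstacle is the almost-sure control of $\Vert \Sigma_N - \Sigma \Vert$ in each of the three regimes of $\gamma$. I would split $\Sigma_N - \Sigma = (\Sigma_N - \bE\Sigma_N) + (\bE\Sigma_N - \Sigma)$. The annealed bias $\bE\Sigma_N - \Sigma$ is of order $N^{-1}\log N$ by a Ces\`aro argument applied to the summable Green--Kubo series, combined with the quantitative rate of $\bE[\mu(\bar f^i\otimes \bar f^{i+k})] \to \Lambda_k$ coming from the mixing of $\tau$. For the centered part, I would use that each entry of $\mu(\bar f^i \otimes \bar f^j)$ is a bounded measurable function of $(\omega_1,\dots,\omega_{\max(i,j)})$; Rio-type covariance inequalities for $\alpha$-mixing sequences then give
\begin{align*}
\bE \Vert \Sigma_N - \bE\Sigma_N \Vert^2 \lesssim \begin{cases} N^{-1}(\log N)^{C}, & \gamma \ge 1,\\ N^{-\gamma}(\log N)^{C}, & 0 < \gamma < 1, \end{cases}
\end{align*}
with a mild logarithmic loss at the endpoint $\gamma = 1$. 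A Borel--Cantelli extraction along a suitable subsequence $N_k$, with monotonicity to fill the gaps, converts this into $\Vert \Sigma_N - \Sigma\Vert = O(\theta(N))$ almost surely. Since $\Sigma$ is positive definite, $\Sigma_N$ is eventually bounded below in operator norm, and a standard Gaussian interpolation yields
\begin{align*}
|\Phi_{\Sigma_N}(h) - \Phi_\Sigma(h)| \lesssim \Vert h \Vert_{\textnormal{Lip}}\, \Vert \Sigma_N - \Sigma \Vert.
\end{align*}
Combining this with the previous Stein estimate produces the rate $\theta(N)$ claimed in the theorem, with the logarithmic exponent $3/2 + \delta$ emerging from the Borel--Cantelli step applied to an $L^{2+\varepsilon}$-sharpening of the variance bound.
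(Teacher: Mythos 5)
Your overall architecture is exactly the paper's: split $|\mu[h(W)]-\Phi_\Sigma(h)|$ into the Stein error against $\cN(0,\Sigma_N)$, obtained from Theorem \ref{thm:main_multi_general} with $b=\sqrt{N}I_{d\times d}$ after verifying (A1)--(A3) by the functional correlation bound exactly as in Theorem \ref{thm:int_sequential}, plus a Gaussian comparison $|\Phi_{\Sigma_N}(h)-\Phi_\Sigma(h)|\lesssim \mathrm{Lip}(h)\Vert\Sigma_N-\Sigma\Vert_s$ controlled by the almost-sure rate of $\Sigma_N\to\Sigma$, which carries the $\gamma$-dependent factor $\theta(N)$. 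The difference is that the paper imports the two nontrivial inputs wholesale — the almost-sure rates for $|v^T\Sigma_N v - v^T\Sigma v|$ and the nondegeneracy dichotomy from Theorem 2.6 of \cite{HL_2018}, together with Lemma 4.4 of \cite{Stenlund_2018} to pass from quadratic forms to the spectral norm — whereas you attempt to re-derive them, and this is where your proposal has genuine gaps.

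The most serious one is the claim that the equivalence between positive definiteness of $\Sigma$ and \eqref{eq:covar_pos} follows from ``a short manipulation'' identifying $v^T\Sigma v$ with $\lim_N N^{-1}\bE\mu\bigl((v^T\sum_{i<N}\bar f^i)^2\bigr)$. That identification only gives one direction: if $v^T\Sigma v>0$ the annealed second moments are unbounded. The converse fails at this level of generality — a variance can be unbounded yet grow sublinearly, in which case the Ces\`aro limit is $0$ — so the ``if'' direction requires a real dichotomy argument (a coboundary-type decomposition in the non-stationary setting), which is precisely the content of the result the paper cites; note also that \eqref{eq:covar_pos} involves the uncentered sums $v^Tf\circ\varphi(i,\omega)$, an additional wrinkle your manipulation does not address. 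Two further steps are misattributed or too loose as sketched: the existence of $\Lambda_k$ does not follow from $\bP$-invariance of $\tau$ and dominated convergence alone — stationarity of the base gives no convergence in $i$; you need the polynomial memory loss of the fiber compositions (e.g.\ via \cite{Leppanen_2017} and the cone $\cC_*(\beta_*)$) to wash out the dependence on $\mu$, and the same mechanism, not the mixing of $\tau$, governs the annealed bias. Finally, in the fluctuation estimate the ``monotonicity to fill the gaps'' step is not available ($N\mapsto v^T\Sigma_N v$ is not monotone), so the Borel--Cantelli extraction needs an oscillation/maximal bound over blocks to produce the exponent $\tfrac32+\delta$; this is repairable and is essentially how the cited Theorem 2.6 of \cite{HL_2018} proceeds, but as written it is an assertion rather than a proof.
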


\begin{remark}  Nicol--T\"{o}r\"{o}k--Vaienti  \cite{Nicol_2018}, Su \cite{Su_vector_2019}, and Nicol--Pereira--T\"{o}r\"{o}k \cite{nicol2019} proved CLTs without rates of convergence for random dynamical systems of intermittent maps with parameters $\omega_i \le \beta_* < 1/2$. Theorem \ref{thm:int_random} gives a better rate of convergence than the following upper bound established in \cite{HL_2018} for univariate $f : [0, 1] \to \bR$:
 \beqn
d_\mathscr{W}(W,\sigma Z) = 
\begin{cases}
O(N^{\beta_* - \frac12}(\log N)^{\frac{1}{\beta_*}}), & \gamma \ge 1,
  \\
  O(N^{\beta_* - \frac12}(\log N)^{\frac{1}{\beta_*}})+O(N^{-\frac{\gamma}{2}} (\log N)^{\frac32+\delta}), &  0<\gamma<1.
\end{cases}
 \eeqn 
Here $Z \sim \cN(0, 1)$ and $\sigma^2 = \Sigma$. The proof below can be modified to obtain the upper bound $d_\mathscr{W}(W,\sigma Z) \le C_* \theta(N)$ for univariate $f : [0,1] \to \bR$.

\end{remark}

\begin{proof} Given any vector $v \in \bR^d$ denote
\begin{align*}
\ell_n(v) = v^T \Sigma_N v,
\end{align*}
where $\Sigma_N = \text{Cov}_\mu(W \otimes W)$. In other words, $\ell_n(v)$ is the variance of 
\begin{align*}
W(v^Tf) = \sum_{n=1}^{N-1} N^{-\frac12} v^T \bar{f}^n.
\end{align*}
Let $v \in \bR^d$. By Theorem 2.6 in \cite{HL_2018}, $\lim_{n \to \infty} \ell_n(v) = v^T \Sigma v$ exists and $v^T \Sigma v > 0$ if and only if
\begin{align*}
\sup_{N \ge 1} \bE  \mu \left( \sum_{i < N} v^T f \circ \varphi(i,\omega)  \right)^2 = \infty.
\end{align*}
Hence \eqref{eq:covar_pos} is equivalent to the positive definiteness of $\Sigma$. The proof of Theorem 2.6 in \cite{HL_2018} also shows that, for almost every $\omega \in \Omega$, 
\beq
\left|\ell_{N}(v)- v^T \Sigma v \right| = 
\begin{cases}
 O(N^{-\frac 12} (\log N)^{\frac32+\delta}), & \gamma > 1,
 \\
  O(N^{-\frac12+\delta}), & \gamma = 1,
  \\
  O(N^{-\frac{\gamma}{2}} (\log N)^{\frac32+\delta}), & 0<\gamma<1,
  \label{eq:sigma^2 bounds}
\end{cases}
\eeq
Hence, by Lemma 4.4 in \cite{Stenlund_2018}, for almost every $\omega \in \Omega$, 
\begin{align}
\Vert \Sigma_N - \Sigma \Vert_s = 
\begin{cases}
 O(N^{-\frac 12} (\log N)^{\frac32+\delta}), & \gamma > 1,
 \\
  O(N^{-\frac12+\delta}), & \gamma = 1,
  \\
  O(N^{-\frac{\gamma}{2}} (\log N)^{\frac32+\delta}), & 0<\gamma<1.
\end{cases}
 \label{eq:sigma^2_bounds}
\end{align}

From now on we assume that $\Sigma$ is positive definite. We split $|\mu[h(W)] - \Phi_{\Sigma}(h)| $ into two terms:
\begin{align}\label{eq:random_pm_term1}
|\mu[h(W)] - \Phi_{\Sigma_N}(h)| 
\end{align}
and
\begin{align}\label{eq:random_pm_term2}
| \Phi_{\Sigma_N}(h) - \Phi_{\Sigma}(h)|.
\end{align}
It follows by \eqref{eq:sigma^2_bounds} that there is $N_0$ such that $\Sigma_N$ is positive definite for $N \ge N_0$ and a.e. $\omega \in \Omega$. Then, for all $\omega \in \Omega$ and $N \ge N_0$, the upper bound
\begin{align}\label{eq:pm_random_normal}
\eqref{eq:random_pm_term1} \le C  \Vert D^3 h \Vert_{\infty} N^{-\frac12}
\end{align}
holds for some $C = C(\beta_*, d, \Vert f \Vert_{\text{Lip}}) > 0$. The proof for \eqref{eq:pm_random_normal} is almost verbatim the same as the proof for Theorem \ref{thm:int_sequential}: Theorem \ref{thm:main_multi_general} is applied with $b = \sqrt{N} I_{d \times d}$ after verifying conditions (A1)-(A3) using Theorem 1.1 in \cite{Leppanen_2018}. We will not repeat the argument here.

Finally, it is easy to show that, for some absolute constant $C > 0$,
\begin{align*}
\eqref{eq:random_pm_term2} \le C \text{Lip}(h) \Vert \Sigma_N^{\frac12} - \Sigma^{\frac12} \Vert_s.
\end{align*}
Hence, for $N \ge N_0$ and a.e. $\omega \in \Omega$ (see \cite{Schmitt_1992}  for the first inequality),
\begin{align*}
\eqref{eq:random_pm_term2} \le  C \text{Lip}(h)\frac{\Vert \Sigma_{N}  - \Sigma \Vert_s }{  \lambda_{\min}(\Sigma_{N} )^{\frac12} + \lambda_{\min}( \Sigma)^{\frac12} } \le C \text{Lip}(h) \lambda_{\min}( \Sigma)^{-\frac12}  \Vert \Sigma_{N}  - \Sigma \Vert_s.
\end{align*}
The obtained upper bound combined with \eqref{eq:sigma^2_bounds} finishes the proof for Theorem \ref{thm:int_random}.

\end{proof}

\section{Proofs for main results}\label{sec:proof}

\subsection{On the regularity of solutions to Stein equation} Let the matrix~\(\Sigma \in \bR^{d\times d}\) be symmetric and positive definite. Denote respectively by~$\phi_\Sigma$ and~$\Phi_\Sigma$ the density and expected value of the $d$-dimensional normal distribution with mean $0$ and covariance matrix $\Sigma$. Given a test function \(h:\bR^d\to\bR\), define
\begin{align}\label{eq:stein_solution}
&A(w)= -\int_0^{\infty} \! \left\{\int_{\bR^d} h(e^{-s}w + \sqrt{1 - e^{-2s}}\,z)\,\phi_{\Sigma}(z)\,dz - \Phi_{\Sigma}(h)\right\} \, ds.
\end{align}
Then, we have the following result for smooth test functions $h$; see~\cite{Barbour_1990,Gotze_1991,GoldsteinRinott_1996,Gaunt_2016}:

\begin{lem}\label{lem:stein_solution} Let \(h: \, \bR^d \to \bR\) be three times differentiable with \(\Vert D^k h \Vert_{\infty} < \infty\) for \(1 \le k \le 3\). Then,  \(A \in C^3(\bR^d,\bR)\), and \(A\) solves the Stein equation \eqref{eq:stein_multi}. Moreover, the partial derivatives of \(A\) satisfy the bounds 
 \begin{align*}
\| \partial_1^{t_1}\cdots \partial_d^{t_d} A \|_\infty \le k^{-1} \| \partial_1^{t_1}\cdots \partial_d^{t_d} h \|_\infty
\end{align*}
whenever $t_1+\cdots+t_d = k$, $1\le k\le 3$.
\end{lem}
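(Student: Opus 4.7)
The function $A$ is the standard potential representation of the solution to~\eqref{eq:stein_multi} through the Ornstein--Uhlenbeck (OU) semigroup
\[
(P_s h)(w) = \int_{\bR^d} h\bigl(e^{-s}w + \sqrt{1 - e^{-2s}}\,z\bigr)\,\phi_{\Sigma}(z)\,dz, \qquad s \ge 0,
\]
whose invariant distribution is $\cN(0,\Sigma)$ and whose generator, acting on $C^2$ functions, is $\mathcal{G} f(w) = \tr \Sigma D^2 f(w) - w^T \nabla f(w)$---precisely the operator on the left-hand side of~\eqref{eq:stein_multi}. In these terms, $A(w) = -\int_0^\infty\bigl[(P_s h)(w) - \Phi_{\Sigma}(h)\bigr]\,ds$, so the plan is to exploit general properties of $(P_s)_{s \ge 0}$ rather than manipulating~\eqref{eq:stein_solution} directly.

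First I would verify that the integrand is $L^1$ in $s$ for each fixed $w$. Coupling the random vector $e^{-s}w + \sqrt{1-e^{-2s}}\,Z$ with $Z\sim\cN(0,\Sigma)$ and applying the mean-value theorem with $\Vert\nabla h\Vert_\infty<\infty$ yields a pointwise bound of order $e^{-s}(\Vert w\Vert + \E\Vert Z\Vert)$, which is integrable on $[0,\infty)$. This shows $A$ is well-defined, and the same kind of uniform domination will be reused in every subsequent step.

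The derivative bounds rest on the intertwining identity
\[
\partial_1^{t_1}\cdots\partial_d^{t_d}(P_s h)(w) = e^{-ks}\,P_s\bigl(\partial_1^{t_1}\cdots\partial_d^{t_d}h\bigr)(w), \qquad t_1+\cdots+t_d=k,
\]
obtained by differentiating under the Gaussian integral (legitimate since each derivative of $h$ up to order three is bounded and $\phi_\Sigma$ is a fixed integrable weight). Since $\Vert P_s g\Vert_\infty \le \Vert g\Vert_\infty$, the integrand for the $k$th derivative of $A$ is majorized by $e^{-ks}\Vert\partial_1^{t_1}\cdots\partial_d^{t_d}h\Vert_\infty$, and dominated convergence lets me differentiate term by term and bound the result by $k^{-1}\Vert\partial_1^{t_1}\cdots\partial_d^{t_d}h\Vert_\infty$, as required. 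This simultaneously places $A$ in $C^3(\bR^d,\bR)$.

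Finally, verifying that $A$ solves~\eqref{eq:stein_multi} follows from Kolmogorov's backward equation $\partial_s P_s h = \mathcal{G}P_s h$: applying $\mathcal{G}$ under the outer integral (again justified by the $e^{-ks}$ bounds from the intertwining) yields
\[
\mathcal{G}A(w) = -\int_0^\infty \partial_s(P_s h)(w)\,ds = (P_0h)(w) - \lim_{s\to\infty}(P_s h)(w) = h(w) - \Phi_{\Sigma}(h).
\]
The main obstacle is purely technical bookkeeping: making the repeated interchanges of differentiation, Lebesgue integration in $z$, and $s$-integration rigorous. The intertwining identity together with the exponential weight $e^{-ks}$ supplies uniformly integrable dominants in each case, so no single interchange is delicate in isolation, but care is needed to carry them out in a self-consistent order.
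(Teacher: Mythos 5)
Your argument is correct: the paper does not prove Lemma \ref{lem:stein_solution} at all but simply cites \cite{Barbour_1990,Gotze_1991,GoldsteinRinott_1996,Gaunt_2016}, and your Ornstein--Uhlenbeck semigroup argument (the commutation $\partial_1^{t_1}\cdots\partial_d^{t_d}P_s h = e^{-ks}P_s(\partial_1^{t_1}\cdots\partial_d^{t_d}h)$, integration of $e^{-ks}$ over $s\in[0,\infty)$ producing the factor $k^{-1}$, and the backward equation $\partial_s P_s h=\mathcal{G}P_s h$ to verify \eqref{eq:stein_multi}) is precisely the standard proof in those references. The one point worth spelling out is continuity of the third derivatives of $A$ when $D^3h$ is only assumed bounded rather than continuous; this follows from your representation $\partial_1^{t_1}\cdots\partial_d^{t_d}A(w)=-\int_0^\infty e^{-3s}P_s(\partial_1^{t_1}\cdots\partial_d^{t_d}h)(w)\,ds$ together with the smoothing property of $P_s$ for $s>0$ and dominated convergence in $s$.
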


Note that the bounds on the partial derivatives of $A$ are independent of the covariance matrix $\Sigma$.

Recently Gallou\"et--Mijoule--Swan \cite{Gallouet_2018} obtained notable improvements on the regularity of solutions to Stein's equation in the case $\Sigma = I_{d\times d}$, for test functions $h$ that are assumed to be Hölder continuous:

\begin{lem}[See Proposition 2.2 in \cite{Gallouet_2018} ]\label{lem:solution_self} Set $\Sigma = I_{d\times d}$ and let $h : \bR^d \to \bR$ be $\eta$-Hölder continuous with some $\eta \in (0,1]$. Then the function $A : \bR^d \to \bR$ defined by \eqref{eq:stein_solution} solves the Stein equation \eqref{eq:stein_multi}. Moreover, $A \in C^2(\bR^d,\bR)$  and its second derivative satisfies the following bound:
\begin{align}\label{eq:log_lip}
\Vert D^2A(w) - D^2A(w')  \Vert_s &\le \Vert w - w' \Vert [h]_\eta (  C_\# +  |\log  \Vert w - w' \Vert | ), \hspace{0.2cm} \forall w,w' \in \bR^d,
\end{align}
where
\begin{align*}
[h]_\eta = \sup_{x \neq y} \frac{| h(x) - h(y) |}{ \Vert x - y \Vert^{\eta}}
\end{align*}
and
\begin{align*}
C_\#  = 2^{\frac{\eta}{2} + 1 }  \frac{\eta + 2d}{\eta d} \frac{\Gamma( \frac{\eta + d}{2})}{\Gamma(\frac{d}{2})}.
\end{align*}
\end{lem}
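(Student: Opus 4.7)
The plan is to analyze $A$ through the Ornstein--Uhlenbeck semigroup representation
$$P_s h(w) = \int h(e^{-s}w + \sqrt{1-e^{-2s}}\,z)\,\phi_{I_{d\times d}}(z)\,dz,$$
in terms of which \eqref{eq:stein_solution} reads $A(w) = -\int_0^\infty (P_s h(w) - \Phi_{I_{d\times d}}(h))\,ds$. Since $\partial_s P_s h = L P_s h$ with $L = \Delta - w\cdot\nabla$ the OU generator, and $P_s h(w) \to \Phi_{I_{d\times d}}(h)$ as $s \to \infty$, the fundamental theorem of calculus gives $L A = h - \Phi_{I_{d\times d}}(h)$; hence $A$ will solve \eqref{eq:stein_multi}, and the task reduces to quantifying the smoothness of $A$.

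The core input is that the Gaussian kernel smooths $h$ for every $s > 0$. Differentiating under the integral using $\nabla_z \phi_{I_{d\times d}}(z) = -z\,\phi_{I_{d\times d}}(z)$ (equivalently, Gaussian integration by parts after the substitution $u = e^{-s}w + \sqrt{1-e^{-2s}}\,z$) produces, for every $k \ge 1$, a Mehler-type formula
$$\partial_{i_1}\cdots\partial_{i_k} P_s h(w) = \Bigl(\frac{e^{-s}}{\sqrt{1-e^{-2s}}}\Bigr)^{\!k} \mathbb{E}\bigl[h(e^{-s}w + \sqrt{1-e^{-2s}}\,Z)\,\mathcal{H}_{i_1\cdots i_k}(Z)\bigr],$$
where $Z \sim \mathcal{N}(0, I_{d\times d})$ and $\mathcal{H}_{i_1\cdots i_k}$ is a Hermite-type polynomial with $\mathbb{E}[\mathcal{H}_{i_1\cdots i_k}(Z)] = 0$. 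Subtracting the constant $h(e^{-s}w)$ from $h$ inside the expectation (legitimate by the vanishing mean of $\mathcal{H}$) and invoking $\eta$-Hölder continuity of $h$ gives the key bound
$$\|D^k P_s h\|_\infty \le c_{k,d}\,[h]_\eta\, e^{-ks}(1-e^{-2s})^{(\eta-k)/2}.$$
For $k = 2$ the exponent $(\eta-2)/2 > -1$ makes $\int_0^\infty D^2 P_s h(w)\,ds$ absolutely convergent, so $A \in C^2(\bR^d,\bR)$ with $D^2 A(w) = -\int_0^\infty D^2 P_s h(w)\,ds$.

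To prove \eqref{eq:log_lip} I would split the integral at a cutoff $s_0 \in (0,1)$ to be optimized. On $[0, s_0]$ the triangle inequality combined with the $k=2$ bound contributes at most $2 c_{2,d}\,[h]_\eta \int_0^{s_0} e^{-2s}(1-e^{-2s})^{(\eta-2)/2}\,ds \lesssim [h]_\eta\,s_0^{\eta/2}$. On $[s_0, \infty)$ the mean value inequality together with the $k=3$ bound gives
$$\int_{s_0}^\infty \|D^2 P_s h(w) - D^2 P_s h(w')\|_s\,ds \le c_{3,d}\,[h]_\eta\,\|w - w'\| \int_{s_0}^\infty e^{-3s}(1-e^{-2s})^{(\eta-3)/2}\,ds.$$
In the Lipschitz case $\eta = 1$ the last integral behaves like $C + |\log s_0|$ as $s_0 \to 0^+$, and balancing $s_0^{1/2}$ against $\|w-w'\|\,|\log s_0|$ via $s_0 \asymp \|w-w'\|^2$ produces the stated log-Lipschitz bound.

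The hard part is this optimization: one must simultaneously exploit the integrability gained from the centering trick (keeping the small-$s$ contribution under control) and the extra factor $\|w-w'\|$ supplied by the MVT (keeping the large-$s$ contribution linear in the increment), and the logarithm arises precisely from the marginal integrability of $(1-e^{-2s})^{-1}$ near zero in the Lipschitz case. Tracking constants explicitly, the Gaussian moment $\mathbb{E}[\|Z\|^\eta] = 2^{\eta/2}\Gamma((d+\eta)/2)/\Gamma(d/2)$ appearing in the centering step yields the $\Gamma$-ratio in $C_\#$, while the prefactor $(\eta + 2d)/(\eta d)$ reflects the combined $d$- and $\eta$-dependence of the Hermite polynomial coefficients and of the integrals evaluated at the optimized threshold.
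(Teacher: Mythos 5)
The paper does not actually prove Lemma \ref{lem:solution_self}: it is imported verbatim from Proposition 2.2 of \cite{Gallouet_2018}, so there is no internal argument to compare against. Your route --- writing $A$ in \eqref{eq:stein_solution} through the Ornstein--Uhlenbeck semigroup, obtaining Mehler-type formulas for $D^kP_sh$ by Gaussian integration by parts, centering against the mean-zero Hermite factor to get $\Vert D^kP_sh\Vert_\infty \lesssim [h]_\eta e^{-ks}(1-e^{-2s})^{(\eta-k)/2}$, and splitting the $s$-integral at $s_0\asymp\Vert w-w'\Vert^2$ (small $s$ via the $k=2$ bound, large $s$ via the mean value theorem and the $k=3$ bound) --- is precisely the method of that reference, so in substance you have reconstructed the intended proof of both the solvability of \eqref{eq:stein_multi} and the log-Lipschitz estimate. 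Two caveats. First, your optimization is completed only for $\eta=1$; for $\eta<1$ the same scheme yields $[h]_\eta\Vert w-w'\Vert^{\eta}(C+|\log\Vert w-w'\Vert|)$, not the first power of $\Vert w-w'\Vert$ printed in \eqref{eq:log_lip}. This is not a defect of your argument: for $\eta<1$ the bound as printed cannot hold (in $d=1$ the Stein equation gives $A''=h-\Phi_1(h)+wA'$ with $wA'\in C^1$, so $A''$ inherits exactly the $\eta$-H\"older modulus of $h$, e.g.\ for $h(x)=|x|^{1/2}$ near the origin), so the exponent $\eta$ is the correct reading; since the paper applies the lemma only with $\eta=1$, your argument covers everything that is actually used. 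Second, the explicit constant $C_\#$ is asserted rather than derived: you correctly identify the Gaussian moment $\mathbb{E}\Vert Z\Vert^\eta=2^{\eta/2}\Gamma((d+\eta)/2)/\Gamma(d/2)$, but the factor $(\eta+2d)/(\eta d)$ requires actually evaluating the two pieces of the split integral and the Hermite-moment constants, and one must also dispose of the easy regime $\Vert w-w'\Vert\ge 1$ (where the logarithm changes sign) using $\Vert D^2A\Vert_\infty\lesssim [h]_\eta/\eta$; these steps are routine but not optional if the stated value of $C_\#$ is to be claimed.
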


We will apply the result with $\eta = 1$. In this case the result is known to be optimal in terms of regularity of $D^2A$. More precisely it was shown in  \cite{Gallouet_2018}  that, when $d=2$ and $h(x,y) = \max \{0 , \min \{x,y\} \}$ (an example considered first by Rai\v{c} in \cite{Raic_2004}),
\begin{align*}
\partial_i \partial_j A(u,u) - \partial_i \partial_j A(0,0) \sim_{u \to 0+} \frac{1}{\sqrt{2\pi}} u \log u.
\end{align*}

\subsection{Sunklodas' decomposition.} Set $Y^i = b^{-1} \bar{f}^i$ so that 
\begin{align*}
W = \sum_{i=0}^{N-1} Y^i.
\end{align*}

Next, we define punctured modifications of the sum $W$, namely
\beqn
W^{n,m} = W - \sum_{i\in[n]_m} Y^i,
\eeqn
where
\beqn
[n]_m = \{0\le i < N : |i-n|\le m\}.
\eeqn
Moreover, set
\begin{align*}
Y^{n,m} = b^{-1} \bar{f}^{n,m} = \sum_{|i - n | = m} b^{-1} \bar{f}^i.
\end{align*}
Note that 
\beq\label{eq:W_cases_vector}
W = \sum_{k=0}^{N-1} Y^{n,k} = W^{n,-1} 
\quad\text{and}\quad
W^{n,N-1} = 0
\eeq
as well as
\beq\label{eq:Wnm_rec_vector}
W^{n,m-1} = W^{n,m} + Y^{n,m}
\eeq
for any $n$ and $m$.

The proofs for the main results are based on the following decomposition, which is a multivariate version of Proposition 4 in \cite{sunklodas2007} due to Sunklodas.

\begin{prop}\label{prop:sunklodas_multi}
Suppose $A \in C^2(\bR^d, \bR)$. Denote
\beqn
\delta^{n,m}(u) = D^2A(W^{n,m} + u\, Y^{n,m}) - D^2A(W^{n,m})
\eeqn
and
\beqn
\delta^{n,m} = \delta^{n,m}(1) = D^2 A(W^{n,m-1}) - D^2A(W^{n,m}).
\eeqn
Then
\beqn
\mu[ \textnormal{tr} \Sigma D^2 A(W) - W^T \nabla A(W)] = E_1 + \cdots + E_7,
\eeqn
where
\begin{align*}
E_1 & = - \sum_{n=0}^{N-1}\sum_{m=1}^{N-1} \int_0^1  \, \mu[  (Y^{n})^T \delta^{n,m}(u) Y^{n,m}   ] \, du,
\\
E_2 & = - \sum_{n=0}^{N-1} \int_0^1   \mu[ (Y^{n})^T \delta^{n,0}(u) Y^n ] \, du,
\\
E_3 & = - \sum_{n=0}^{N-1}\sum_{m=1}^{N-1} \sum_{k=m+1}^{2m}    \mu [  (Y^{n})^T \, \overline{\delta^{n,k}} \, Y^{n,m}  ],
\\
E_4 & = - \sum_{n=0}^{N-1}\sum_{m=1}^{N-1} \sum_{k=2m+1}^{N-1}     \mu [  (Y^{n})^T \, \overline{\delta^{n,k}} \, Y^{n,m}  ],
\\
E_5 & = - \sum_{n=0}^{N-1} \sum_{k=1}^{N-1}  \mu [  (Y^{n})^T \, \overline{\delta^{n,k}} \, Y^n  ],
\\
E_6 & = \sum_{n=0}^{N-1}\sum_{m=1}^{N-1}  \mu \left[  (Y^{n})^T   \sum_{k=0}^m \mu( \delta^{n,k} )  Y^{n,m}  \right],
\\
E_7 & =  \sum_{n=0}^{N-1}  \mu [  (Y^{n})^T   \mu( \delta^{n,0} )  Y^n  ].
\end{align*}

\end{prop}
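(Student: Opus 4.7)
The identity is a purely algebraic rearrangement obtained by expanding $\mu[W^T\nabla A(W)]$ and $\mu[\tr\Sigma D^2A(W)]$ separately in terms of $D^2A$ evaluated at the punctured points, and then regrouping pieces via the telescopes for $\delta^{n,k}$. I would begin with the gradient term: using~\eqref{eq:W_cases_vector}, \eqref{eq:Wnm_rec_vector} and the fundamental theorem of calculus,
\[
\nabla A(W) \;=\; \nabla A(0) + \sum_{m=0}^{N-1}\int_0^1 D^2A(W^{n,m} + uY^{n,m})\,Y^{n,m}\,du \qquad (0\le n\le N-1).
\]
Pairing with $(Y^n)^T$, taking $\mu$, using $\mu(Y^n)=0$ to eliminate $\nabla A(0)$, writing $D^2A(W^{n,m}+uY^{n,m})=D^2A(W^{n,m})+\delta^{n,m}(u)$, and isolating the degenerate index $m=0$ (where $Y^{n,0}=Y^n$) gives
\[
\mu[W^T\nabla A(W)] \;=\; \sum_{n,m}\mu\!\bigl[(Y^n)^T D^2A(W^{n,m})\,Y^{n,m}\bigr] \;-\; E_1 \;-\; E_2.
\]

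Next I would handle the trace term via the bilinear identity $\tr[\mu(Y^n\otimes Y^{n,k})M] = \mu[(Y^n)^T M\,Y^{n,k}]$, valid for any deterministic symmetric matrix $M\in\bR^{d\times d}$. Applied with $M=\mu[D^2A(W)]$ and combined with $\Sigma = \sum_{n,k}\mu(Y^n\otimes Y^{n,k})$, it yields
\[
\mu[\tr\Sigma D^2A(W)] \;=\; \sum_{n,k}\mu\!\bigl[(Y^n)^T D^2A(W)\,Y^{n,k}\bigr] \;-\; \sum_{n,k}\mu\!\bigl[(Y^n)^T\overline{D^2A(W)}\,Y^{n,k}\bigr].
\]
Subtracting the two expansions and relabelling $k\to m$ in the first summation on the right collapses the difference to
\[
\mu[\tr\Sigma D^2A(W) - W^T\nabla A(W)] \;=\; \sum_{n,m}\mu\!\bigl[(Y^n)^T\bigl(D^2A(W)-D^2A(W^{n,m})\bigr)Y^{n,m}\bigr] \;-\; \sum_{n,m}\mu\!\bigl[(Y^n)^T\overline{D^2A(W)}\,Y^{n,m}\bigr] \;+\; E_1 + E_2.
\]

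The remaining work is combinatorial bookkeeping. The two telescopes $D^2A(W)-D^2A(W^{n,m}) = \sum_{k=0}^{m}\delta^{n,k}$ and $\overline{D^2A(W)} = \sum_{k=0}^{N-1}\overline{\delta^{n,k}}$ (the constant $D^2A(0)=D^2A(W^{n,N-1})$ disappears under centring) reduce everything to sums in the $\delta^{n,k}$; splitting $\delta^{n,k}=\overline{\delta^{n,k}}+\mu(\delta^{n,k})$ in the first telescope, the mean-value pieces contribute $E_6$ for $m\ge 1$ and $E_7$ for $m=0$, while the $k\le m$ slice of $\sum_{k=0}^{m}\overline{\delta^{n,k}}$ cancels the matching slice of $\sum_{k=0}^{N-1}\overline{\delta^{n,k}}$, leaving a residual $k\ge m+1$ with an overall minus sign. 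Partitioning this residual as $\{1\le k\le N-1\}$ when $m=0$ and as $\{m+1\le k\le 2m\}\cup\{2m+1\le k\le N-1\}$ when $m\ge 1$ delivers $E_5$, $E_3$, and $E_4$ respectively, so the total equals $\sum_{i=1}^{7}E_i$. The main obstacle is this final partitioning: one must segregate the degenerate case $m=0$ so that $Y^{n,0}=Y^n$ is routed into $E_2,E_5,E_7$ rather than $E_1,E_3\cup E_4,E_6$, and place the threshold at exactly $k=2m$, since downstream the conditions (A2) and (A3) target the ranges $k\le 2m$ and $k\ge 2m+1$ separately.
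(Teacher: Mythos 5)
Your proposal is correct and takes essentially the same route as the paper's proof: telescoping along the punctured sums $W^{n,m}$, the fundamental theorem of calculus for $\nabla A(W)-\nabla A(0)$ with the degenerate index $m=0$ isolated, splitting $\delta^{n,k}$ into centered and mean parts, and the identical partition of the residual $k$-range at $k=2m$ producing $E_1,\dots,E_7$. The only cosmetic difference is that you center $D^2A(W)$ globally (telescoping $\overline{D^2A(W)}=\sum_{k=0}^{N-1}\overline{\delta^{n,k}}$) and cancel the $k\le m$ slice against the telescope of $D^2A(W)-D^2A(W^{n,m})$, whereas the paper centers $D^2A(W^{n,m})$ directly; the two bookkeepings are trivially equivalent.
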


\begin{proof} For any $n$, by~\eqref{eq:W_cases_vector},
\beqn
\nabla A(W) - \nabla A(0) = \nabla A(W^{n,-1}) - \nabla A(W^{n,N-1}) = \sum_{m=0}^{N-1} [\nabla A(W^{n,m-1}) - \nabla A(W^{n,m})].
\eeqn
By~\eqref{eq:Wnm_rec_vector},
\begin{align*}
&\nabla A(W^{n,m-1}) - \nabla A(W^{n,m}) \\
&= \!\left[ D^2 A(W^{n,m}) + \int_0^1 D^2 A(W^{n,m}+u\, Y^{n,m}) - D^2 A(W^{n,m})\,\rd u \right] Y^{n,m} \\
&= \!\left[ D^2 A(W^{n,m}) + \int_0^1  \delta^{n,m}(u) \,\rd u \right] Y^{n,m} .
\end{align*}
Since $\mu[ (Y^n)^T   \nabla A(0)] = 0$, it follows by the above identities that 
\begin{align*}
\mu[ W^T \nabla A(W) ] &= \sum_{n=0}^{N-1} \mu[ (Y^n)^T (\nabla A(W)- \nabla A(0))] \\
&= -E_1 - E_2  + \sum_{n=0}^{N-1}\sum_{m=0}^{N-1} \mu[ (Y^{n})^T  D^2A (W^{n,m}) Y^{n,m} ].
\end{align*}
Note that
\begin{align*}
\mu[ \textnormal{tr} \Sigma D^2 A(W) ] = \mu[W^T \mu(D^2A(W)) W]
 = \sum_{n=0}^{N-1}\sum_{m=0}^{N-1} \mu [ (Y^{n})^T \mu (D^2A(W)) Y^{n,m}],
\end{align*}
so what remains of $\mu[ \textnormal{tr} \Sigma D^2 A(W) - W^T \nabla A(W)]$ after subtracting $E_1$ and $E_2$ is
\beqn
\begin{split}
& \sum_{n=0}^{N-1}\sum_{m=0}^{N-1} \mu [ (Y^{n})^T \mu (D^2A(W)) Y^{n,m}  - (Y^{n})^T  D^2A(W^{n,m}) Y^{n,m} ]  
\\
=\ & \sum_{n=0}^{N-1}\sum_{m=0}^{N-1} \!\left( \mu [ (Y^{n})^T \mu (D^2A(W) - D^2A(W^{n,m})) Y^{n,m} ] - \mu [ (Y^{n})^T  \overline{D^2A(W^{n,m})} Y^{n,m} ]  \right),
\end{split}
\eeqn
where
\beqn
\overline{D^2A(W^{n,m})} = D^2A(W^{n,m}) - \mu[D^2A(W^{n,m}) ].
\eeqn
Next note that 
\beqn
D^2 A(W^{n,m}) - D^2A(0) = D^2A(W^{n,m}) - D^2A(W^{n,N-1}) = \sum_{k=m+1}^{N-1} \delta^{n,k}.
\eeqn
Since $\mu[  (Y^{n})^T \overline{D^2A(0)} Y^{n,m}  ] = 0$, this yields
\beqn
\begin{split}
& \sum_{n=0}^{N-1}\sum_{m=0}^{N-1} \mu [ (Y^{n})^T  \overline{D^2A(W^{n,m})} Y^{n,m} ] =  \sum_{n=0}^{N-1}\sum_{m=0}^{N-1} \sum_{k=m+1}^{N-1}  \mu [ (Y^{n})^T  \overline{\delta^{n,k} } Y^{n,m} ]
\\
&= \sum_{n=0}^{N-1}\sum_{m=1}^{N-1} \sum_{k=m+1}^{2m}   \mu [ (Y^{n})^T  \overline{\delta^{n,k} } Y^{n,m} ] + \sum_{n=0}^{N-1}\sum_{m=1}^{N-1} \sum_{k=2m+1}^{N-1}   \mu [ (Y^{n})^T  \overline{\delta^{n,k} } Y^{n,m} ]\\
&+ \sum_{n=0}^{N-1} \sum_{k=1}^{N-1}   \mu [ (Y^{n})^T  \overline{\delta^{n,k} } Y^n ]
\\
&= -E_3 - E_4 - E_5.
\end{split}
\eeqn
Finally, since
\beqn
D^2 A(W) - D^2A(W^{n,m}) = D^2A(W^{n,-1}) - D^2A(W^{n,m}) = \sum_{k=0}^m\delta^{n,k},
\eeqn
we have 
\beqn
\begin{split}
& \sum_{n=0}^{N-1}\sum_{m=0}^{N-1} \mu [ (Y^{n})^T \mu (D^2A(W) - D^2A(W^{n,m})) Y^{n,m} ] 
\\
=\ & \sum_{n=0}^{N-1}\sum_{m=1}^{N-1} \mu \left[ (Y^{n})^T \sum_{k=0}^m \mu (\delta^{n,k}) Y^{n,m} \right] 
 +  \sum_{n=0}^{N-1} \mu [ (Y^{n})^T  \mu (\delta^{n,0}) Y^{n,m} ]  = E_6 + E_7.
\end{split}
\eeqn
This completes the proof for Proposition \ref{prop:sunklodas_multi}.
\end{proof}

\subsection{Proof for Theorem \ref{thm:main_multi_general} }

 We gather in the following lemma some  useful basic inequalities involving the spectral norm.

\begin{lem}\label{lem:spectral} For all $A,B \in \bR^{d\times d}$, $x \in \bR^d$, and $\alpha, \beta \in \{1,\ldots, d\}$:
\begin{itemize}
\item[(i)]{$\Vert A x \Vert \le \Vert A \Vert_s \Vert x \Vert$;} \smallskip
\item[(ii)]{$\Vert AB \Vert_s \le \Vert A \Vert_s \Vert B \Vert_s$;} \smallskip
\item[(iii)]{$|A_{\alpha\beta} | \le \Vert A \Vert_s \le \left( \max_{1 \le j \le d} \sum_{i=1}^d |A_{ij}| \right)^{\frac12} \left(\max_{1 \le i \le d} \sum_{j=1}^d |A_{ij}| \right)^{\frac12}$;} \smallskip
\item[(iv)]{$| \textnormal{tr} A | \le d \Vert A \Vert_s$;}\smallskip
\item[(v)]{$\Vert A \Vert_s = \sqrt{\lambda_{\max}(A^TA) } \le \sqrt{ \textnormal{tr} \, A^TA}$, where $\lambda_{\max}(A^TA)$ denotes the largest eigenvalue of the positive-semidefinite matrix $A^TA$.}
\end{itemize}
\end{lem}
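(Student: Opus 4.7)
The lemma collects five standard inequalities, so the plan is to treat each item in turn, relying only on the definition $\Vert A\Vert_s = \sup_{\Vert y\Vert=1}\Vert Ay\Vert$ and a few elementary facts about symmetric positive-semidefinite matrices. Most items are routine; the only part that requires a little thought is the right-hand inequality in (iii), which is the Riesz--Thorin / interpolation bound relating $\Vert\cdot\Vert_s$ to the maximum row and column $\ell^1$-norms.

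For (i), if $x=0$ the inequality is trivial, otherwise apply the definition of $\Vert A\Vert_s$ to the unit vector $x/\Vert x\Vert$. Item (ii) is submultiplicativity: for any unit $x$, $\Vert ABx\Vert\le\Vert A\Vert_s\Vert Bx\Vert\le\Vert A\Vert_s\Vert B\Vert_s$ by two uses of (i), then take the sup over $x$. For the left half of (iii), take $x=e_\beta$ so that $Ax$ has $\alpha$-th component $A_{\alpha\beta}$, and combine $|A_{\alpha\beta}|\le\Vert Ae_\beta\Vert$ with (i). Item (iv) follows from (iii) by writing $|\operatorname{tr}A|\le\sum_i |A_{ii}|\le d\max_i|A_{ii}|\le d\Vert A\Vert_s$. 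For (v), recall that $A^TA$ is symmetric positive-semidefinite, so its spectral norm equals its largest eigenvalue $\lambda_{\max}(A^TA)$; on the other hand $\Vert A\Vert_s^2=\sup_{\Vert x\Vert=1}\langle A^TAx,x\rangle=\lambda_{\max}(A^TA)$ by the Rayleigh quotient. The inequality $\lambda_{\max}(A^TA)\le\operatorname{tr}(A^TA)$ then follows from the fact that $A^TA$ has non-negative eigenvalues summing to its trace.

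The one non-trivial step is the right-hand bound in (iii), namely $\Vert A\Vert_s\le\bigl(\max_j\sum_i|A_{ij}|\bigr)^{1/2}\bigl(\max_i\sum_j|A_{ij}|\bigr)^{1/2}$. I would argue as follows. Write $\Vert A\Vert_s^2=\Vert A^TA\Vert_s$ and recall that for any $B\in\bR^{d\times d}$ one has $\Vert B\Vert_s\le\sqrt{\Vert B\Vert_1\Vert B\Vert_\infty}$, where $\Vert B\Vert_1=\max_j\sum_i|B_{ij}|$ and $\Vert B\Vert_\infty=\max_i\sum_j|B_{ij}|$; this in turn follows from Hölder's inequality applied to $|\langle Bx,y\rangle|=|\sum_{i,j}B_{ij}x_jy_i|$ with the split $|B_{ij}|=|B_{ij}|^{1/2}|B_{ij}|^{1/2}$. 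Alternatively one can appeal to the identity $\Vert A\Vert_s=\sup_{\Vert x\Vert=\Vert y\Vert=1}|y^TAx|$ and directly estimate $|y^TAx|\le\sum_{i,j}|A_{ij}||x_j||y_i|$ by Cauchy--Schwarz in the form $\sum_{i,j}|A_{ij}||x_j||y_i|\le\bigl(\sum_{i,j}|A_{ij}|x_j^2\bigr)^{1/2}\bigl(\sum_{i,j}|A_{ij}|y_i^2\bigr)^{1/2}$, then bound each factor by its row- or column-sum maximum. This is the only place where a small computation is needed; everything else is a one-line consequence of the definitions.
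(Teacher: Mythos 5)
Your proof is correct. Note that the paper itself offers no proof of this lemma --- it is presented as a collection of standard linear-algebra facts --- so there is no argument of the authors' to compare against; your job here is simply to supply the routine verifications, and you do. Items (i), (ii), (iv), the left half of (iii), and (v) via the Rayleigh quotient and the trace bound are exactly the one-line arguments one expects. For the right half of (iii), your second (``alternative'') route is the complete and correct one: estimate $|y^TAx|\le\sum_{i,j}|A_{ij}||x_j||y_i|$, split $|A_{ij}|=|A_{ij}|^{1/2}|A_{ij}|^{1/2}$, apply Cauchy--Schwarz to get $\bigl(\sum_{i,j}|A_{ij}|x_j^2\bigr)^{1/2}\bigl(\sum_{i,j}|A_{ij}|y_i^2\bigr)^{1/2}$, and bound the two factors by the maximal column and row sums respectively (the Schur test). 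Your first route, by contrast, is circular as phrased: you ``recall'' for general $B$ precisely the inequality being proved and pass through $\Vert A\Vert_s^2=\Vert A^TA\Vert_s$, a detour that is unnecessary (it can be repaired using submultiplicativity of the induced $\ell^1$ and $\ell^\infty$ norms and $\Vert A^T\Vert_1=\Vert A\Vert_\infty$, but the direct Cauchy--Schwarz argument you give afterwards makes this moot). I would simply delete the first route and keep the second.
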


\begin{lem} Let \(h: \, \bR^d \to \bR\) be three times differentiable with \(\Vert D^k h \Vert_{\infty} < \infty\) for \(1 \le k \le 3\) and let $A$ be the function \eqref{eq:stein_solution} that solves Stein's equation. Define $\delta^{n,k}(u)$ as in Proposition \ref{prop:sunklodas_multi}. Then, conditions (A2) and (A3) imply that, for all $0 \le n,m \le N-1$, the following two conditions hold:
\begin{itemize}
\item[(A2')]{Whenever \(u \in [0,1]\) and $m \le k \le N-1$,
\begin{align*}
| \mu[  (Y^{n})^T \delta^{n,k}(u) Y^{n,m}  ]| \le C_2 5M d^2 \Vert D^3 h \Vert_{\infty} \Vert b^{-1}  \Vert_s^3  \rho(m).
\end{align*}
} 
\item[(A3')]{Whenever \(2m \le k \le N-1\), 
\begin{align*}
| \mu [  (Y^{n})^T \, \overline{\delta^{n,k}} \, Y^{n,m}  ]| \le  C_3 5M d^2 \Vert D^3 h \Vert_{\infty} \Vert b^{-1}  \Vert_s^3  \rho(k -m).
\end{align*}}
\end{itemize}

\end{lem}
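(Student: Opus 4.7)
The plan is to express $\delta^{n,k}(u)$ explicitly via the integral representation \eqref{eq:stein_solution} so that the integrand is recognizable as an instance of $G_h$. Differentiating \eqref{eq:stein_solution} twice under the integral sign gives
\[
D^2 A(w) = -\int_0^\infty e^{-2s}\int_{\bR^d} D^2 h\bigl(e^{-s}w + \sqrt{1 - e^{-2s}}\,z\bigr)\,\phi_\Sigma(z)\,dz\,ds.
\]
Using $W^{n,k} = b^{-1}\sum_{0 \le i < N,\,|i-n|>k}\bar f^i$ and $Y^{n,k} = b^{-1}\bar f^{n,k}$ and then substituting $s \mapsto -\log s$ in the outer integral transforms $\delta^{n,k}(u)$ into
\[
-\int_0^1 s\int_{\bR^d} b\, G_h^{(s,\,u,\,\sqrt{1-s^2}\,z)}\!\Bigl(\textstyle\sum_{|i-n|>k}\bar f^i,\bar f^{n,k}\Bigr) b\,\phi_\Sigma(z)\,dz\,ds,
\]
with parameters $(s,u,\sqrt{1-s^2}z) \in [0,1]^2\times\bR^d$ as demanded by conditions (A2) and (A3).

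Sandwiching this representation of $\delta^{n,k}(u)$ between $Y^n$ and $Y^{n,m}$, the outer $b$'s cancel against the $b^{-1}$'s coming from $Y^n$ and $Y^{n,m}$, and Fubini moves the $\mu$-expectation inside the $(s,z)$-integrals. Condition (A2) is then applied pointwise in $(s,u,z)$ to bound the resulting expectation of $(\bar f^n)^T G_h^{(s,u,\sqrt{1-s^2}z)}(\ldots)\bar f^{n,m}$ by $C_2(\|G_h\|_\infty + \|\nabla G_h\|_\infty)\rho(m)$, and $\int_0^1 s\,ds = 1/2$ together with $\int \phi_\Sigma\,dz = 1$ absorb the outer integrals. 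For the sup-norms, the mean value theorem applied entry-wise to $D^2 h(sb^{-1}(x+ty)+z) - D^2 h(sb^{-1}x+z)$ bounds each entry by $\sqrt{d}\|D^3 h\|_\infty \|b^{-1}\|_s\|y\|$; combining with $\|y\| < 4M+1$ and the estimate $\|M\|_s \le d\max_{ij}|M_{ij}|$ from Lemma \ref{lem:spectral} yields $\|G_h\|_\infty \le d^{3/2}(4M+1)\|D^3 h\|_\infty\|b^{-1}\|_s^3$. The analogous computation for $\partial_{x_i}G_h$ and $\partial_{y_i}G_h$ (with one extra factor $\sqrt{d}\|b^{-1}\|_s$ from differentiating through an inner $b^{-1}$) gives $\|\nabla G_h\|_\infty \le d^{3/2}\|D^3 h\|_\infty\|b^{-1}\|_s^3$. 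Collecting these and absorbing numerical factors produces (A2') with the claimed constant $5Md^2$.

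Condition (A3') follows from exactly the same reasoning applied to $\delta^{n,k} = \delta^{n,k}(1)$: since $\mu$-centering commutes with the outer integrals in $(s,z)$, the centered integrand is $b\,\overline{G_h^{(s,1,\sqrt{1-s^2}z)}}(\ldots,\ldots)\,b$, and condition (A3) is invoked under the standing assumption $2m \le k$ to conclude the corresponding bound with $\rho(k-m)$ in place of $\rho(m)$. The only real obstacle will be the careful bookkeeping required to extract the precise constant $5Md^2$ from the mean value estimates; the structural content of the argument is simply the integral representation followed by Fubini and a pointwise application of the hypotheses.
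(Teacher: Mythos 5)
Your proposal is correct and follows essentially the same route as the paper: differentiate \eqref{eq:stein_solution} twice under the integral sign, recognize the integrand of $\delta^{n,k}(u)$ as $b\,G_h\,b$ with parameters in $[0,1]^2\times\bR^d$ (your substitution $s\mapsto e^{-s}$ is purely cosmetic), apply Fubini and conditions (A2)/(A3) pointwise in the outer variables, and control $\Vert G_h\Vert_\infty$ and $\Vert \nabla G_h\Vert_\infty$ by $\Vert D^3h\Vert_\infty\Vert b^{-1}\Vert_s^3$ via the mean value theorem and Lemma \ref{lem:spectral}, exactly as in the paper's \eqref{eq:G_sup}--\eqref{eq:G_partial}. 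The only slip is that your bound on $\partial_{x_i}G_h$ should carry an extra factor $2$ (both terms of the difference depend on $x$), but this is the same kind of constant bookkeeping the paper itself absorbs into the factor $5Md^2$.
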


\begin{proof}  We denote
\begin{align*}
G_h(x,y) = G_h(x,y; s,t,z) = b^{-1} \left[ D^2h(s b^{-1} (  x + t y ) + z) - D^2h(sb^{-1}x + z) \right] b^{-1},
\end{align*}
where $(s,t,z) \in [0,1]^2 \times \bR^d$. Then,
\begin{align*}
|( b G_h b)_{\alpha,\beta}(x,y) | \le  \sup_{\xi} \Vert \nabla \partial_{\alpha,\beta} h(\xi) \Vert \Vert b^{-1} st y \Vert \le \sqrt{d} \Vert D^3 h \Vert_{\infty} \Vert b^{-1}  \Vert_s \Vert y \Vert,
\end{align*}
which together with Lemma \ref{lem:spectral}  implies
\begin{align}\label{eq:bound_delta}
\Vert ( b G_h b) (x,y) \Vert_s  \le d^2 \Vert D^3 h \Vert_{\infty} \Vert b^{-1}  \Vert_s \Vert y \Vert.
\end{align}
Hence,
\begin{align}\label{eq:G_sup}
\Vert G_h(x,y) \Vert_s \le d^2 \Vert D^3 h \Vert_{\infty} \Vert b^{-1}  \Vert_s^3 \Vert y \Vert.
\end{align}
Similarly we see that, for all $1 \le i \le 2d$,
\begin{align}\label{eq:G_partial}
\Vert \partial_i G_h(x,y) \Vert_s \le 2 d^2 \Vert D^3 h \Vert_{\infty} \Vert b^{-1}  \Vert_s^3.
\end{align}

For (A1') Suppose that \(m \le k \le N-1\). Recall from Lemma \ref{lem:stein_solution} that
\begin{align*}
&A(w)= -\int_0^{\infty} \! \left\{\int_{\bR^d} h(e^{-s}w + \sqrt{1 - e^{-2s}}\,z)\,\phi_\Sigma(z)\,dz - \Phi_{\Sigma}(h)\right\} \, ds
\end{align*}
solves the Stein equation \eqref{eq:stein_multi}. Since $h$ is three times differentiable with \(\Vert D^k h \Vert_{\infty} < \infty\) for \(1 \le k \le 3\), we can use dominated convergence to compute
\begin{align*}
D^2A(w) = - \int_0^{\infty} e^{-2s} \int_{\bR^d} D^2 h(e^{-s}w + \sqrt{1-e^{-2s} }z ) \phi_{\Sigma}(z) \, dz \, ds.
\end{align*}
Recall that, for a function $F : \bR^d \times B_d(0,4M+1) \to \bR^{d\times d}$, we denote
\begin{align*}
\Vert F  \Vert_\infty =  \sup \{ \Vert F(x,y) \Vert_s \, : \, (x,y)  \in \bR^d \times B_d(0,4M+1)  \}  
\end{align*}
and
\begin{align*}
\Vert \nabla F \Vert_\infty =  \max_{1 \le i \le 2d} \sup \{ \Vert \partial_i F(x,y) \Vert_s \, : \, (x,y)  \in \bR^d \times B_d(0,4M+1) \}.
\end{align*}
By Fubini's theorem,
\begin{align*}
& \mu[  (Y^{n})^T \delta^{n,k}(u) Y^{n,m}   ]\\
&= - \int_0^{\infty} e^{-2s} \int_{\bR^d}   \mu [  (Y^{n})^T (  D^2 h(e^{-s}(W^{n,k} + uY^{n,m} ) + \sqrt{1-e^{-2s}}z ) \\
&- D^2h(e^{-s}W^{n,k} + \sqrt{1-e^{-2s}}z  )) Y^{n,m}  ] \phi_{\Sigma}(z) \, dz \, ds \\
&= - \int_0^{\infty} e^{-2s} \int_{\bR^d}   \mu \left[  (\bar{f}^{n})^T G_h \left( \sum_{|i - n| > m} \bar{f}^i, \bar{f}^{n,m} ; e^{-s}, u, \sqrt{1 - e^{-2s}} z    \right) \bar{f}^{n,m}   \right] \phi_{\Sigma}(z) \, dz \, ds,
\end{align*}
so that an application of condition (A2) combined with \eqref{eq:G_sup} and \eqref{eq:G_partial}  yields
\begin{align*}
& |\mu[  (Y^{n})^T \delta^{n,k}(u) Y^{n,m}  ]|\\
&\le C_2 \left( \Vert G_h \Vert_{\infty} + \Vert \nabla G_h\Vert_{\infty} \right) \rho (m)\int_0^{\infty} e^{-2s} \int_{\bR^d}  \phi_{\Sigma}(z) \, dz \, ds \\
&= \frac{C_2}{2}  \left( \Vert G_h \Vert_{\infty} + \Vert \nabla G_h\Vert_{\infty} \right) \rho (m) \\
&\le  2 C_2 5M d^2 \Vert D^3 h \Vert_{\infty} \Vert b^{-1}  \Vert_s^3   \rho(m),
\end{align*}
which proves condition (A2'). The proof for condition (A3') is essentially the same which is why we omit it.
\end{proof}

We now proceed to show Theorem \ref{thm:main_multi_general}. Combining Lemma \ref{lem:stein_solution} with Proposition \ref{prop:sunklodas_multi} yields
\begin{align*}
|\mu[h(W)] - \Phi_{\Sigma}(h)| = |\mu[ \textnormal{tr} \Sigma D^2 A(W) - W^T \nabla A(W)]| \le \sum_{i=1}^7 |E_i|,
\end{align*}
where $A$ is given by \eqref{eq:stein_solution} and $E_i$ are as in Proposition \ref{prop:sunklodas_multi}. We bound each term $E_i$ separately, using conditions (A1), (A2') and (A3').

By condition (A2'),
\begin{align*}
|E_1| &=  \left| \sum_{n=0}^{N-1}\sum_{m=1}^{N-1} \int_0^1    | \mu[  (Y^{n})^T \delta^{n,m}(u) Y^{n,m}   ] \, du \right| 
\le  \sum_{n=0}^{N-1}\sum_{m=1}^{N-1} C_2 5M d^2 \Vert D^3 h \Vert_{\infty} \Vert b^{-1}  \Vert_s^3  \rho(m).\\
&= C_2 5M d^2 \Vert D^3 h \Vert_{\infty} N \Vert b^{-1}  \Vert_s^3  \sum_{m=1}^{N-1} \rho(m).
\end{align*}
Moreover, 
\begin{align*}
|E_2| &= \left| \sum_{n=0}^{N-1} \int_0^1   | \mu[ (Y^{n})^T \delta^{n,0}(u) Y^n ]  \, du \right|
\le \sum_{n=0}^{N-1} \int_0^1     \mu[ \Vert Y^n \Vert \Vert  \delta^{n,0}(u) \Vert_s \Vert Y^n \Vert ]   \, du\\
&\le N \Vert b^{-1} \Vert_s^2 4M^2 \cdot d^2 \Vert D^3 h \Vert_{\infty} \Vert b^{-1}  \Vert_s (4M + 1) \\
&\le 20 M^3 d^2\Vert D^3 h \Vert_{\infty} N  \Vert b^{-1}  \Vert_s^3,
\end{align*}
where \eqref{eq:bound_delta} was used in the third inequality.

For $E_3$ first note that 
\begin{align*}
\mu [  (Y^{n})^T \mu( \delta^{n,k})  Y^{n,m}  ] = \text{tr} \,  \mu( Y^n \otimes Y^{n,m} ) \,  \mu(  \delta^{n,k} ),
\end{align*}
so that Lemma \ref{lem:spectral}  and condition (A1) can be used to obtain
\begin{align}
| \mu [  (Y^{n})^T \mu( \delta^{n,k})  Y^{n,m}  ] | &\le d \Vert  \mu( Y^n \otimes Y^{n,m} )   \Vert_s \Vert \mu(  \delta^{n,k} ) \Vert_s \notag \\
&\le d \Vert b^{-1} \Vert_s^2 \Vert  \mu( \bar{f}^n \otimes \bar{f}^{n,m} )   \Vert_s \Vert \mu(  \delta^{n,k} ) \Vert_s \notag \\
&\le d^2 \Vert b^{-1} \Vert_s^2 \Vert  C_1 \rho(m) \cdot  d^2 \Vert D^3 h \Vert_{\infty} \Vert b^{-1}  \Vert_s (4M + 1) \notag \\
&\le C_1 5M d^4 \Vert D^3 h \Vert_{\infty} \Vert b^{-1} \Vert_s^3  \rho(m). \label{eq:bound_a1}
\end{align}
Combinining \eqref{eq:bound_a1} with an application of condition (A2') yields
\begin{align*}
|E_3| & = \left| \sum_{n=0}^{N-1}\sum_{m=1}^{N-1} \sum_{k=m+1}^{2m} \mu [  (Y^{n})^T \, \overline{\delta^{n,k}} \, Y^{n,m}  ] \right| \\
&\le \sum_{n=0}^{N-1}\sum_{m=1}^{N-1} \sum_{k=m+1}^{2m} (  | \mu [   (Y^{n})^T \delta^{n,k}  Y^{n,m}  ]| + |\mu [  (Y^{n})^T \mu( \delta^{n,k})  Y^{n,m}  ] | ) \\
&\le  \left(  C_2  d^2   +  C_1  d^4  \right)  5M \Vert D^3 h \Vert_{\infty} \Vert b^{-1} \Vert_s^3  N \sum_{m=1}^{N-1} m \rho(m).
\end{align*}

Condition (A3') is used to bound \(E_4\) and \(E_5\):
\begin{align*}
|E_4| &= \left|  \sum_{n=0}^{N-1}\sum_{m=1}^{N-1} \sum_{k=2m+1}^{N-1}    | \mu [  (Y^{n})^T \, \overline{\delta^{n,k}} \, Y^{n,m}  ] \right| \\
&\le   C_3 5M d^2 \Vert D^3 h \Vert_{\infty}  N  \Vert b^{-1}  \Vert_s^3  \sum_{m=1}^{N-1} m \rho (m),
\end{align*}
and
\begin{align*}
|E_5| &= \left| \sum_{n=0}^{N-1} \sum_{k=1}^{N-1}  |\mu [  (Y^{n})^T \, \overline{\delta^{n,k}} \, Y^n  ] \right|
 \le   C_3 5M d^2 \Vert D^3 h \Vert_{\infty} N \Vert b^{-1}  \Vert_s^3    \sum_{m=1}^{N-1}  \rho(m).
\end{align*}

Again by  \eqref{eq:bound_a1}, 
\begin{align*}
|E_6| =  \left| \sum_{n=0}^{N-1}\sum_{m=1}^{N-1} \sum_{k=0}^m   \mu [  (Y^{n})^T    \mu( \delta^{n,k} )  Y^{n,m}  ] \right| 
\le   C_1 5M d^4 \Vert D^3 h \Vert_{\infty} N \Vert b^{-1} \Vert_s^3 \sum_{m=1}^{N-1} m  \rho(m).
\end{align*}

Finally,
\begin{align*}
|E_7| &= \left|  \sum_{n=0}^{N-1}   \mu [  (Y^{n})^T   \mu( \delta^{n,0} )  Y^n  ] \right|
\le  \sum_{n=0}^{N-1}  \mu[ \Vert Y^n \Vert^2  ] \Vert \mu(\delta^{n,0})  \Vert_s   \\
&\le 20 M^3  d^2 \Vert D^3 h \Vert_{\infty}  N \Vert b^{-1}  \Vert_s^3.
\end{align*}

Gathering the foregoing upper bounds we obtain 
\begin{align*}
&|\mu[h(W)] - \Phi_{\Sigma}(h)|\\
&\le N  \Vert b^{-1}  \Vert_s^3 \sum_{m=1}^{N-1} m\rho(m)  \Vert D^3 h \Vert_{\infty} \Biggl(   C_2 5M d^2 + 20 M^3 d^2 + \left(  C_2  d^2   +  C_1  d^4  \right)  5M \\
&+  2 C_3 5M d^2 +  C_1 5M d^4  + 20 M^3  d^2\Biggr)\\
&\le  N  \Vert b^{-1}  \Vert_s^3 \sum_{m=1}^{N-1} m\rho(m)  \Vert D^3 h \Vert_{\infty} M^3 d^4 10 ( C_1 + C_2 + C_3 + 4  ).
\end{align*}
The proof for Theorem  \ref{thm:main_multi_general} is complete.

\subsection{Proof for Theorem \ref{thm:main_1d}}  Since the proof for Theorem \ref{thm:main_1d} is very similar to the proof for Theorem  \ref{thm:main_multi_general}, we omit most of the details and only give an outline, emphasizing differences between the two proofs. 

Now $b^2 =  \text{Var}_\mu( \sum_{i < N} \bar{f}^i ) > 0$ so that $\text{Var}_\mu(W) = \mu(W^2) = 1$. Then the univariate Stein equation is defined by 
\beq\label{eq:stein_univ}
 A'(w) - wA(w) = h(w) - \Phi_{1}(h),
\eeq
where $w \in \bR$. Note that the order of \eqref{eq:stein_univ} is one smaller than the order of the multivariate Stein equation \eqref{eq:stein_multi}. We have the following result regarding the regularity of $A$:

\begin{lem}[See~\cite{Chen_etal_2011}]\label{lem:univ_stein_sol} Whenever $h : \bR \to \bR$ is Lipschitz continuous with $\textnormal{Lip}(h) \le 1$ the solution $A : \bR \to \bR$ to \eqref{eq:stein_univ} belongs to the class $\mathscr{F}_1$ consisting of all differentiable functions with an absolutely continuous derivative, satisfying the bounds
\beqn
\|A\|_\infty \le 2, 
\quad
\|A'\|_\infty \le \sqrt{2/\pi}, \, 
\quad\text{and}\quad
\|A''\|_\infty \le 2.
\eeqn
\end{lem}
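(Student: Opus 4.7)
The strategy is to analyze an explicit formula for the solution to the first-order Stein equation \eqref{eq:stein_univ}. Writing $\tilde h = h - \Phi_1(h)$ and multiplying by the integrating factor $e^{-w^2/2}$, the unique bounded solution is
\beqn
A(w) = e^{w^2/2}\int_{-\infty}^{w} \tilde h(t)\, e^{-t^2/2}\, dt = -e^{w^2/2}\int_{w}^{\infty}\tilde h(t)\, e^{-t^2/2}\, dt,
\eeqn
the two representations coinciding because $\int_{\bR}\tilde h(t)\, e^{-t^2/2}\,dt = 0$. Differentiability of $A$ is built in, and the identity $A'(w) = wA(w) + \tilde h(w)$ shows that $A'$ is the sum of a $C^1$ function and a Lipschitz function, hence absolutely continuous.

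For $\|A\|_\infty \le 2$, I would first use the Lipschitz assumption to write $\tilde h(t) = \mathbb{E}[h(t) - h(Z)]$ with $Z \sim \cN(0,1)$, so that $|\tilde h(t)| \le \mathbb{E}|t-Z|$. Depending on the sign of $w$, I would apply whichever of the two representations above keeps the relevant range of integration bounded in $w$, and then compare the weighted integral to the Gaussian Mills ratio to extract the constant $2$.

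For the sharp bounds $\|A'\|_\infty \le \sqrt{2/\pi}$ and $\|A''\|_\infty \le 2$, the crucial issue is that $h$ is only Lipschitz, so one cannot simply differentiate $h$ twice. My plan is to use the Ornstein--Uhlenbeck semigroup representation $A = B'$, where $B$ is the function \eqref{eq:stein_solution} specialized to $d=1$, $\Sigma=1$; explicitly
\beqn
A(w) = -\int_0^\infty e^{-s}\,\mathbb{E}\bigl[\tilde h'(e^{-s}w + \sqrt{1-e^{-2s}}Z)\bigr]\, ds.
\eeqn
Differentiating once in $w$ and applying Gaussian integration by parts in $Z$ to shift the extra $w$-derivative into a Hermite factor on $Z$ yields
\beqn
A'(w) = -\int_0^\infty \frac{e^{-2s}}{\sqrt{1-e^{-2s}}}\,\mathbb{E}\bigl[Z \cdot \tilde h'(e^{-s}w + \sqrt{1-e^{-2s}}Z)\bigr]\, ds,
\eeqn
so the a.e.\ Lipschitz bound $|\tilde h'|\le 1$ gives $|A'(w)| \le \sqrt{2/\pi}\int_0^\infty e^{-2s}/\sqrt{1-e^{-2s}}\,ds = \sqrt{2/\pi}$ after the substitution $u = 1 - e^{-2s}$. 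A parallel computation, differentiating once more in $w$ and applying Gaussian integration by parts a second time, delivers $\|A''\|_\infty \le 2$.

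The principal obstacle is sharpness of the constants rather than existence or regularity: iterating the Stein identity $A'' = A + wA' + h'$ directly produces bounds contaminated by the unbounded factor $w A'(w)$, and comparable difficulties arise for $\|A'\|_\infty$. The Gaussian-by-parts step in the OU representation is what supplies the cancellation yielding the sharp constants $\sqrt{2/\pi}$ and $2$, and I expect this to be the only nontrivial piece of the argument.
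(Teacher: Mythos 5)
First, a point of comparison: the paper does not prove this lemma at all --- it is quoted from Chen--Goldstein--Shao \cite{Chen_etal_2011} --- so any self-contained argument is already doing more than the text. Your plan for the first two bounds is sound: the integrating-factor formula together with $|\tilde h(t)|\le \mathbb{E}|t-Z|\le |t|+\sqrt{2/\pi}$ and the Mills-ratio estimate $e^{w^2/2}\int_w^\infty e^{-t^2/2}\,dt\le\sqrt{\pi/2}$ gives $\|A\|_\infty\le 2$, and the identification $A=B'$ with one Gaussian integration by parts gives $|A'(w)|\le \mathbb{E}|Z|\int_0^\infty e^{-2s}(1-e^{-2s})^{-1/2}\,ds=\sqrt{2/\pi}$, exactly as you say (modulo the routine justification of differentiating under the $ds$-integral and the remark that $B'$ is bounded, hence coincides with the bounded solution given by the explicit formula).

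The gap is in the third bound. Carrying out your ``parallel computation'' gives $A''(w)=-\int_0^\infty e^{-3s}(1-e^{-2s})^{-1}\,\mathbb{E}\bigl[(Z^2-1)\,h'(e^{-s}w+\sqrt{1-e^{-2s}}\,Z)\bigr]\,ds$, and since $h$ is only Lipschitz the only uniform estimate available for the expectation is $|\mathbb{E}[(Z^2-1)h'(\cdot)]|\le \mathbb{E}|Z^2-1|$, a constant; but $\int_0^\infty e^{-3s}(1-e^{-2s})^{-1}\,ds$ diverges logarithmically at $s=0$ (the integrand behaves like $1/(2s)$), so this route yields no finite bound, let alone the constant $2$. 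The divergence is intrinsic: near $s=0$ the Gaussian smoothing cannot absorb two derivatives when only one derivative of $h$ is available, which is exactly why in the multivariate, second-order setting the third derivative of the solution is genuinely unbounded for Lipschitz $h$ and one only gets the log-Lipschitz estimate of Lemma \ref{lem:solution_self}; splitting the $s$-integral does not help, because at small $s$ there is no alternative estimate without invoking $h''$. In $d=1$ the bound $\|A''\|_\infty\le 2$ is nevertheless true, but the proof is the classical one in \cite{Chen_etal_2011}: differentiate \eqref{eq:stein_univ} and substitute it back to get $A''(w)=(1+w^2)A(w)+w\tilde h(w)+h'(w)$ a.e., then show from the one-sided explicit formulas, after writing $\tilde h$ as an integral against $h'$, that $|(1+w^2)A(w)+w\tilde h(w)|\le \textnormal{Lip}(h)$ --- the ``contamination'' by the unbounded factors cancels. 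That is precisely the route you set aside, and some version of it is needed to close the argument.
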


The lemma implies that
\begin{align*}
d_{\mathscr{W}}(W,Z) \le \sup_{A \in \mathscr{F}_1} |\mu[A'(W) - WA(W) ] |,
\end{align*}
where $Z \sim \cN(0,1)$. Next $\mu[A'(W) - WA(W) ]$ is decomposed precisely as in Proposition 4 of \cite{sunklodas2007}.  The decomposition is the same as that given in Proposition \ref{prop:sunklodas_multi} except that  $\delta^{n,m}(u)$ there is replaced with
\begin{align*}
\delta^{n,m}(u) = A'(W^{n,m} + u\, Y^{n,m}) - A'(W^{n,m}).
\end{align*}
Then
\begin{align*}
\delta_{n,m}(u) = G_u \Biggl( \sum_{|i - n| > m} \bar{f}^i, \bar{f}^{n,m}   \Biggl),
\end{align*}
where 
\begin{align*}
G_u(x,y) =   A'\left( b^{-1}x + b^{-1}u y \right) - A'\left( b^{-1} x  \right).
\end{align*}
By Lemma \ref{lem:univ_stein_sol}
\begin{align*}
| G_u(x,y) | \le \text{Lip}(A')| b^{-1}u y | \le 2 b^{-1} |y|
\end{align*}
and
\begin{align*}
\text{Lip}(G_u) \le 4 b^{-1}.
\end{align*}
Hence, conditions (B2) and (B3) can be applied with $G_u \upharpoonright ( \bR \times B_1(0, 4M+ 1))$ as in the proof for Theorem \ref{thm:main_multi_general}. Using also condition (B1) we obtain bounds to each of the terms  $E_i$ appearing in the univariate version of Proposition  \ref{prop:sunklodas_multi}, which then lead to the upper bound \eqref{eq:scalar_bound}.

\subsection{Proof for Theorem \ref{thm:main_self}} From now on we assume that $\operatorname{Cov}_\mu \left(\sum_{i=0}^{N-1} \bar f^i \right)$ is positive definite and take 
\begin{align*}
b = \left[ \operatorname{Cov}_\mu \left(\sum_{i=0}^{N-1} \bar f^i \right)  \right]^{1/2} ,
\end{align*}
in which case $\Sigma = \mu(W \otimes W) = I_{d\times d}$. By Lemma \ref{lem:spectral},
\begin{align*}
\Vert b^{-1} \Vert^2_s = \lambda_{\max}   \left( \left[ \operatorname{Cov}_\mu \left(\sum_{i=0}^{N-1} \bar f^i\right)  \right]^{-1}  \right) = \lambda_{\min}^{-1},
\end{align*}
where we recall that $\lambda_{\min}$ is the least eigenvalue of $\operatorname{Cov}_\mu(\sum_{i=0}^{N-1} \bar f^i)$. 

By Lemma \ref{lem:solution_self},
\begin{align*}
d_{\mathscr{W}}(W, Z) \le \sup_{A \in \cA} |  \mu[ \tr  D^2A(W) - W^T  \nabla A(W) ]|,
\end{align*}
where $Z \sim \cN(0, I_{d\times d})$ and $\cA$ denotes the class of all $C^2$ functions satisfying \eqref{eq:log_lip}. The proof then proceeds as follows. First we decompose $\mu[ \tr  D^2A(W) - W^T  \nabla A(W) ] = \sum_{i=1}^7 E_i$ using Proposition \ref{prop:sunklodas_multi}, which reduces the proof to bounding each term $E_i$ for functions $A \in \cA$. For example, to obtain an upper bound on $E_1$ we have to control the integral
\begin{align*}
 \int_0^1  \, \mu[  (Y^{n})^T \delta^{n,m}(u) Y^{n,m}   ] \, du,
\end{align*}
where we recall that $\delta^{n,m}(u) = D^2A(W^{n,m} + u\, Y^{n,m}) - D^2A(W^{n,m})$. To this end we will describe a class $\cG$ of regular functions $G : \bR^d \times \bR^d \to \bR^{d \times d}$ such that 
\begin{align}\label{eq:e_1_integral}
 &\int_0^1  \, \mu[  (Y^{n})^T \delta^{n,m}(u) Y^{n,m}   ] \, du 
 =  \mu \left[  ( \bar{f}^{n})^T   G \left( \sum_{|i - n| > k} \bar{f}^i, \bar{f}^{n,k}   \right)  \bar{f}^{n,m}   \right].
\end{align}
The integral on the right is bounded by condition (C2), provided that $G$ is a $C^1$-function. This might not be the case, since functions in $\cG$ will have the same regularity as the second derivatives of functions in $\cA$, which according to Lemma \ref{lem:solution_self} is  Lipschitz up to a logarithmic factor. But we can approximate such functions by  $C^\infty$-functions, which in combination with condition (C2) then leads to an upper bound on \eqref{eq:e_1_integral} and consequently on $E_1$. The other terms $E_i$ will be treated similarly. We now proceed to detail the foregoing argument.

We denote by $\cG$ the collection of all functions $G : \bR^d \times \bR^d \to \bR^{d \times d}$  that satisfy the following upper bounds: 
\begin{align*}
\sup_{x} \Vert G(x,y) \Vert_s \le  K  \lambda_{\min}^{-\frac32} (1+  \log N  ) ( \Vert y \Vert^2 + 1) \hspace{0.5cm} \forall y \in \bR^d; 
\end{align*}
\begin{align*}
&\sup_{y } \Vert  G(a,y) - G(a',y)  \Vert_s \\
&\le K \lambda_{\min}^{-\frac32} (1 + \log N) \Vert a - a' \Vert^{} ( 1 + |\log \Vert a - a' \Vert   | )  \hspace{0.5cm} \forall a,a' \in \bR^d;
\end{align*}
\begin{align*}
&\sup_{x } \Vert  G(x,a) - G(x,a')  \Vert_s \\
&\le K \lambda_{\min}^{-\frac32} (1 + \log N) \Vert a - a' \Vert^{} ( 1 + |\log \Vert a - a' \Vert   | )  \hspace{0.5cm} \forall a,a' \in \bR^d.
\end{align*}
where $K =  2C_\# + \sqrt{d}4M + 2 $  and $C_\#$ is the constant from Lemma \ref{lem:solution_self} with $\eta = 1$. \medskip

\begin{lem}\label{lem:class_G} Assume $\lambda_{\min} > 1$. Then, given any $A \in \cA$ and $0 \le n,m \le N-1$, there is a function $G_u : \bR^d \times \bR^d \to \bR^{d \times d}$ satisfying
\begin{align}\label{eq:func_class}
G_u \left( \sum_{|i - n| > m} \bar{f}^i, \bar{f}^{n,m}   \right) =     b^{-1} \,   \delta^{n,m}(u) b^{-1},
\end{align}
where $\delta^{n,m}(u)$ is defined as in Proposition \ref{prop:sunklodas_multi}, such that
\begin{align*}
G_{1}  \in \cG\hspace{0.2cm} \text{and} \hspace{0.2cm} G = \int_0^1 G_{u} \, du \in \cG.
\end{align*}
\end{lem}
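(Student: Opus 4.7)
The natural candidate is
$G_u(x,y) = b^{-1}\left[D^2A(b^{-1}x + u\,b^{-1}y) - D^2A(b^{-1}x)\right]b^{-1}$;
since $W^{n,m} = b^{-1}\sum_{|i-n|>m}\bar f^i$ and $Y^{n,m} = b^{-1}\bar f^{n,m}$, the identity~\eqref{eq:func_class} is immediate. Wasserstein test functions are $1$-Lipschitz, so Lemma~\ref{lem:solution_self} with $\eta=1$ applies to $A \in \cA$ and yields the log-Lipschitz bound $\Vert D^2A(w) - D^2A(w')\Vert_s \le \Vert w-w'\Vert(C_\# + |\log \Vert w-w'\Vert|)$ for all $w,w' \in \bR^d$. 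Combining this with $\Vert b^{-1}\Vert_s^2 = \lambda_{\min}^{-1}$ and $\Vert b^{-1}y\Vert \le \lambda_{\min}^{-1/2}\Vert y\Vert$ would give, uniformly in $x$, the estimate $\Vert G_1(x,y)\Vert_s \le \lambda_{\min}^{-3/2}\Vert y\Vert(C_\# + |\log \Vert b^{-1}y\Vert|)$. The two Lipschitz differences have the same structure: expanding $G_1(a,y)-G_1(a',y)$ into two pairs of $D^2A$-differences (respectively $G_1(x,a)-G_1(x,a')$ into one pair), each at distance $\Vert b^{-1}(a-a')\Vert$, yields the bound $2\lambda_{\min}^{-3/2}\Vert a-a'\Vert(C_\# + |\log \Vert b^{-1}(a-a')\Vert|)$ (respectively the same quantity without the factor~$2$).

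The second step is to transfer the logarithms from $\Vert b^{-1}\cdot\Vert$ to $\Vert\cdot\Vert$ with only a logarithmic penalty in $N$. I would exploit the two-sided bound $\Vert y\Vert/\Vert b\Vert_s \le \Vert b^{-1}y\Vert \le \lambda_{\min}^{-1/2}\Vert y\Vert$ together with the crude upper bound $\Vert b\Vert_s \le 2\sqrt{d}MN$, which follows from Lemma~\ref{lem:spectral}(iii) applied to the covariance matrix and the trivial entry-wise estimate $|\mu(\bar f^n_\alpha \bar f^m_\beta)| \le 4M^2$. This gives $|\log \Vert b^{-1}y\Vert| \le |\log \Vert y\Vert| + \log(2\sqrt{d}MN)$, and the elementary inequalities $\Vert y\Vert \le \Vert y\Vert^2+1$ and $r|\log r| \le r^2 + 1$ then compress the sup-bound for $G_1$ to $(C_\# + 1 + \log(2\sqrt{d}M) + \log N)\lambda_{\min}^{-3/2}(\Vert y\Vert^2+1)$; this fits the target form $K\lambda_{\min}^{-3/2}(1+\log N)(\Vert y\Vert^2+1)$ provided $K \ge C_\# + 1 + \log(2\sqrt{d}M)$. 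Analogous manipulations on the Lipschitz bounds reduce to the requirement $K \ge 2C_\# + 2\log(2\sqrt{d}M) + 2$. The stated value $K = 2C_\# + 4\sqrt{d}M + 2$ meets both demands because $4\sqrt{d}M \ge 2\log(2\sqrt{d}M)$ whenever $\sqrt{d}M \ge 1$.

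For $G = \int_0^1 G_u\,du$, I would apply the same pointwise estimate to each $G_u$. Using $|\log(uz)| \le |\log u| + |\log z|$ one gets $\Vert G_u(x,y)\Vert_s \le \lambda_{\min}^{-1}\,u\Vert b^{-1}y\Vert(C_\# + |\log u| + |\log \Vert b^{-1}y\Vert|)$ (and similarly for the Lipschitz differences), and integrating with $\int_0^1 u\,du = \tfrac12$ and $\int_0^1 u|\log u|\,du = \tfrac14$ produces a bound of the same structural shape as for $G_1$, with only slightly better constants; the same $K$ therefore works for $G$ as well. The main obstacle is precisely the constant bookkeeping described in the second paragraph: one must be attentive about how $|\log \Vert b^{-1}y\Vert|$ decomposes into $|\log \Vert y\Vert| + O(\log N)$, and then verify that the explicit $K = 2C_\# + 4\sqrt{d}M + 2$ is large enough to absorb the accumulated contributions from $C_\#$, the crude bound $\Vert b\Vert_s \le 2\sqrt{d}MN$, and the elementary inequalities on $r|\log r|$. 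No new ideas beyond Lemma~\ref{lem:solution_self} and the elementary spectral estimates of Lemma~\ref{lem:spectral} are required.
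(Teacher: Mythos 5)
Your proposal is correct and follows essentially the same route as the paper: the same candidate $G_u(x,y)=b^{-1}[D^2A(b^{-1}x+u\,b^{-1}y)-D^2A(b^{-1}x)]b^{-1}$, the log-Lipschitz bound of Lemma~\ref{lem:solution_self} with $\eta=1$, the spectral estimates $\Vert b^{-1}\Vert_s^2=\lambda_{\min}^{-1}$ and $\Vert b\Vert_s\le 2\sqrt{d}MN$ to trade $|\log\Vert b^{-1}\cdot\Vert|$ for $|\log\Vert\cdot\Vert|+O(\log N)$, and the same bookkeeping leading to $K=2C_\#+4\sqrt{d}M+2$. The only cosmetic differences (retaining $\log(2\sqrt{d}M)$ instead of bounding it by $2\sqrt{d}M$ at once, and deriving $\Vert b\Vert_s\le 2\sqrt{d}MN$ entry-wise rather than via the trace) do not change the argument.
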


\begin{proof} It is easy to see that \eqref{eq:func_class} holds with $G_u(x,y)$ defined as 
\begin{align*}
 b^{-1} \left[ D^2A\left( b^{-1}x + b^{-1}u y \right) - D^2A\left( b^{-1} x  \right) \right]  b^{-1},
\end{align*}
We show that $G \in \cG$ and leave the similar verification of $G_1 \in \cG$ to the reader. 

Observe that, by Lemma \ref{lem:spectral} and \eqref{eq:log_lip},
\begin{align}\label{eq:aux_g}
\Vert G_u(x,y) \Vert_s \le  \Vert b^{-1} \Vert_s^2 \Vert u b^{-1}y \Vert ( C_\# + | \log \Vert u b^{-1} y \Vert | ) 
\end{align}
holds for all $x \in \bR^d$, $y \in \bR^d \setminus \{ 0 \}$, and $u \in (0,1]$. Then assume (as we may) that $y \neq 0$. We use  \eqref{eq:aux_g} and $\Vert b^{-1} \Vert_s = \lambda_{\min}^{-1/2} <  1$ to obtain 
\begin{align*}
\Vert G(x,y) \Vert_s 
&\le  \int_0^1 \Vert G_u(x,y) \Vert_s \, du  \le \Vert b^{-1} \Vert_s^2 \int_0^1 \Vert u b^{-1}y \Vert ( C_\# + | \log \Vert u b^{-1} y \Vert | ) \, du \\
&\le \Vert b^{-1} \Vert_s^3 \Vert  y \Vert ( C_\# + 1 + | \log \Vert  b^{-1} y \Vert | ) \\
&\le \lambda_{\min}^{-\frac32} ( 1 + \Vert y \Vert^2 )   \left(  C_\# + 2 + \log \Vert b^{-1} \Vert_s^{-1}   \right).
\end{align*}
Since $1\le \Vert b \Vert_s \Vert b^{-1}  \Vert_s$, 
\begin{align}\label{eq:b_spec_ineq}
\Vert b^{-1}  \Vert^{-1}_s \le \Vert b \Vert_s \le  \left(\text{tr} \operatorname{Cov}_\mu \left(\sum_{i=0}^{N-1} \bar f^i \right) \right)^{1/2}  \le \sqrt{d} 2 M N,
\end{align}
where we used Lemma \ref{lem:spectral}.  Hence,
\begin{align*}
\Vert G(x,y) \Vert_s &\le  \lambda_{\min}^{-\frac32} ( 1 + \Vert y \Vert^2 ) ( \log N + 1)  (  C_\# + 2 + \sqrt{d}2M  ).
\end{align*}

Next let $a,a',y \in \bR^d$. Then,
\begin{align*}
\Vert G(a,y) - G(a',y) \Vert_s &\le \int_0^1 \Vert G_u(a,y) - G_u(a',y) \Vert_s  \, du \\
&\le 2 \Vert b^{-1} \Vert_s^2 \left( \Vert b^{-1}( a - a') \Vert ( C_\# + | \log \Vert b^{-1}(a - a' ) \Vert | )   \right) \\
&\le 2 \Vert b^{-1} \Vert_s^3 \left(  \Vert a - a' \Vert ( C_\# + \log( \sqrt{d} 2MN) )  + \Vert a - a' \Vert | \log \Vert a - a' \Vert |  \right) \\
&\le 2 \lambda_{\min}^{-\frac32} \Vert a - a' \Vert ( 1 + | \log \Vert a - a' \Vert |   ) ( C_\# + \sqrt{d}2M + \log N + 1 ) \\
&\le  \lambda_{\min}^{-\frac32} \Vert a - a' \Vert ( 1 + | \log \Vert a - a' \Vert |   )( \log N + 1) ( 2C_\# + \sqrt{d}4M + 2 ),
\end{align*}
where \eqref{eq:aux_g} was used in the second inequality, and \eqref{eq:b_spec_ineq} in the third inequality.

Finally, for all $a,a',x \in \bR^d$,
\begin{align*}
\Vert G(x,a) - G(x,a') \Vert_s &\le \int_0^1 \Vert G_u(x,a) - G_u(x,a') \Vert_s  \, du\\
&\le \Vert b^{-1} \Vert_s^2 \int_0^1 \Vert u b^{-1} ( a - a' ) \Vert ( C_\# + | \log \Vert u b^{-1} (a - a' ) \Vert | ) \, du \\
&\le \Vert b^{-1} \Vert_s^3  \Vert  a - a' \Vert ( C_\# + 1 +   \log \Vert b^{-1} \Vert^{-1} +| \log \Vert a - a'  \Vert | ) \\
&\le  \Vert b^{-1} \Vert_s^3  \Vert  a - a' \Vert ( 1 + | \log \Vert a - a'  \Vert | ) ( C_\# + \sqrt{d}2M + \log N + 1 ) \\
&\le \Vert b^{-1} \Vert_s^3  \Vert  a - a' \Vert ( 1 + | \log \Vert a - a'  \Vert | ) (\log N + 1) ( C_\# + \sqrt{d}2M + 1),
\end{align*}
where \eqref{eq:b_spec_ineq} was used in the second last inequality. This completes the proof for $G \in \cG$.
\end{proof}

The following lemma is established by a standard approximation argument. See Appendix \ref{sec:approx-lemma-pf} for the proof.

\begin{lem}\label{lem:new_cond} Conditions (C2) and (C3) imply that, for all $0 \le n,m \le N-1$, the following two conditions hold:
\begin{itemize}
\item[(C2')]{Whenever $m \le k \le N-1$ and $G \in \cG$,
\begin{align*}
&\left| \mu \left[  ( \bar{f}^{n})^T   G \left( \sum_{|i - n| > k} \bar{f}^i, \bar{f}^{n,k}   \right)  \bar{f}^{n,m}   \right] \right|  \notag \\
&\le  C_2'  \lambda_{\min}^{-\frac32} (1 + \log N) (1 + \log (\rho(m)^{-1}) )  \rho ( m ),
\end{align*} 
where
\begin{align*}
C_2' =  C_2 d 4^d K673 M^2   \left( 1 + \tfrac{1}{\rho(0)} \right)   ( 2 \rho(0) + 1 ).
\end{align*}
\smallskip
}
\item[(C3')]{Whenever $2m \le k \le N-1$ and $G \in \cG$,
\begin{align*}
&\left| \mu \left[  ( \bar{f}^{n})^T   \overline{G \left( \sum_{|i - n| > k} \bar{f}^i, \bar{f}^{n,k}   \right)}  \bar{f}^{n,m}   \right] \right|  \notag \\
&\le C_3'  \lambda_{\min}^{-\frac32} (1 + \log N) (1 + \log (\rho(k-m)^{-1} ))  \rho ( k-m ),
\end{align*}
where
\begin{align*}
C_3' =  C_3 d 4^d K673 M^2   \left( 1 + \tfrac{1}{\rho(0)} \right)   ( 2 \rho(0) + 1 ).
\end{align*}}
\end{itemize}
\end{lem}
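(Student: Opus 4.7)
The plan is to reduce conditions (C2') and (C3') to (C2) and (C3) by a mollification argument: approximate any $G \in \cG$ by a $C^{\infty}$-function $G_{\epsilon}$ with bounded gradient, apply (C2) or (C3) to $G_{\epsilon}$, and control the approximation error using the log-Lipschitz regularity built into the definition of $\cG$.

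First I would fix a nonnegative $C^{\infty}$ bump function $\varphi : \bR^{2d} \to \bR$ supported in the unit ball with $\int\varphi\,dm = 1$, set $\varphi_{\epsilon}(z) = \epsilon^{-2d}\varphi(z/\epsilon)$ for $\epsilon \in (0,\tfrac12]$, and define $G_{\epsilon}$ componentwise by $(G_{\epsilon})_{\alpha\beta} = G_{\alpha\beta} * \varphi_{\epsilon}$, convolution being taken in all $2d$ variables. Writing $L = K\lambda_{\min}^{-3/2}(1+\log N)$, I would establish, on $\bR^{d}\times B_{d}(0,4M+1)$, the three estimates
\begin{align*}
\Vert G_{\epsilon} - G\Vert_{\infty} \le c_{1}\,L\,\epsilon(1+|\log\epsilon|), \quad \Vert\nabla G_{\epsilon}\Vert_{\infty} \le c_{2}\,L\,(1+|\log\epsilon|), \quad \Vert G_{\epsilon}\Vert_{\infty} \le c_{3}\,L,
\end{align*}
where $c_{i} = c_{i}(d,M,\varphi)$. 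The first and third are immediate from the sup and log-Lipschitz clauses in the definition of $\cG$ (recalling $\Vert\bar f^{n,k}\Vert\le 4M\sqrt d$, so the relevant $y$-argument lies in $B_{d}(0,4M+1)$). The second bound is the crucial one: writing $\partial_{i}G_{\epsilon}(w) = \int[G(w-z) - G(w)]\partial_{i}\varphi_{\epsilon}(z)\,dz$ and plugging in the log-Lipschitz inequality replaces the naive $O(\epsilon^{-1})$ bound by the much better $O(1+|\log\epsilon|)$; this is the whole point of building the log-modulus into $\cG$.

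Setting $X = \sum_{|i-n|>k}\bar f^{i}$, condition (C2) applies to $G_{\epsilon}$ and gives
\begin{align*}
\left| \mu\bigl[(\bar f^{n})^{T} G_{\epsilon}(X,\bar f^{n,k})\bar f^{n,m}\bigr] \right| \le C_{2}(\Vert G_{\epsilon}\Vert_{\infty} + \Vert\nabla G_{\epsilon}\Vert_{\infty})\rho(m),
\end{align*}
while the error from replacing $G$ with $G_{\epsilon}$ is bounded by
\begin{align*}
\mu(\Vert\bar f^{n}\Vert\,\Vert\bar f^{n,m}\Vert)\,\Vert G - G_{\epsilon}\Vert_{\infty} \le c_{4}\, dM^{2}\, L\,\epsilon(1+|\log\epsilon|).
\end{align*}
Adding the two contributions yields a total bound of order $L(1+|\log\epsilon|)(\rho(m)+\epsilon)$ times constants depending on $C_{2},M,d$. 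Choosing $\epsilon$ comparable to $\rho(m)$ but rescaled to guarantee $\epsilon \le 1/2$ uniformly in $m$ (a choice such as $\epsilon = \rho(m)/(2\rho(0)+1)$ works since $\rho$ is non-increasing), I obtain the rate $L(1+|\log\rho(m)|)\rho(m)$, i.e.\ the form claimed in (C2'). The factor $(1+\rho(0)^{-1})(2\rho(0)+1)$ in the stated constant $C_{2}'$ arises precisely from this rescaling of $\epsilon$, while the $4^{d}$ comes from bounding the $L^{1}$-norm of $\nabla\varphi_{\epsilon}$ on $\bR^{2d}$ when $\varphi$ is chosen as a product of one-dimensional bumps.

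The argument for (C3') is identical, applied to the centered function $G_{\epsilon} - \mu[G_{\epsilon}(X,\bar f^{n,k})]$: centering at most doubles $\Vert G_{\epsilon}\Vert_{\infty}$, leaves $\Vert\nabla G_{\epsilon}\Vert_{\infty}$ unchanged, and the rate $\rho(k-m)$ replaces $\rho(m)$ throughout; the optimal choice becomes $\epsilon \asymp \rho(k-m)$. The only real technical obstacle is the careful bookkeeping of constants ($K$, $M$, $d$, $\rho(0)$, and those from the mollifier) to recover the explicit forms of $C_{2}'$ and $C_{3}'$ stated in the lemma; conceptually the proof is the standard smoothing reduction that converts a log-modulus of continuity into a uniform $C^{1}$-bound at the cost of a logarithmic factor.
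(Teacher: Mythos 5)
Your proposal is correct and follows essentially the same route as the paper's Appendix~\ref{sec:approx-lemma-pf}: componentwise mollification of $G$, the key identity $\partial_i G_\ve(w)=\int [G(w-z)-G(w)]\,\partial_i\varphi_\ve(z)\,dz$ turning the log-Lipschitz modulus of $\cG$ into an $O(\log \ve^{-1})$ gradient bound, application of (C2)/(C3) to the mollified function plus an $O(M^2\Vert G_\ve-G\Vert_\infty)$ replacement error, and the choice $\ve\asymp\rho(m)/\rho(0)$ (the paper takes $\ve=\tfrac12\rho(m)/\rho(0)$), with (C3') handled verbatim via the centered function. The only nit is your bound $\Vert \bar f^{n,k}\Vert\le 4M\sqrt d$: the correct (and needed) bound is $\Vert \bar f^{n,k}\Vert\le 4M<4M+1$ since $\Vert g^i\Vert_\infty\le M$ is already in the Euclidean norm, so the argument is unaffected.
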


We proceed to bound the terms $E_i$ in Proposition \ref{prop:sunklodas_multi} using conditions (C1), (C2') and (C3'). Let $G_u$ be a function as in Lemma \ref{lem:class_G} and set $G = \int_0^1 G_u \, du$. Then for \(E_1\) we have by condition (C2') the upper bound
\begin{align*}
|E_1| &= \left| \sum_{n=0}^{N-1}\sum_{m=1}^{N-1}  \mu\left[  ( \bar{f}^n )^T  G \left( \sum_{|i - n| > m} \bar{f}^i, \bar{f}^{n,m}   \right) \bar{f}^{n,m}  \right]\right| \\
&\le  \sum_{n=0}^{N-1}\sum_{m=1}^{N-1} C_2'  \lambda_{\min}^{-\frac32} (1 + \log N) (1 + \log (\rho(m)^{-1}) )  \rho ( m ) \\
&=  N(1 + \log N)\lambda_{\min}^{-\frac32} C_2'  \sum_{m=1}^{N-1}    (1 + \log (\rho(m)^{-1}) )  \rho ( m ).
\end{align*}

Since $G \in \cG$,
\begin{align*}
|E_2| &= \left| \sum_{n=0}^{N-1}   \mu\left[  (\bar{f}^{n})^T G \left( \sum_{|i - n| > 0} \bar{f}^i, \bar{f}^{n}   \right)  \bar{f}^{n}   \right]\right| \\
&\le N 4M^2 K  \lambda_{\min}^{-\frac32} (1+  \log N  ) ( (2M)^2 + 1) \\
&\le N(1 + \log N)\lambda_{\min}^{-\frac32} K 20M^4.
\end{align*}

For $E_3$ we note that 
\begin{align*}
\mu [  (Y^{n})^T \mu( \delta^{n,k})  Y^{n,m}  ] = \text{tr} \,  \mu( \bar{f}^n \otimes \bar{f}^{n,m} ) \,  \mu\left[G_1 \left( \sum_{|i - n| > k} \bar{f}^i, \bar{f}^{n,k}   \right)\right].
\end{align*}
Hence, by Lemma \ref{lem:spectral}  and condition (C1),
\begin{align}\label{eq:proof_main_3_eq1}
&|\mu [  (Y^{n})^T \mu( \delta^{n,k})  Y^{n,m}  ]| \notag \\
&\le d \Vert \mu( \bar{f}^n \otimes \bar{f}^{n,m} )\Vert_s    \Vert G_1 \Vert_\infty \notag \\
&\le d^2 C_1 \rho(m) K  \lambda_{\min}^{-\frac32} (1+  \log N  ) ( (4M + 1 )^2 + 1).
\end{align}
Combining \eqref{eq:proof_main_3_eq1} with condition  (C2') implies the upper bound
\begin{align*}
|E_3| & \le \sum_{n=0}^{N-1}\sum_{m=1}^{N-1} \sum_{k=m+1}^{2m} | \mu [  (Y^{n})^T \, \overline{\delta^{n,k}} \, Y^{n,m}  ] |\\
&\le \sum_{n=0}^{N-1}\sum_{m=1}^{N-1} \sum_{k=m+1}^{2m} (  | \mu [  (Y^{n})^T \delta^{n,k}  Y^{n,m}  ]| + |\mu [  (Y^{n})^T \mu( \delta^{n,k})  Y^{n,m}  ] | ) \\
&\le \sum_{n=0}^{N-1}\sum_{m=1}^{N-1} \sum_{k=m+1}^{2m} \left|  \mu\left[  ( \bar{f}^n )^T  G_1 \left( \sum_{|i - n| > k} \bar{f}^i, \bar{f}^{n,k}   \right) \bar{f}^{n,m}  \right] \right| \\
&+ \sum_{n=0}^{N-1}\sum_{m=1}^{N-1} \sum_{k=m+1}^{2m} d^2 C_1 \rho(m) K  \lambda_{\min}^{-\frac32} (1+  \log N  ) ( ( 4M + 1 )^2 + 1)\\
&\le  N(1 + \log N )\lambda_{\min}^{-\frac{3}{2}} \Biggl(  \sum_{m=1}^{N-1}   C_2'  (1 + \log (\rho(m)^{-1} )) m \rho ( m )\\ &+ \sum_{m=1}^{N-1} d^2 C_1 m\rho(m) K  ( \Vert 4M + 1 \Vert^2 + 1) \Biggr) \\
&\le N(1 + \log N )\lambda_{\min}^{-\frac{3}{2}} (2C_2' + d^2C_1K52M^2 ) \sum_{m=1}^{N-1}  (1 + \log (\rho(m)^{-1}) ) m \rho ( m ).
\end{align*}

Next condition (C3') is used to bound \(E_4\) and \(E_5\):
\begin{align*}
|E_4| &= \left|  \sum_{n=0}^{N-1}\sum_{m=1}^{N-1} \sum_{k=2m+1}^{N-1}  \mu [  (\bar{f}^n )^T \, \overline{G_1 \left( \sum_{|i - n| > k} \bar{f}^i, \bar{f}^{n,k}   \right) } \, \bar{f}^{n,m}  ] \right| \\
&\le N \sum_{m=1}^{N-1} \sum_{k=2m+1}^{N-1} C_3'  \lambda_{\min}^{-\frac32} (1 + \log N) (1 + \log (\rho(k-m)^{-1} ))  \rho ( k-m )\\
&\le  N(1 + \log N )\lambda_{\min}^{-\frac{3}{2}} C_3'  \sum_{m=1}^{N-1}  (1 + \log (\rho(m)^{-1} )) m  \rho ( m ),
\end{align*}
and
\begin{align*}
|E_5| &= \left| \sum_{n=0}^{N-1} \sum_{k=1}^{N-1}  \mu \left[  (\bar{f}^{n})^T \,   \overline{G_1 \left( \sum_{|i - n| > k} \bar{f}^i, \bar{f}^{n,k}   \right) }  \, \bar{f}^n  \right] \right| \\
&\le  N    (1 + \log N )  \lambda_{\min}^{-\frac{3}{2}} C_3'   \sum_{m=1}^{N-1}  (1 + \log (\rho(m)^{-1} ))  \rho ( m ).
\end{align*}

Again by \eqref{eq:proof_main_3_eq1},
\begin{align*}
|E_6| &= \left|  \sum_{n=0}^{N-1}\sum_{m=1}^{N-1} \sum_{k=0}^m \mu \left[  (Y^{n})^T    \mu( \delta^{n,k} )  Y^{n,m}  \right] \right| \\
&\le \sum_{n=0}^{N-1}\sum_{m=1}^{N-1} \sum_{k=0}^m d^2 C_1 \rho(m) K  \lambda_{\min}^{-\frac32} (1+  \log N  ) ( ( 4M + 1 )^2 + 1).\\
&= N   (1+  \log N  )  \lambda_{\min}^{-\frac{3}{2}}  d^2C_1K  26M^2    \sum_{k=1}^{N-1} m \rho(m).
\end{align*}

Finally,
\begin{align*}
|E_7| &= \left|  \sum_{n=0}^{N-1}   \mu \left[  ( \bar{f}^{n})^T   \mu\left( G_1 \left( \sum_{|i - n| > 0} \bar{f}^i, \bar{f}^{n}   \right) \right)     \bar{f}^n  \right] \right| \\
&\le N 4M^2   \Vert G_1 \Vert_\infty 
\le  N(1+  \log N  ) \lambda_{\min}^{-\frac32}  104 M^4 K .
\end{align*}

Recall that, by Lemma \ref{lem:solution_self},
\begin{align*}
d_{\mathscr{W}}(W,Z)  \le \sup_{A \in \cA} |\mu[ \tr  D^2A(W) - W^T  \nabla A(W) ]|.
\end{align*}
Hence, Proposition \ref{prop:sunklodas_multi} together with the above bounds implies
\begin{align*}
d_{\mathscr{W}}(W,Z) &\le N ( 1 + \log N ) \lambda_{\min}^{-\frac{3}{2}} \biggl[   (3C_2' + 2C_3' + d^2C_1K52M^2 ) \sum_{m=1}^{N-1}  (1 + \log (\rho(m)^{-1} )) m \rho ( m )\\
&+ K 20M^4  + d^2C_1K  26M^2    \sum_{k=1}^{N-1} m \rho(m) + 104 M^4 K 
\biggr] \\
&\le N ( 1 + \log N ) \lambda_{\min}^{-\frac{3}{2}} \sum_{m=1}^{N-1}  (1 + \log (\rho(m)^{-1} ) ) m \rho ( m )  \biggl[   3C_2' + 2C_3'  \\
&+ d^2C_1K  78M^2    + 124 M^4 K 
\biggr].
\end{align*}
The proof for Theorem \ref{thm:main_self} is complete.

\appendix

\section{Proof for Lemma \ref{lem:new_cond} }\label{sec:approx-lemma-pf}

Let us define the mollifier $\eta : \bR^d \to \bR$ by $\eta(x) = c \varphi(1 - \Vert x \Vert^2 )$ where
\begin{align*}
\varphi(t) = \begin{cases} 
e^{-1/t^2} &\text{if} \hspace{0.5cm} t > 0, \\
0 &\text{if} \hspace{0.5cm} t \le 0,
\end{cases}
\end{align*}
and $c > 0$ is such that $\int_{\bR^d} \eta(x) \, dx = 1$. Then  
\begin{align}
c^{-1} &=  \int \varphi(1 - \Vert x \Vert^2) \, dx  \ge \int_{B_d(0,1/2)} \varphi(1 - \Vert x \Vert^2 )  \, dx \ge \varphi(\tfrac34) m(B_d(0,\tfrac12)) \notag \\
&\ge e^{-2} (\tfrac12)^d m(B_d(0,1)). \label{eq:c_lb}
\end{align}

Let $G \in \cG$. We approximate the components of $G$ by convolutions $G^{\ve}_{\alpha,\beta} : \bR^d \times \bR^d \to \bR$,
\begin{align*}
G^{\ve}_{\alpha, \beta}(x) = \int_{\bR^d \times \bR^d} G_{\alpha,\beta}(y) j_{\ve}(x-y) \, dy = \int_{\bR^d \times \bR^d} G_{\alpha,\beta}(x-\ve y) j(y) \, dy,
\end{align*}
where $x = (x_1,x_2) \in \bR^d \times \bR^d$, $j_\ve(x) = \ve^{-2d} j( x / \ve)$, and $j(x) =\eta(x_1) \eta(x_2)$.

 For all $\alpha, \beta \in \{1,\ldots, d\}$, $x = (x_1,x_2) \in \bR^d \times \bR^d$, and $\ve \in (0,1)$:
\begin{align*}
&| G_{\alpha,\beta}^{\ve}(x) - G_{\alpha,\beta}(x) |  \le \int_{\bR^d \times \bR^d} |G_{\alpha,\beta}(x-\ve y) - G_{\alpha,\beta}(x)  | j(y) \, dy \\
&\le \int_{B_d(0,1) \times B_d(0,1)} \Vert G(x-\ve y) - G(x)  \Vert_s j(y) \, dy \\
&\le K \lambda_{\min}^{-\frac32} (1 + \log N ) \int_{B_d(0,1) \times B_d(0,1)} \left[ \Vert \ve y_1 \Vert (1 + \log \Vert \ve y_1 \Vert^{-1} ) + \Vert \ve y_2 \Vert (1 + \log \Vert \ve y_2 \Vert^{-1} )   \right] j(y) \, dy \\
&\le K \lambda_{\min}^{-\frac32} (1 + \log N ) 6 \ve \log \ve^{-1}.
\end{align*}
Lemma \ref{lem:spectral} was used in the second inequality and $G \in \cG$ in the third inequality. It follows by Lemma \ref{lem:spectral} that 
\begin{align}\label{eq:approx_1}
\Vert G^{\ve}(x) - G(x) \Vert_s  \le d K \lambda_{\min}^{-\frac32} (1 + \log N ) 6 \ve \log \ve^{-1}.
\end{align}

Since $G \in \cG$, 
\begin{align*}
| G_{\alpha,\beta}^{\ve}(x) | &\le \sup_{y_i : \Vert y_i \Vert \le \Vert x_i \Vert + \ve} | G_{\alpha,\beta}(y_1,y_2) | \le \sup_{y_i : \Vert y \Vert \le \Vert x_i \Vert + \ve}  \Vert G(y_1,y_2) \Vert_s \\
&\le K  \lambda_{\min}^{-\frac32} (1+  \log N  ) ( (\Vert x_2 \Vert + \ve )^2 + 1),
\end{align*}
so that Lemma \ref{lem:spectral} implies
\begin{align}\label{eq:approx_2}
\Vert G^{\ve}(x) \Vert \le dK  \lambda_{\min}^{-\frac32} (1+  \log N  ) ( (\Vert x_2 \Vert + \ve )^2 + 1).
\end{align}

Since
\begin{align*}
G_{\alpha,\beta}^{\ve}(x) - G_{\alpha,\beta}^{\ve}(y) &= \int_{\bR^d \times \bR^d} G_{\alpha,\beta}(z) ( j_{\ve}(x-z) - j_{\ve}(y-z)) \, dz \\
&=\int_{\bR^d \times \bR^d} (G_{\alpha,\beta}(z) - G_{\alpha,\beta}(x)) ( j_{\ve}(x-z) - j_{\ve}(y-z)) \, dz,
\end{align*}
we have
\begin{align*}
\partial_iG_{\alpha,\beta}^{\ve}(x) &= \int_{\bR^d \times \bR^d}  (G_{\alpha,\beta}(z) - G_{\alpha,\beta}(x)) (\partial_i j_{\ve}) (x-z) \, dz \\
&= \int_{B_d(x, \ve) \times B_d(x, \ve)} \Vert G(z) - G(x) \Vert_s \ve^{-2d-1}  \partial_i j ( \tfrac{x-z}{\ve})  \, dz. 
\end{align*}
An easy computation shows that $|\partial_i j(x) | \le 12 c^2$. Using this, $G \in \cG$, and \eqref{eq:c_lb} we obtain for all $\ve \in (0,1)$ the upper bound
\begin{align*}
|\partial_iG_{\alpha,\beta}^{\ve}(x)| &\le \int_{B_d(x, \ve) \times B_d(x, \ve)} \Vert G(z) - G(x) \Vert_s \ve^{-2d-1} \left| \partial_i j\left( \tfrac{x-z}{\ve}\right) \right| \, dz \\
&\le \ve^{-2d-1} \left( 2K \lambda_{\min}^{-\frac32} (1 + \log N) \ve (1 + \log \ve^{-1}) \cdot 12c^2 \cdot m(B_d(x,\ve))^2  \right) \\
&= 2K \lambda_{\min}^{-\frac32} (1 + \log N)  (1 + \log \ve^{-1}) \cdot 12c^2 \cdot m(B_d(0,1))^2  \\
&\le \log (\ve^{-1}) 48K \lambda_{\min}^{-\frac32}  \cdot 12 \cdot  4^d = \log (\ve^{-1}) 576K \lambda_{\min}^{-\frac32} \cdot  4^d.
\end{align*}
Hence, by Lemma \ref{lem:spectral},
\begin{align}\label{eq:approx_3}
\Vert \partial_iG^{\ve}(x) \Vert_s \le \log (\ve^{-1}) 576K \lambda_{\min}^{-\frac32}   d4^d, \hspace{0.5cm} 1 \le i \le 2d,
\end{align}
where $\partial_iG^{\ve}(x) = [\partial_iG^{\ve}_{\alpha,\beta}(x) ]_{\alpha,\beta}$.

We combine \eqref{eq:approx_1}-\eqref{eq:approx_3} with condition (C2) to obtain
\begin{align*}
&\left| \mu \left[  ( \bar{f}^{n})^T   G \Biggl( \sum_{|i - n| > k} \bar{f}^i, \bar{f}^{n,k}   \Biggl)  \bar{f}^{n,m}   \right] \right|   \\
&\le \left| \mu \left[  ( \bar{f}^{n})^T   G^{\ve} \Biggl( \sum_{|i - n| > k} \bar{f}^i, \bar{f}^{n,k}   \Biggl)  \bar{f}^{n,m}   \right] \right| +  8M^2 \Vert G^{(\ve)} - G \Vert_\infty \\
&\le C_2 \left(   \Vert G^{\ve} \Vert_{\infty} + \Vert \nabla G^{\ve} \Vert_{\infty}  \right)\rho(m) \\
&+  8M^2 d K \lambda_{\min}^{-\frac32} (1 + \log N ) 6 \ve \log \ve^{-1} \\
&\le C_2 dK  \lambda_{\min}^{-\frac32}  \Biggl[ \left(    (1+  \log N  ) ( 4M + 1 + \ve )^2 + 1) +  \log (\ve^{-1}) 576 \cdot 4^d \right) \rho(m) \\
&+  8M^2  (1 + \log N ) 6 \ve \log \ve^{-1}  \biggr] \\
&\le C_2 d K \lambda_{\min}^{-\frac32} (1 + \log N) ( 97M^2  + 576 \cdot 4^d )  \log(\ve^{-1}) ( \rho(m) + \ve).
\end{align*}
Choosing $\ve =\tfrac12 \tfrac{\rho(m)}{\rho(0)} < 1$ implies condition (C2'). The proof for condition (C3') is omitted as it is almost verbatim the same.






\newpage

\bigskip
\bigskip
\bibliography{Sunklodas}{}
\bibliographystyle{plainurl}


\vspace*{\fill}

\end{document}